\theoremstyle{plain}% default
\newtheorem{thm}{Theorem}[section]
\newtheorem*{thmspec}{Theorem 4.2}
\newtheorem*{thmspec2}{Corollary 4.4}
\newtheorem*{thmspec3}{Theorem 5.5}
\newtheorem*{thmspec4}{Theorem 6.6}
\newtheorem{lem}[thm]{Lemma}
\newtheorem{cor}[thm]{Corollary}
\newtheorem*{mquestion}{Motivating question}
\theoremstyle{definition}
\newtheorem{defn}[thm]{Definition}
\newtheorem{ques}{Question}%[section]
\theoremstyle{remark}
\newtheorem*{rem}{Remark}
\DeclareMathOperator{\rank}{rank}
\DeclareMathOperator{\ab}{ab}
\DeclareMathOperator{\torord}{TorOrd}
\title{\textbf{Finding non-trivial elements\\and splittings in groups}}
\author{Maurice Chiodo}% \\The University of Melbourne}
\date{}
\begin{document}

\maketitle

%\begin{center}
%\textbf{Abstract}
%\end{center}
\begin{abstract}
It is well known that the triviality problem for finitely presented groups is unsolvable; we ask the question of whether there exists a general procedure to produce a non-trivial element from a finite presentation of a non-trivial group. If not, then this would resolve an open problem by J. Wiegold: `Is every finitely generated perfect group the normal closure of one element?' We prove a weakened version of our question: there is no general procedure to pick a non-trivial generator from a finite presentation of a non-trivial group. We also show there is neither a general procedure to decompose a finite presentation of a non-trivial free product into two non-trivial finitely presented factors, nor one to construct an embedding from one finitely presented group into another in which it embeds. We apply our results to show that a construction by Stallings on splitting groups with more than one end can never be made algorithmic, nor can the process of splitting connect sums of non-simply connected closed 4-manifolds.
\end{abstract}

\let\thefootnote\relax\footnotetext{2010 \textit{AMS Classification:} 20F10 (03D80).}
\let\thefootnote\relax\footnotetext{\textit{Key words:} decision problems in groups, partial algorithms in groups, recursion theory.}
\let\thefootnote\relax\footnotetext{\textit{The author was supported by an} Australian Postgraduate Award.}

\section{Introduction}

Combinatorial group theory involves the study of groups given by presentations $G=\left \langle X|R \right \rangle$, where $X$ is a set of generators and $R$ a set of defining relations, both often taken to be finite. Yet despite this simplification many interesting algorithmic questions about such groups have been shown to be recursively unsolvable.  In particular there are groups $G$ given by finite presentations for which the word problem, of determining whether a word $w$ in the generators is trivial in $G$, is recursively unsolvable (\cite{Boone}). The isomorphism problem of deciding whether two presentations define isomorphic groups is also unsolvable. Even the triviality problem of deciding whether or not $G \cong \{1\}$ is unsolvable (\cite{Adian},\cite{Rabin}). 
%We begin by recalling that the word, isomorpism, and triviality problems are all unsolvable in the class of finitely-presented groups (\cite{Boone}, \cite{Adian} and \cite{Rabin}). 
However, if we consider certain classes of groups then these problems are sometimes more tractable. For example, in the class of finitely presented abelian groups, one can solve the word problem uniformly for all presentations, as well as the isomorphism problem. For an excellent survey on decision problems in group theory discussing these results and many others, see \cite{Mille-92}.

A good example to consider is the class of finitely presented non-trivial simple groups, which can be shown to have solvable word problem as follows: take a finite presentation $\left \langle X|R \right \rangle$=$\left \langle x_{1}, \ldots x_{n}|R \right \rangle$ of a non-trivial simple group and a fixed word $s$ representing a non-trivial element. Given an arbitrary word $w$ on $X$, begin an enumeration $w_{1}, w_{2}, \ldots$ of all trivial words in $\left \langle X|R \right \rangle$, and at the same time an enumeration $y_{1}, y_{2}, \ldots$ of all trivial words in $\left \langle X|R, w \right \rangle$%(a proof that such sets are recursively enumerable can be found in \cite{Mille-92} p.~5)
. Now look for $w$ in the first list, and $s$ in the second list. If $w$ is trivial then it will appear in the first list, if $w$ is non-trivial then $s$ will appear in the second list; it is clear that precisely one of these will occur as $\left \langle X|R \right \rangle$ defines a non-trivial simple group. Moreover, this algorithm can easily be made \emph{uniform} (that is, recursively constructible for each case) by searching for $w$ in the first list, and searching for all $x_{i}$ in the second list. If the first search halts then $w$ is trivial, if the second halts then $w$ is non-trivial; again, precisely one will halt.

Notice that our first algorithm works for each individual simple group because in any non-trivial group there exists a non-trivial element. But there is no reason to assume that the process of selecting a non-trivial element can be made uniform. The word problem mentions nothing about being able to recursively construct such an algorithm for the given presentation, only that one must exist. In fact there is no universal algorithm to solve the word problem on all groups with soluble word problem, nor is the class of finitely presented groups with soluble word problem recursively enumerable (see \cite{BooneRogers}).% So for the case of finitely presented simple groups, we check if we can somehow extract a non-trivial element from the presentation given, and hence save having to modify our algorithm to make it uniform. It turns out we can, but only because we already have a uniform solution to the word problem  in that class.

By taking a little step back, we see that we have come across an interesting problem: Is there a general method that, when given ANY non-trivial group, produces a single non-trivial element? We state this formally below; it will be the motivation behind all our further work in this paper.

%\textbf{Motivating question:} 
\begin{mquestion}
Is there a partial algorithm on finite presentations of groups that, on input of a finite presentation $P$ of a non-trivial group, outputs a word $w$ on the generators of $P$ such that $w$ is non-trivial in that group?
\end{mquestion}

One would presume a sensible place to start looking would be the finite generating set, naively hoping to sift out one of the generators as non-trivial (like panning for gold). We shall later show that this approach is impossible in general. Note that we are only asking for a partial algorithm, and are only concerned with its behaviour when given a non-trivial group. If such an algorithm was to be given a presentation of the trivial group, it may never halt, or my halt and output some word $w$ which would of course be trivial. Therefore such an algorithm can't be used in the `obvious' way to enumerate all finite presentations of non-trivial groups, a set known to be not recursively enumerable (\cite{Rogers}).% thereby not contradicting the fact that this class is not recursively enumerable.
% We  (notice that this prevents us using such an algorithm in a straightforward way to enumerate all presentations of non-trivial groups).

Our question is easily reduced to the recursive class of finitely presented perfect groups. For if our group $G$ is not perfect then we can recursively form the abelianisation and identify a non-trivial generator there, which is possible since the word problem is uniformly solvable for finitely presented abelian groups. Any preimage of this generator will therefore be non-trivial in $G$. So from hereon we shall usually assume that we have already recursively tested our group to be perfect. Notice that if $G$ happens to be presented with 2 generators, and is perfect and non-trivial, then neither generator can be trivial. So our question is fully resolved for non-trivial two generator groups.

As the world of mathematics is heavily intertwined, it is not surprising to find connections to existing open problems. One such example is the following open problem in group theory, posed by J. Wiegold as 5.52 in \cite{notebook}: `Is every finitely generated perfect group the normal closure of one element?' If the answer to this is yes, then we would have an algorithm that satisfies our motivating question as follows: With $G$ assumed to be perfect, we recursively enumerate all presentations with the same generating set as $G$ and precisely one extra relator (some $w_{i}$). Assuming Wiegold's question has a positive answer, at least one such presentation will describe the trivial group, which we can recursively search for and will eventually find. The corresponding $w_{i}$ is then non-trivial in $G$. We note here the author believes that the answer to Wiegold's question and to the motivating question is `no' in each case.

Looking back at our motivating question again, one may ask why a proof that the triviality problem is undecidable doesn't extend to show that there is no algorithm to extract a non-trivial element. The known proofs involve the Adian-Rabin theorem (\cite{Adian}, \cite{Rabin}). That is, there is a uniform procedure that, on input of a finite presentation $P$ of a group and a word $w$ on the generators, recursively constructs a finite presentation $P(w)$ such that this new group is trivial if and only if $w$ is trivial in $P$, and moreover whenever $w$ is non-trivial then $P$ embeds in $P(w)$. All this is done without prior knowledge of whether $w$ is trivial in $P$ or not. This reduces the triviality problem to the word problem (thus showing it is undecidable). However, by construction, the group $P(w)$ is known to be 2 generator and perfect, so as mentioned previously neither generator will be trivial. So we have such an algorithm for these $P(w)$ groups whenever they are non-trivial. But this does not suffice, as we want an algorithm for all finitely presented non-trivial groups.

We are quickly led to modify our main question, by replacing `non-trivial' with other group properties. For example, knowing our presentation defines a non-abelian group, can we extract two elements that do not commute? Or, knowing our presentation defines a group that is the free product of two non-trivial groups, can we re-write it as a free product with non-trivial factors? Similarly, knowing that a finite presentation $P$ defines a group that embeds into the group defined by another finite presentation $Q$, can we construct such an embedding?  It is known that the properties of being trivial, or abelian, or a free product, or a subgroup, are not algorithmically recognisable for finitely presented groups (\cite{Mille-92}). But is the knowledge that a group lies outside one such class sufficient to algorithmically demonstrate it? These particular types of decision problems are interesting as we are dancing very close to problems known to be algorithmically impossible, but hoping that our extra piece of algorithmically indeterminable information may be of some use. We resolve some of these types of questions with the following main results of this paper:

\begin{thmspec}\label{first main}Fix any $k>0$. Then there is no algorithm that, on input of a finite presentation $P=\left \langle X|R \right \rangle$ of a non-trivial group $\overline{P}$, outputs a word $w$ on $X$ of length at most $k$ such that $w$ is non-trivial in $\overline{P}$ (If $P$ is a group presentation, then $\overline{P}$ denotes the group presented by $P$).
\end{thmspec}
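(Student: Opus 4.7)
The approach I would take is a reduction from the triviality problem (unsolvable by Adian--Rabin). Assume, for contradiction, that such an algorithm $A_k$ exists. Fix a finitely presented group $\overline{Q}$ with unsolvable triviality, and recall the Adian--Rabin construction: uniformly in an input finite presentation $S$, it produces a finite presentation $P(S)$ with $\overline{P(S)}$ trivial iff $\overline{S}$ is trivial. Running $A_k$ directly on $P(S)$ does not suffice, since in the non-trivial branch $A_k$ may output a length-$\le k$ word (e.g.\ one of the two generators of the standard 2-generator Adian--Rabin presentation) that is always non-trivial when $\overline{P(S)}\ne 1$, and hence carries no information about $\overline{S}$.

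The central step, then, is a refinement of $P(S)$ to a finite presentation $\widetilde P(S)$ with the following properties, constructible uniformly from $S$: (a) $\overline{\widetilde P(S)}$ is trivial iff $\overline S$ is trivial; (b) when $\overline{\widetilde P(S)}$ is non-trivial, every reduced word on the generators of $\widetilde P(S)$ of length at most $k$ is either provably trivial from the defining relators by an effective, uniform derivation, or it belongs to a uniformly computable finite list $\mathcal L(S)$ of ``test witnesses,'' each of whose non-triviality in $\overline{\widetilde P(S)}$ is equivalent to non-triviality of $\overline S$. The construction I envisage adjoins to $P(S)$ many extra generators $y_1,\dots,y_M$, each tied by a relator $y_i = u_i$ to a long, carefully chosen word $u_i$ in the original generators (subject to a small-cancellation-type condition to block accidental short collapses), together with auxiliary relators that force every reduced length-$\le k$ combination of the generators to land either in the identity class or in $\mathcal L(S)$. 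Finiteness of the set of length-$\le k$ words in a fixed alphabet makes this a bounded combinatorial task for each fixed $S$; uniformity in $S$ is the nontrivial part.

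With $\widetilde P(S)$ in hand, I decide whether $\overline S$ is trivial by a parallel halting-race: thread~(T1) enumerates consequences of the relators of $\widetilde P(S)$, halting iff every generator is driven to the identity (equivalently, iff $\overline{\widetilde P(S)}$ is trivial); thread~(T2) runs $A_k$ on $\widetilde P(S)$ and, on halting with a word $u$, checks whether $u\in\mathcal L(S)$. By properties (a) and (b) exactly one of T1, T2 terminates, and its answer decides whether $\overline S$ is trivial. Since $\overline Q$ has unsolvable triviality problem, this contradicts our assumption.

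The main obstacle is, unsurprisingly, step~(b) of the construction of $\widetilde P(S)$: keeping the group essentially the same as $\overline{P(S)}$ while forcing all short words either to be trivial or to fall into the controllable pool $\mathcal L(S)$, and doing this uniformly in $S$. The bound $k$ is crucial here, since it makes $\mathcal L(S)$ finite and uniformly computable; without it, the analogous problem is (as noted in the introduction) open and related to Wiegold's conjecture, so any argument must essentially use the boundedness.
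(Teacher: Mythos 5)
Your reduction from the triviality problem fails at the final halting race, and the failure is not a repairable detail. The hypothesized algorithm $A_k$ is only a \emph{partial} algorithm whose behaviour on presentations of the trivial group is completely unconstrained: when $\overline S$ is trivial it may still halt, and it may output a word which, as a formal word, happens to lie in your list $\mathcal L(S)$ (every word is trivial in the trivial group, so this violates nothing in its specification). Hence both threads T1 and T2 can terminate, the claim that ``exactly one of T1, T2 terminates'' is unjustified, and answering according to whichever halts first can be wrong; waiting only for T1 merely re-proves that triviality is recursively enumerable. This is exactly the subtlety flagged in the paper's introduction: such a partial algorithm cannot be used in the obvious way to recognise non-triviality. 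Moreover, property (b) of $\widetilde P(S)$, which carries the whole weight of your argument, is left as a wish. As stated it is nearly vacuous (if (a) holds and the group is non-trivial, every non-trivial short word automatically has ``non-triviality equivalent to non-triviality of $\overline S$''), while the version you would actually need --- a uniform-in-$S$, effective classification of all length-$\le k$ words into provably trivial ones and controlled witnesses --- amounts to deciding triviality of short words in $\overline{\widetilde P(S)}$, and the small-cancellation sketch does not deliver it.

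The paper takes a genuinely different route that avoids reducing from the triviality problem. It first proves a recursion-theoretic lemma via the s-m-n and Kleene recursion theorems: given $W_n$ and $k+1$ integers not all in $W_n$, no partial recursive function can discard one of them so that the remaining ones are still not all in $W_n$ (Lemma 3.4); the recursion theorem supplies the diagonalization that your reduction was trying to get from Adian--Rabin. Using the presentations $\Pi_{m,n}$ (Boone composed with Gordon's Adian--Rabin construction, trivial iff $n\in W_m$), this becomes Lemma 4.1: no algorithm can, from $k+1$ finite presentations at least one of which presents a non-trivial group, remove one while keeping at least one non-trivial. Theorem 4.2 then follows by a pigeonhole argument: apply the assumed algorithm to the free product of the $k+1$ presentations; the output word has length at most $k$, so it omits the generators of some factor $P_i$, and deleting $P_i$ leaves a free product in which that word is still non-trivial. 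The bound $k$ enters only through this pigeonhole on free factors, not through finiteness of a witness list, and no control over the algorithm's behaviour on trivial inputs is ever needed.
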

Though this does not resolve our motivating question, it is still an enlightening result in its own right. We use it to deduce the following corollary.

\begin{thmspec2}\label{second main}Fix any $k>0$. Then there is no algorithm that, on input of a finite presentation $P=\left \langle X|R \right \rangle$ of a non-abelian group $\overline{P}$, outputs two words $w, w'$ on $X$, each of length at most $ k$, such that $[w, w']$ is non-trivial in $\overline{P}$.
\end{thmspec2}

The next two results are proved in full generality, without needing to weaken the questions. We use theorem \ref{split} later to show that a construction by Stallings on splitting groups with more than one end can never be made algorithmic, nor can the process of decomposing the connect sum of two non-simply connected closed 4-manifolds into non-simply connected summands.

\begin{thmspec3}\label{third main}There is no algorithm that, on input of a finite presentation $P=\left \langle X|R \right \rangle$ of a group that is a free product of two non-trivial finitely presented groups, outputs two finite presentations $P_{1}, P_{2}$ which present non-trivial groups and whose free product is isomorphic to $\overline{P}$.
\end{thmspec3}

\begin{thmspec4}\label{fourth main}
There is no algorithm that, on input of two finite presentations $P=\langle X|R\rangle$ and $Q=\langle Y |S \rangle$ such that $\overline{P}$ embeds in $\overline{Q}$, outputs an explicit map $\theta$ from $X$ to words in $Y$ such that $\theta$ extends to an embedding $\overline{\theta}: \overline{P} \hookrightarrow \overline{Q}$.
\end{thmspec4}

The main tool to be used in proving all the above results will be the following interesting application of a construction by Boone: % (Actually we use an adaptation found in \cite{Rot} of his original proof in \cite{Boone} that there exists a finitely presented group with insoluble word problem): 
there is an explicit recursive procedure that, on input of $m,n \in \mathbb{N}$, outputs a finite presentation $\Pi_{m,n}$ of a torsion free perfect group such that $\overline{\Pi}_{m,n} = \{1\} \textnormal{ if and only if }n \in W_{m}$, the $m^{th}$ recursively enumerable set. This is done without knowing \textit{a priori} whether $n$ lies in $W_{m}$ or not.

Using these $\Pi_{m,n}$ we are able to encode the following two ideas from recursion theory, providing the algorithmic complexity for our final group theoretic results. It helps to think of these as alternative recursion theoretic results to the standard fact that there is a recursively enumerable yet non-recursive set.

1. Given a recursively enumerable set $W_{n}$ and a finite set $F \nsubseteq W_{n}$, there is no general procedure to recursively find a proper subset $A\subset F$ such that $A \nsubseteq W_{n}$ (we use this to prove theorem \ref{unbounded word} and corollary \ref{ab bounded}).

2. Given a recursively enumerable set $W_{n}$ and a finite set $F$ such that $|F\cap W_{n}| \leq 1$, there is no general procedure to recursively find an element of $F$ not lying in $W_{n}$ (we use this to prove theorem \ref{split} and theorem \ref{no embeddings}).

We will eventually show, by taking various free products of some well-chosen $\Pi_{m,n}$, that the existence of any algorithm described in theorem \ref{unbounded word}, corollary \ref{ab bounded}, theorem \ref{split} or theorem \ref{no embeddings} would yield algorithms violating these recursion theory results.

\textbf{Notation:} We shall adopt the following notation and conventions for the remainder of this paper. When $P=\left\langle X|R \right\rangle$ is a (semi)group presentation, we denote by $\overline{P}$ the (semi)group presented by $P$. If Q is another finite presentation then we denote their free product presentation, given by taking the disjoint union of their generators and relators, by $P*Q$. If $\{x_{1}, \ldots, x_{n}\}$ is a finite set, then for any $1 \leq i \leq n$ we define $\{x_{1}, \ldots, \hat{x}_{i}, \ldots, x_{n}\} := \{x_{1}, \ldots, x_{n}\} \setminus \{x_{i}\} $. If $X$ is a set, then we denote by $X^{-1}$ a set of the same cardinality as $X$ (considered an `inverse' set to $X$), and $W(X)$ the finite words on $X \cup X^{-1}$, including the empty word. A word $w \in W(X)$ is said to be \emph{positive} if it contains no element of $X^{-1}$; denote the set of all such words by $\Omega(X)$. The \textit{length} $l(w)$ of a word $w \in W(X)$ is the number of symbols in $w$ before free reductions. For a finitely-generated group $G$ the \emph{Rank} of $G$, $\rank(G)$, is the minimal size of a generating set for $G$, and $G$ is \emph{indecomposable} if it is non-trivial yet cannot be written as the free product of two non-trivial groups. 

%\\
%\\\textbf{Acknowledgement:} The author wishes to thank Charles F. Miller III for his many conversations and valuable insight into this work.

\textbf{Acknowledgements:} The author wishes to thank Chuck Miller for his many conversations and valuable insight, Rod Downey for his guidance with recursion theory, and Andrew Glass for his time in considering many of these problems. Thanks also go to Lawrence Reeves and Jack Button for their help with proof reading.% and Jake Rasmussen for his careful explanation of Markov's geometric construction.

\section{Turing Machines and Group Constructions}

The following section is taken largely from \cite{Rot} chapter 12 which the reader may wish to familiarise himself with, we give a summary of the necessary ideas and results.%; most without proof.

There are various definitions of Turing machines and partial recursive functions, all of which can be shown to be equivalent. See \cite{Rogers} chapter 1 for an excellent introduction to recursive function theory, especially $\S 1.6$. We shall employ the definition found in \cite{Rot}, and shall adhere to the following notation.

\begin{defn}\label{re func}
Let $T$ be a Turing machine with alphabet $S$. The \emph{halting set} $e(T)$ of $T$ is the set of tapes that $T$ eventually halts on, which can be viewed as a subset of $\Omega(S)$. We adopt the notation $T(w) \downarrow$ whenever $T(w)$ halts ($w \in e(T)$), and $T(w) \uparrow$ whenever $T(w)$ doesn't halt ($w \notin e(T)$).
%\end{defn}
%\begin{defn}\label{re func}
We define the $m^{th}$ \emph{partial recursive function} $\varphi_{m}$ as follows: Take the $m^{th}$ Turing machine $T_{m}$ with alphabet $\{s_{0}, s_{1}\}$. Then $\varphi_{m}$ takes as input values in $\mathbb{N}$ via $\varphi_{m}(n):=T_{m}(s_{1}^{n+1}) $; $s_{1}^{n+1}$ being the tape with $n+1$ successive images of $s_{1}$ on it. The $m^{th}$ \emph{partial recursive set} $W_{m}$ is then defined as the domain of $\varphi_{m}$.% A partial recursive function $\varphi_{m}$ is \emph{recursive}
\end{defn}

We recall some important constructions and results in group theory by Post, Boone, Adian and Rabin.

\begin{thm}[Post, \cite{Rot} lemma $12.4$]\label{Post}
Let $T$ be a Turing machine with alphabet $S$. Then we can recursively construct a finite semigroup presentation denoted $\Gamma(T)$ such that, whenever $w \in \Omega(S)$, we have $hq_{1}wh=q$ in $\overline{\Gamma(T)}$ if and only if $T(w)$ halts (where $h,q,q_{1}$ are fixed generators from $\Gamma(T)$).
\end{thm}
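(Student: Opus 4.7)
The plan is the standard Post--Markov encoding of a Turing machine as a semigroup: I would take generators consisting of the tape alphabet $S$, a symbol $q_i$ for each internal state of $T$, the boundary marker $h$, and the distinguished symbol $q$. A configuration of $T$ with tape contents $\alpha s\beta$, head scanning $s$, and internal state $q_i$ is then represented by the word $h\alpha q_i s\beta h$, so that the initial configuration on input $w\in\Omega(S)$ is precisely $hq_1wh$. The state $q_1$ is fixed as the initial state of $T$.

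For the defining relations of $\Gamma(T)$ I would translate each instruction of $T$ into a local rewrite. A rightward transition ``in state $q_i$, reading $s_j$, write $s_l$, enter $q_k$, move right'' becomes $q_is_j=s_lq_k$; a leftward transition becomes $s_mq_is_j=q_ks_ms_l$ for every tape symbol $s_m$; and boundary relations involving $h$ permit the insertion of fresh blank symbols when the head would run off either end of the tape. Finally, for each halting state $q_f$ I would add a relation $q_f\to q$ together with the absorbing relations $sq=qs=q$ for every tape symbol $s$ and $hq=qh=q$, so that any word in which a halt state has been reached collapses to the single generator $q$.

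The forward implication is then immediate: if $T(w)\!\downarrow$, I simply apply the relation corresponding to each successive Turing-machine step, turning $hq_1wh$ into the word encoding the halting configuration, which then collapses to $q$ by the absorbing relations. The substantive content, and the main obstacle, is the converse: an equality $hq_1wh=q$ in $\overline{\Gamma(T)}$ need not \emph{a priori} come from applying the relations in the ``forwards'' computational direction -- relations can be applied either way and to subwords anywhere in the word. One must rule out spurious derivations that seem to equate $hq_1wh$ with $q$ without $T$ actually halting.

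The standard way to handle this, which I would adopt, is a normal-form / invariance argument. One shows by induction on the length of a derivation that every word equal to $hq_1wh$ and not containing $q$ is again a legal configuration word $h\alpha q_i s\beta h$, and that the only rewrites that apply to such a word are precisely those corresponding to the unique (deterministic) Turing step from $(q_i,s)$ and its inverse. Consequently the set of configuration words equal to $hq_1wh$ is exactly the forward-and-backward orbit of the $T$-computation on $w$, and the only way to introduce the symbol $q$ into the word is via a halting transition. Hence if $hq_1wh=q$ in $\overline{\Gamma(T)}$, a halting configuration must have appeared in some intermediate word, which forces $T(w)\!\downarrow$. Recursiveness of the construction is clear from the explicit finite list of generators and relations produced from the finite description of $T$.
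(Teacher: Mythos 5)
The paper itself gives no proof of this statement: it is quoted directly from Rotman (Lemma 12.4), going back to Post, so the only fair comparison is with that standard argument --- and your sketch is exactly that construction (configuration words $h\alpha q_i s\beta h$, one rewrite per machine instruction, boundary relations for blanks, collapse relations at the halting state), so in approach you are on target and the forward implication is fine as you say.

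One point in your converse argument is stated incorrectly and needs the standard repair. You claim that the only rewrites applicable to a configuration word are the unique forward Turing step ``and its inverse,'' and hence that the words equal to $hq_1wh$ form exactly the forward-and-backward orbit of the computation on $w$. That is false unless $T$ is reversible: determinism gives uniqueness of the \emph{forward} rewrite, but several distinct rules can apply \emph{backwards} to the same configuration word (any instruction whose right-hand side occurs there), so the equivalence class is not a single line but a tree of configurations whose forward computations eventually merge with that of $w$. The correct version of your induction is: along a given derivation witnessing $hq_1wh=q$, look at the first word containing $q$ (equivalently, a halting-state symbol); all earlier words are configuration words, and the symbol can only have been introduced by a forward application of a halting instruction, so some configuration word $C$ admitting a halting step lies in the same class as $hq_1wh$ under the machine relations alone. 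One then uses forward determinism (each configuration word has at most one forward successor, and $C$ is forward-terminal up to the cleanup relations) to conclude that the forward orbits of $hq_1wh$ and $C$ must actually meet at $C$, i.e.\ the computation of $T$ on $w$ reaches $C$ and halts. With that adjustment --- which is precisely the ``first reverse step / confluence of a deterministic rewriting system'' argument in Post's and Rotman's proofs --- your outline is correct; as written, the key lemma you induct on is not true and the ``orbit'' picture does not by itself justify the final inference.
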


Using this and the existence of a recursively enumerable but non-recursive set as given in \cite{Rogers} $\S 5.2$ theorem $VI$, Post (\cite{Post}) was able to show that there exists a finitely presented semigroup with insoluble word problem.

\begin{thm}[Boone, \cite{Rot} lemma $12.7$]\label{boone}
Let $T$ be a Turing machine with alphabet $S$. Then we can recursively construct a finite presentation of a group $B(T)=\left \langle X_{T}|R_{T} \right \rangle$, uniform in $T$. This has the property that, for any $w \in \Omega(S)$ we can form a word $\beta(w) \in W(X_{T})$, uniform in $w$, such that $\beta(w) =e$ in $\overline{B(T)}$ if and only if $T(w)$ halts.
\end{thm}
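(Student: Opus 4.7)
The plan is to build $B(T)$ on top of Post's semigroup $\Gamma(T)$ from Theorem \ref{Post} by adding enough ``stable letter'' generators and HNN-style relations to promote semigroup equalities to group equalities, while ensuring that no accidental group-theoretic cancellations create spurious equalities. Concretely, I would take the finite generating set of $\Gamma(T)$ (including the designated letters $h,q,q_1$) and adjoin two families of auxiliary generators, call them $r_i, s_j$ (one pair for each semigroup relation of $\Gamma(T)$) together with additional ``control'' letters such as $x, y, t, k$. The relators $R_T$ would consist of: (a) relations making each $r_i$ (resp.\ $s_j$) commute with or conjugate appropriate pieces of a semigroup relation $u_i = v_i$ of $\Gamma(T)$, so that applying a single Post rewrite to a word corresponds to conjugating by some $r_i$ or $s_j$; (b) a definition $\beta(w) := [hq_1wh,\, t]\cdot (\text{correction involving }k)$ or the more standard Boone word built from $x,y,t,k$, chosen so that $\beta(w) = e$ reduces, via the defining relations, exactly to the requirement that $hq_1wh$ can be transformed into $q$ using the $r_i, s_j$ letters. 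The construction is uniform in $T$ because Post's construction is uniform in $T$ and the extra generators/relators are built mechanically from the relations of $\Gamma(T)$.

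The forward direction (if $T(w)\downarrow$ then $\beta(w)=e$) is the easier half: by Theorem \ref{Post}, $T(w)\downarrow$ gives a finite chain of semigroup moves in $\Gamma(T)$ taking $hq_1wh$ to $q$, and each such move can be realised in $B(T)$ by a conjugation using the corresponding $r_i$ or $s_j$; stringing these together and using the relations involving $t, k$ lets us rewrite $\beta(w)$ to the identity. The hard direction (if $\beta(w)=e$ then $T(w)\downarrow$) is where essentially all the work lies and is the main obstacle. The strategy is to view $B(T)$ as an iterated sequence of HNN extensions, with the various auxiliary letters $x,y,t,k$ as stable letters over carefully chosen associated subgroups, and then apply Britton's Lemma: any equality $\beta(w)=e$ forces a pinch, which forces a subword lying in one of the associated subgroups, which in turn forces a ``pinch-free'' rewriting of $hq_1wh$ inside $\Gamma(T)$ equal to $q$. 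Iterating this analysis through each stable letter, one extracts an actual semigroup derivation $hq_1wh = q$ in $\Gamma(T)$, and then Theorem \ref{Post} gives $T(w)\downarrow$.

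The chief obstacle is therefore managing the Britton's Lemma bookkeeping: one must prove that the associated subgroups of the HNN extensions really are generated by the intended subsets (so that the lemma applies at each stage), and that the combinatorics of the pinches force a legal rewriting in $\Gamma(T)$ rather than some ``illegal'' shortcut that uses the extra generators to bypass the semigroup. This is precisely the content of the classical Boone/Britton analysis and is what makes the result non-trivial; since the paper cites it as Lemma 12.7 of \cite{Rot}, I would use that verbatim rather than reproduce the verification. What the present application needs from the construction is only the uniform recursiveness of $T \mapsto B(T)$ and of $w \mapsto \beta(w)$, together with the ``halting iff trivial'' equivalence; both fall out of the construction sketched above.
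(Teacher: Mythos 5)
Your sketch is essentially the classical Boone construction (Post's semigroup plus stable letters $r_i$ and the control letters, with Britton's Lemma driving the hard direction), which is exactly the content of Rotman's Lemma 12.7 that the paper itself simply cites without reproving; the paper only adds the observation that $B(T)$ is built from free groups by amalgamated products and HNN extensions, hence torsion free. So your proposal takes the same route as the paper, namely deferring the genuine work to the cited Boone--Britton argument, and the minor imprecisions in your description of the generators and of $\beta(w)$ are immaterial since you invoke the citation for the verification.
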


Though we do not provide a proof here, we note that $B(T)$ is built up of amalgamated products and HNN extensions, beginning with free groups. Since amalgamated products and HNN extensions preserve the property of being torsion free (see theorem  \ref{tor thm}), then $\overline{B(T)}$ must be torsion free.

%Thus it is torsion free, since free groups are and amalgamated products / HNN extensions preserve the property of being torsion free.

Using theorem \ref{boone} and the existence of a recursively enumerable but non-recursive set, Boone (\cite{Boone}) was able to show that there exists a finitely presented group with insoluble word problem. Most of our efforts in this paper will involve encoding somewhat more elaborate Turing machines into generalisations of the construction $B(T)$ to obtain our desired results. We will also make use of the following important result, known as the Adian-Rabin theorem (\cite{Adian},\cite{Rabin}).

\begin{thm}[Adian-Rabin]\label{rabin}
%Let $P=\left \langle X|R \right \rangle$ be a finite presentation of a group. %, and let $W(X)$ be the set of words on $X$.
%Then there are finite presentations $\{P(w)|w \in W(X)\}$, uniform in $w$ and recursively constructible, such that
There is a uniform construction that, for each finite presentation $P=\left \langle X|R \right \rangle$ of a group and each $w \in W(X)$, produces a finite presentation $P(w)$ of a group and an explicit homomorphism $\phi : \overline{P} \to \overline{P(w)}$ such that:
\\$1$. If $w \neq e$ in $\overline{P}$, then $\phi: \overline{P} \hookrightarrow \overline{P(w)}$ is an embedding.
\\$2$. If $w = e$ in $\overline{P}$, then $\overline{P(w)} = \{1\}$.
\end{thm}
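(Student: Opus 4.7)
The plan is to build $P(w)$ by a carefully chosen cascade of free products and HNN extensions starting from $\overline{P}$, using Britton's lemma to verify that the base group embeds at each stage whenever $w \neq e$, and engineering the construction so that a single degenerate relation (triggered by $w = e$) collapses every generator in turn.

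First, I would embed $\overline{P}$ into a slightly larger group in which there are ample auxiliary elements to work with. A natural first move is to form $G_0 := \overline{P} * F$, where $F$ is a free group of small rank on fresh generators $a, b, c, \ldots$; the natural inclusion $\overline{P} \hookrightarrow G_0$ is an embedding by the normal form theorem for free products.

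Next, I would adjoin stable letters $t_1, \ldots, t_N$ via a sequence of HNN extensions that identify carefully chosen pairs of finitely generated subgroups. Each associated subgroup is described by a tuple of words in the original generators and the auxiliary letters, with $w$ appearing in at least one entry. The key design principle is that for each HNN step, the associated subgroups should be free of the correct rank and the prescribed map between them should be an isomorphism precisely when $w \neq e$ in $\overline{P}$; this is typically guaranteed by the normal form theorem for free products, which ensures that words involving a non-trivial element of $\overline{P}$ and elements of $F$ alternately have the expected length. When this holds, Britton's lemma applies at each step and the base group embeds in the next extension, so iterating we obtain $\phi: \overline{P} \hookrightarrow \overline{P(w)}$. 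Conversely, when $w = e$ in $\overline{P}$, substituting the identity for $w$ collapses one of the associated subgroups in rank, which forces the corresponding stable letter to centralize or identify auxiliary generators. Chaining these extensions in a domino pattern propagates the collapse: each auxiliary letter is trivialised in turn, and the cumulative relations then identify every $x_i$ with a word in already-trivialised symbols, giving $\overline{P(w)} = \{1\}$.

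The main obstacle is the dual nature of the design. Each relation has to be simultaneously innocuous enough when $w \neq e$ for the standard combinatorial machinery (Britton, amalgamation normal form) to certify an embedding at every stage, and potent enough when $w = e$ for the resulting collapse to reach every original generator of $\overline{P}$ rather than merely the auxiliary ones; arranging the HNN extensions as a carefully linked chain is the delicate part. A secondary concern is uniformity: the entire sequence of free products, HNN extensions, associated subgroups and isomorphisms must be describable as a recursive function of $P$ and $w$, which poses no real difficulty since each construction step depends only on finite data extracted from the previous presentation.
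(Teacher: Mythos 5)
Your proposal is a strategy outline rather than a proof: every point where the actual mathematical content of the Adian--Rabin theorem lives is deferred with phrases like ``carefully chosen pairs of finitely generated subgroups'', ``engineered so that\dots'', and your own admission that ``arranging the HNN extensions as a carefully linked chain is the delicate part''. No specific presentation, associated subgroups, or relations are ever written down, so neither direction can actually be checked. In the embedding direction you assert that normal-form arguments ``typically'' make the prescribed maps isomorphisms of free subgroups precisely when $w \neq e$, but with no concrete subgroups specified, Britton's lemma has nothing to act on. In the collapsing direction the claimed mechanism is not a valid inference in general: if $w=e$ degrades the rank of one associated subgroup, a relation of the form $t^{-1}ut=v$ only forces the corresponding image element to die; it does not by itself trivialise the stable letter, let alone propagate triviality to every generator $x_i$ of $P$. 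Getting the collapse to reach all of $X$ requires relations designed for exactly that purpose, and producing them is the theorem.

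For comparison, the paper does not reprove the theorem either; it cites Gordon's construction and records the explicit presentation: $P(w)$ has generators $X \cup \{a,b,c\}$ and, besides $R$, the relators $a^{-1}ba = c^{-1}b^{-1}cbc$, $a^{-2}b^{-1}aba^{2} = c^{-2}b^{-1}cbc^{2}$, $a^{-3}[w,b]a^{3} = c^{-3}bc^{3}$, and $a^{-(3+i)}x_{i}ba^{(3+i)} = c^{-(3+i)}bc^{(3+i)}$ for each $x_i \in X$. There one sees concretely how the two requirements are met: when $w=e$ the third relation kills $b$, which in turn kills $c$ and $a$ via the first two relations, and then the last family kills every $x_i$; when $w \neq e$ the group is analysed as a tower of amalgamated products and HNN extensions over $P$ (this explicit structure is also what the paper later needs for perfectness, rank $2$, and torsion-freeness). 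Your sketch gestures at the same kind of construction but omits it, so as it stands there is a genuine gap: the proposal would only become a proof once a specific chain of extensions with verified associated-subgroup isomorphisms and a verified collapse cascade is exhibited, which is essentially reconstructing Gordon's (or Rabin's) argument.
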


Though we do not provide a full proof here, we explicitly state the presentation used. Note that this is neither Adian's nor Rabin's original construction, but a later version due to Gordon (\cite{Gor}). From $P=\left \langle X|R \right \rangle$ we construct the presentation $P(w)$ with generating set $X \cup \{a,b,c\}$, and relators $R$ along with:
\begin{align*}
a^{-1}ba\ =\  & c^{-1}b^{-1}cbc
\\ a^{-2}b^{-1}aba^{2}\ =\  & c^{-2}b^{-1}cbc^{2}
\\a^{-3}[w,b]a^{3}\ =\  & c^{-3}bc^{3}
\\a^{-(3+i)}x_{i}ba^{(3+i)}\ =\  & c^{-(3+i)}bc^{(3+i)} \ \forall x_{i} \in X
\end{align*}
We also note that in this proof $P(w)$ is built up from amalgamated products and HNN extensions, beginning with the presentation $P$. Thus, whenever $w$ is non-trivial, $\overline{P(w)}$ has torsion if and only if $\overline{P}$ has torsion (see theorem  \ref{tor thm}). From this presentation we can immediately observe the following corollary.

\begin{cor}\label{rabin lem}
Using the notation above, if $\overline{P(w)}$ is non-trivial then it is perfect and has rank precisely $2$, generated by $b$ and $ca^{-1}$.
\end{cor}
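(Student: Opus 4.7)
The plan is twofold: show the abelianization of $P(w)$ is trivial (giving perfectness), and then show that $H := \langle b, ca^{-1}\rangle$ is all of $\overline{P(w)}$. The rank-$2$ lower bound will follow because a non-trivial perfect group cannot be cyclic.

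For perfectness, I would abelianize all the relators additively. The commutator $[w,b]$ vanishes in the abelianization, so relator (3) collapses to $b = 0$; combined with $b = c$ (from relator 1) and $a = c$ (from relator 2) this kills $a$, $b$, $c$, and then the fourth family kills each $x_i$. So the abelianization is trivial, which means $\overline{P(w)}$ is perfect.

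For the rank upper bound, write $d := ca^{-1}$, so that $c = da$, and substitute into the first relator $a^{-1}ba = c^{-1}b^{-1}cbc$. After cancelling the outermost $a^{\pm 1}$'s one obtains an equation of the form $b = (d^{-1}b^{-1}d)\,a\,(bd)$, which I can solve for $a$ as an explicit word in $b$ and $d$, placing $a \in H$; hence also $c = da \in H$. The fourth family of relators rearranges to $x_i = a^{3+i}\,c^{-(3+i)} b\, c^{3+i}\, a^{-(3+i)} b^{-1}$, expressing each $x_i$ as a word in $a$, $b$, $c$ and so placing every remaining generator in $H$. Thus $\overline{P(w)} = H$, giving rank at most $2$. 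If $\overline{P(w)}$ were non-trivial and cyclic, its abelianization would be non-trivial, contradicting perfectness; so the rank is exactly $2$, generated by $b$ and $ca^{-1}$. The only step with any real content is isolating $a$ from relator (1) after the $d$-substitution — the other relators are then used as explicit formulas, and relator (3) plays its role only in the abelianization calculation.
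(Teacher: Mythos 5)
Your proof is correct and is exactly the verification the paper treats as an immediate observation from Gordon's presentation: abelianizing the relators kills $a,b,c$ and every $x_i$ (so the group is perfect), while relator (1) rewritten in terms of $d=ca^{-1}$ expresses $a$ as a word in $b,d$ and the fourth family then expresses each $x_i$ in $a,b,c$, so $b$ and $ca^{-1}$ generate; perfectness rules out rank $1$. No gaps, and no genuinely different route from what the paper intends.
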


We combine the above results in the following way, as pointed out to the author by Chuck Miller: Given any pair $m,n \in \mathbb{N}$, take the $m^{th}$ Turing machine $T_{m}$ with alphabet $\{s_{0}, s_{1}\}$. Now use Boone's construction to form the group presentation $B(T_{m})$ and word $\beta(s_{1}^{n+1})$. Input this presentation/word pair into Gordon's construction for the Adian-Rabin theorem to form the presentation $(B(T_{m}))(\beta(s_{1}^{n+1}))$, which we shall denote as $\Pi_{m,n}$. Then by the properties of the Boone and Gordon constructions, we have that $n \in W_{m}$ if and only if $\beta(s^{n+1})= e$ in $\overline{B(T_{m})}$ (theorem \ref{boone}), if and only if $\overline{\Pi}_{m,n} = \{1\}$ (theorem \ref{rabin}). From the comments after theorems \ref{boone} and \ref{rabin}, we observe that if $\overline{\Pi}_{m,n}$ is non-trivial then it must be torsion free. So we have just proven the following:

\begin{thm}\label{chuck result}
There is an explicit recursive procedure that, on input of $m,n \in \mathbb{N}$, outputs a finite presentation $\Pi_{m,n}$ of a torsion free, perfect group such that $\overline{\Pi}_{m,n} = \{1\}$ if and only if $n \in W_{m}$.
\end{thm}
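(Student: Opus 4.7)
The plan is to compose Boone's group-theoretic encoding of Turing machines (Theorem \ref{boone}) with the Adian--Rabin--Gordon construction (Theorem \ref{rabin}), and then read off the three claimed properties (triviality iff $n \in W_m$, perfect, torsion free) from the stated outputs of those two constructions together with Corollary \ref{rabin lem}. The author has essentially announced this proof just before the statement, so the write-up amounts to organising the bookkeeping.

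First, given $m,n \in \mathbb{N}$, take the $m$th Turing machine $T_m$ on alphabet $\{s_0,s_1\}$ and apply Theorem \ref{boone} to obtain a finite presentation $B(T_m) = \langle X_{T_m} \mid R_{T_m}\rangle$ and a word $\beta(s_1^{n+1}) \in W(X_{T_m})$, both recursive in $m$ and $n$, with the property that
\[
\beta(s_1^{n+1}) = e \text{ in } \overline{B(T_m)} \iff T_m(s_1^{n+1})\downarrow \iff n \in W_m,
\]
the last equivalence coming from Definition \ref{re func}. Next, feed the pair $\bigl(B(T_m),\beta(s_1^{n+1})\bigr)$ into Gordon's construction from Theorem \ref{rabin} to produce
\[
\Pi_{m,n} \ :=\ \bigl(B(T_m)\bigr)\bigl(\beta(s_1^{n+1})\bigr).
\]
Since both Boone's and Gordon's constructions are uniform, the assignment $(m,n)\mapsto \Pi_{m,n}$ is a total recursive procedure on finite presentations.

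It remains to check the three properties. Triviality: Theorem \ref{rabin}(2) gives $\overline{\Pi}_{m,n} = \{1\}$ whenever $\beta(s_1^{n+1})=e$ in $\overline{B(T_m)}$, and Theorem \ref{rabin}(1) forces $\overline{\Pi}_{m,n}$ to contain a copy of $\overline{B(T_m)}$ (hence be non-trivial) otherwise; combined with the Boone equivalence above, this yields $\overline{\Pi}_{m,n}=\{1\}$ iff $n\in W_m$. Perfectness: if $n \in W_m$ then $\overline{\Pi}_{m,n}$ is trivial and so vacuously perfect; if $n \notin W_m$ then Corollary \ref{rabin lem} shows $\overline{\Pi}_{m,n}$ is perfect (and of rank $2$). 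Torsion-freeness: the trivial group is torsion free, and when $n \notin W_m$ the group $\overline{\Pi}_{m,n}$ is assembled from $\overline{B(T_m)}$ by the iterated amalgamated products and HNN extensions spelled out in Gordon's relators, so by the preservation result cited after Theorem \ref{rabin} (Theorem \ref{tor thm}) it is torsion free iff $\overline{B(T_m)}$ is; but $\overline{B(T_m)}$ is built from free groups by the same kinds of constructions (remark after Theorem \ref{boone}) and is therefore torsion free.

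There is no genuine obstacle here: the theorem is essentially a composition of two black boxes, and the mild subtlety is simply tracking which property (triviality, perfectness, torsion-freeness) is furnished by which ingredient. The conceptual content — and the reason this packaged statement is useful later — is that a single recursive family $\Pi_{m,n}$ simultaneously carries a full undecidable encoding of the halting problem \emph{and} confines every non-trivial member to the class of $2$-generated torsion-free perfect groups, which is exactly what the later arguments (Theorems \ref{unbounded word}, \ref{split}, \ref{no embeddings} and Corollary \ref{ab bounded}) will need to exploit.
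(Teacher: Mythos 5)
Your proposal is correct and follows exactly the paper's own argument: the author proves Theorem \ref{chuck result} by composing Boone's construction with Gordon's version of Adian--Rabin, reading triviality iff $n \in W_m$ from Theorems \ref{boone} and \ref{rabin}, perfectness from Corollary \ref{rabin lem}, and torsion-freeness from the remarks (via Theorem \ref{tor thm}) that both constructions build the group from free groups, respectively from $\overline{B(T_m)}$, by amalgamated products and HNN extensions. Nothing differs beyond bookkeeping, so there is nothing further to add.
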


\section{Tools from recursion theory and decidability}

Recalling \emph{Cantor's pairing function} $\left\langle . , .\right\rangle : \mathbb{N}\times\mathbb{N} \to \mathbb{N}$, $\left\langle x,y\right\rangle := \frac{1}{2}(x+y)(x+y+1)+y$ which is a bijection from $\mathbb{N}\times\mathbb{N}$ to $\mathbb{N}$, one can extend this inductively to define a bijection from $\mathbb{N}^{n}$ to $\mathbb{N}$ by $\left\langle x_{1},  \ldots, x_{n} \right\rangle := \left\langle \left\langle x_{1}, \ldots, x_{n-1} \right\rangle ,x_{n} \right\rangle$. We note that this function, and all its extensions, are recursively computable. The following two results can be found in \cite{Rogers} as $\S$1.8 theorem $V$, and $\S$11.2 theorem $I$.

\begin{thm}[The s-m-n theorem, or substitution theorem]\label{smn}For all $m, n \in \mathbb{N}$, a partial function $f:\mathbb{N}^{m+n} \to \mathbb{N}$ is partial-recursive if and only if there is a recursive function $s:\mathbb{N}^{m} \to \mathbb{N}$ such that, for all $e_{1}, \ldots, e_{m}, x_{1}, \ldots, x_{n} \in \mathbb{N}$ we have that
\\$f(e_{1}, \ldots, e_{m}, x_{1}, \ldots, x_{n})= \varphi_{s(e_{1}, \ldots, e_{m})}(\left\langle x_{1}, \ldots, x_{n}\right\rangle )$.
\end{thm}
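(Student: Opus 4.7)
The plan is to prove the two directions of the biconditional separately, noting that the substantive content is the forward implication (the classical s-m-n theorem), while the reverse is a straightforward closure property of the partial recursive functions.

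For the easier reverse direction, I would assume a total recursive $s$ is given and exhibit $f$ as a composition of partial recursive functions: apply $s$ to $(e_1, \ldots, e_m)$, apply Cantor's pairing function to $(x_1, \ldots, x_n)$, and then apply a fixed universal partial recursive function $U$ satisfying $U(e, y) = \varphi_e(y)$. Since $s$, $\langle \cdot, \ldots, \cdot \rangle$, and $U$ are all (partial) recursive, and partial recursive functions are closed under composition and projection, the resulting $f$ is partial recursive.

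For the forward direction, I would start from a Turing machine $T_f$ witnessing that $f$ is partial recursive on the suitably encoded tuple $(e_1, \ldots, e_m, x_1, \ldots, x_n)$. The idea is currying: given the tuple $(e_1, \ldots, e_m)$, I want to effectively build a new Turing machine $T_{e_1, \ldots, e_m}$ which, on a tape encoding the single natural number $\langle x_1, \ldots, x_n \rangle$, first uses a fixed decoding subroutine to recover $x_1, \ldots, x_n$ from $\langle x_1, \ldots, x_n \rangle$ (inverting Cantor's pairing, which is itself recursive), then writes the hard-coded values $e_1, \ldots, e_m$ to the left of $x_1, \ldots, x_n$ on the tape, and finally transfers control to a copy of $T_f$. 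The hard-coding of $e_1, \ldots, e_m$ is done at the level of the finite state control: the initial block of states of $T_{e_1, \ldots, e_m}$ is a ``preamble'' whose sole job is to print the fixed tape symbols representing $e_1, \ldots, e_m$ before handing off to $T_f$'s state diagram.

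I would then define $s(e_1, \ldots, e_m)$ to be the G\"odel index of $T_{e_1, \ldots, e_m}$ under the fixed enumeration of Turing machines used to define the $\varphi_k$. By construction, $\varphi_{s(e_1, \ldots, e_m)}(\langle x_1, \ldots, x_n \rangle) = f(e_1, \ldots, e_m, x_1, \ldots, x_n)$. The remaining task, and the main technical obstacle, is to verify that $s$ is in fact total recursive: this amounts to checking that appending a preamble writing $e_1, \ldots, e_m$ to a fixed machine $T_f$ is an effective transformation on indices. This is essentially bookkeeping relative to the chosen enumeration of Turing machines, but the writing of $e_1, \ldots, e_m$ into the preamble requires us to compute, uniformly in $(e_1, \ldots, e_m)$, a block of states whose size and transitions grow recursively in these inputs; verifying this uniformity carefully is where essentially all the work lies, but it follows from the standard primitive recursive construction of a machine that prints a given input on its tape.
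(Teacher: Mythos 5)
This theorem is not proved in the paper at all: it is quoted as a standard background result from Rogers (\S 1.8, Theorem V), so there is no internal argument to compare yours against. Your sketch is the standard textbook proof and is correct in outline: the converse direction follows by composing the recursive $s$, Cantor's pairing function, and a universal partial recursive function, and the forward direction is the usual currying argument, hard-coding $e_1,\ldots,e_m$ into a preamble of a machine for $f$ and checking that the induced index map $s$ is total recursive because the transformation on machine descriptions is effective. Two minor points to make explicit if you write this out in full: the converse direction silently relies on the enumeration (universal machine) theorem, which should be cited, and the forward direction should fix the input conventions for multi-argument partial recursive functions over the alphabet $\{s_0,s_1\}$ used in the paper's definition of $\varphi_m$, since your preamble must re-encode the decoded tuple $(e_1,\ldots,e_m,x_1,\ldots,x_n)$ in whatever format the machine for $f$ expects; both are routine bookkeeping.
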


\begin{thm}[Kleene recursion theorem]\label{Kleene}Let $f:\mathbb{N}\to \mathbb{N}$ be a recursive function. Then there exists $n \in \mathbb{N}$ with $\varphi_{n}=\varphi_{f(n)}$.
\end{thm}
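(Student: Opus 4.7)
The plan is to diagonalize using the s-m-n theorem (Theorem \ref{smn}). First, I would introduce the partial function $h: \mathbb{N}^2 \to \mathbb{N}$ defined by $h(x,y) := \varphi_{\varphi_x(x)}(y)$; that is, on input $(x,y)$ one first computes $\varphi_x(x)$ and, if this converges, uses the output as a program index and runs that program on $y$. This $h$ is partial recursive by composition with the universal partial recursive function. Applying the s-m-n theorem with $m=n=1$ then yields a total recursive function $s: \mathbb{N} \to \mathbb{N}$ such that
\[ \varphi_{s(x)}(y) \;=\; h(x,y) \;=\; \varphi_{\varphi_x(x)}(y) \]
for all $x, y \in \mathbb{N}$ (in the sense that the left side converges exactly when the right does, to the same value).

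Next, the composition $f \circ s$ is recursive, since both $f$ and $s$ are total recursive, so it equals $\varphi_v$ for some index $v \in \mathbb{N}$; in particular $\varphi_v(v) = f(s(v))$, which converges since $f$ and $s$ are both total. Setting $n := s(v)$, I then read off
\[ \varphi_n(y) \;=\; \varphi_{s(v)}(y) \;=\; \varphi_{\varphi_v(v)}(y) \;=\; \varphi_{f(s(v))}(y) \;=\; \varphi_{f(n)}(y) \]
for every $y \in \mathbb{N}$, which gives $\varphi_n = \varphi_{f(n)}$ as required.

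The substantive insight is the self-application diagonalization $x \mapsto \varphi_x(x)$ inside the index position, packaged via s-m-n as a single total recursive index function $s$; composing with $f$ and then diagonalising again at the index $v$ of $f \circ s$ forces the fixed point to appear. Totality of $f$ is what ensures $\varphi_v(v)$ converges, so the s-m-n identity is actually applicable at the crucial evaluation point. Beyond isolating this diagonalization, there is no genuine technical obstacle; the proof is purely a manipulation of indices using the universal machine and the substitution theorem.
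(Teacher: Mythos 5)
Your proof is correct: it is the standard fixed-point diagonalization (define $h(x,y)=\varphi_{\varphi_x(x)}(y)$, apply s-m-n to get total $s$, take an index $v$ for $f\circ s$, and set $n=s(v)$), and the totality of $f$ is used exactly where needed to guarantee $\varphi_v(v)\downarrow$. The paper itself gives no proof but cites Rogers (\S 11.2, Theorem I), whose argument is essentially the same as yours, so there is nothing further to reconcile.
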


The following two lemmata were inspired by a correspondence between the author and Rod Downey, and will form the recursion theoretic basis of our main results in group theory.

\begin{lem}\label{arb length}
Fix any $k>0$. Then there is no partial recursive function $g: \mathbb{N}^{k+2} \to \mathbb{N}$ such that, given $n, x_{0}, \ldots, x_{k} \in \mathbb{N}$ satisfying $\{ x_{0}, \ldots, x_{k}\} \nsubseteq  W_{n}$, we have that $g(n, x_{0}, \ldots, x_{k})$ halts with output  $x_{i} \in \{x_{0}, \ldots, x_{k}\}$ such that $\{x_{0}, \ldots, \hat{x_{i}}, \ldots x_{k} \} \nsubseteq W_{n}$.
\\That is, given a recursively enumerable set $W_{n}$ and a finite set $F \nsubseteq W_{n}$, we can't, in general, recursively find a proper subset $A\subset F$ such that $A \nsubseteq W_{n}$.
\end{lem}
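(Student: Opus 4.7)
The plan is to argue by contradiction using the Kleene recursion theorem, building a recursively enumerable set whose content is self-referentially defined by the alleged function $g$ itself. Assume such a partial recursive $g$ exists; I will fix the particular test inputs $x_i := i$ for $i = 0, \ldots, k$, reducing the question to an existence statement about a single r.e.~index.

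First, using the s-m-n theorem (Theorem \ref{smn}), I would construct a total recursive function $f : \mathbb{N} \to \mathbb{N}$ such that $\varphi_{f(n)}$ behaves as follows on input $y$: it first runs $g(n, 0, 1, \ldots, k)$; if this halts with output some $j$, then $\varphi_{f(n)}(y)$ halts precisely when $y \in \{0, 1, \ldots, k\} \setminus \{j\}$ (and diverges otherwise); if $g(n, 0, 1, \ldots, k)$ never halts, then $\varphi_{f(n)}(y)$ diverges for every $y$. Consequently
\[
W_{f(n)} \;=\; \begin{cases} \{0,1,\ldots,k\} \setminus \{j\} & \text{if } g(n,0,\ldots,k)\downarrow j \in \{0,\ldots,k\}, \\ \{0,1,\ldots,k\} & \text{if } g(n,0,\ldots,k)\downarrow j \notin \{0,\ldots,k\}, \\ \emptyset & \text{if } g(n,0,\ldots,k)\uparrow. \end{cases}
\]

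Next I would apply Kleene's recursion theorem (Theorem \ref{Kleene}) to $f$, obtaining an index $n^{*}$ with $\varphi_{n^{*}} = \varphi_{f(n^{*})}$, and hence $W_{n^{*}} = W_{f(n^{*})}$. Now I would perform a case split on the behaviour of $g$ on input $(n^{*}, 0, 1, \ldots, k)$. If $g(n^{*},0,\ldots,k)$ diverges, then $W_{n^{*}} = \emptyset$, so $\{0,\ldots,k\} \nsubseteq W_{n^{*}}$; the hypothesis of $g$ is satisfied, so $g$ must halt — a contradiction. If it halts with output $j \notin \{0,\ldots,k\}$, then $g$ violates its defining specification (which requires the output to be one of the $x_i$). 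Finally, if it halts with output $j \in \{0,\ldots,k\}$, then by construction $W_{n^{*}} = \{0,\ldots,k\} \setminus \{j\}$; the hypothesis $\{0,\ldots,k\} \nsubseteq W_{n^{*}}$ holds (since $j$ is omitted), but then $\{0,\ldots,\hat{x}_j,\ldots,k\} = W_{n^{*}}$, contradicting the required output condition that $\{x_0,\ldots,\hat{x}_j,\ldots,x_k\} \nsubseteq W_{n^{*}}$.

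The conceptual core of the argument — and the only delicate step — is engineering the fixed point so that $W_{n^{*}}$ is literally the complement (within $\{0,\ldots,k\}$) of the element $g$ chooses to remove. The s-m-n plus Kleene combination handles the self-reference; everything else is a routine case analysis. All three cases produce contradictions, so no such $g$ can exist.
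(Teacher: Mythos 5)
Your overall route is the same as the paper's: use the s-m-n theorem to build a function whose $W$-set is self-referentially determined by $g$'s own output, take a Kleene fixed point $n^{*}$, and observe that $W_{n^{*}}$ ends up being exactly $\{0,\ldots,k\}\setminus\{j\}$, contradicting the guarantee on $g$'s output. Your first and third cases are handled correctly and match the paper's argument. However, your second case contains a genuine error. If $g(n^{*},0,\ldots,k)$ halts with output $j \notin \{0,\ldots,k\}$, then by \emph{your} construction $W_{n^{*}} = \{0,\ldots,k\}$, so the hypothesis $\{0,\ldots,k\} \nsubseteq W_{n^{*}}$ is \emph{false} at this input. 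The lemma only constrains $g$ on inputs satisfying that hypothesis; on all other inputs $g$ may halt with arbitrary output. Hence an out-of-range output here does not ``violate its defining specification'' and produces no contradiction: a legitimate $g$ could behave exactly this way at the fixed point, and your argument would not detect any inconsistency. As written, the proof is therefore incomplete.

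The repair is small and is precisely how the paper sets things up: make $\varphi_{f(n)}$ diverge everywhere unless $g(n,0,\ldots,k)$ halts with output $j$ lying in $\{0,\ldots,k\}$ (the paper's $f(n,m)$ halts only when $g(n,0,\ldots,k)=j\in\{0,\ldots,k\}$ and $m\in\{0,\ldots,\hat{j},\ldots,k\}$). With that choice, a garbage output forces $W_{n^{*}}=\emptyset$, so the hypothesis $\{0,\ldots,k\}\nsubseteq W_{n^{*}}$ still holds and the out-of-range output now genuinely contradicts the specification, which demands an output among $x_{0},\ldots,x_{k}$. Alternatively, you could first replace $g$ by the partial recursive function that simulates $g$ and diverges whenever the output is not among its last $k+1$ arguments; this normalized function still satisfies the specification, and your case split then has only two cases, both of which you already handle correctly.
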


\begin{proof}
Assume such a $g$ exists. Define $f: \mathbb{N}\times \mathbb{N} \to \mathbb{N}$ by 
\[ f(n,m):= \left \{
 \begin{array}{l} 
    0 \textnormal{ if } g(n,0, \ldots, k)=j \in \{0, \ldots, k\}  \textnormal{ and } m\in \{0, \ldots, \hat{j}, \ldots, k\} \\
    \uparrow \textnormal{ in all other cases} \\
 \end{array}
\right.
\]
Then $f$ is partial-recursive, since $g$ is.
By theorem \ref{smn}, there exists a recursive function $s: \mathbb{N} \to \mathbb{N}$ such that $f(n,m)=\varphi_{s(n)}(m)$ for all $m, n$.
Since $s$ is recursive, theorem \ref{Kleene} shows that there must be some $n'$ such that $\varphi_{s(n')}=\varphi_{n'}$. Thus $f(n', m) = \varphi_{n'}(m) $ for all $m \in \mathbb{N}$.
Moreover, by definition, $\varphi_{n'}(m)$ can halt on at most $k$ of the $k+1$ cases $m=0, \ldots, m=k$ (if at all), and no other values.
Thus $|W_{n'}| \leq k$ since $W_{n'}$ is precisely the halting set of $\varphi_{n'}$.
So we must have that at least one of $\{0, \ldots, k\}$ does not lie in $W_{n'}$.
Thus $g(n', 0 , \ldots, k)$ will halt and output some $j$ in $\{0, \ldots, k\}$ such that  $\{0, \ldots, \hat{j},\ldots, k\} \nsubseteq  W_{n'}$ (by construction of $g$). % (ie: $(n', 0, \ldots, k)$ is a 'valid' entry for $g$).
But since $g(n', 0 , \ldots, k)$ halts and outputs $j$ in $\{0, \ldots, k\}$, then $f(n',m)$ will halt for all $m \in \{0, \ldots, \hat{j}, \ldots, k\}$ (by construction of $f$). Hence $\varphi_{n'}(m)$ will halt for all $m \in \{0, \ldots, \hat{j}, \ldots, k\}$ (by definition of $\varphi_{n'}$), and so $\{0, \ldots, \hat{j}, \ldots, k\} \subseteq  W_{n'}$ since $W_{n'}$ is precisely the halting set of $\varphi_{n'}$. Thus we have a contradiction, as we showed $\{0, \ldots, \hat{j},\ldots, k\} \nsubseteq  W_{n'}$, so no such $g$ can exist.
\end{proof}

\begin{lem}\label{two groups recursion}
Fix any $k>0$. Then there is no partial recursive function $g:\mathbb{N}^{k+2} \to \mathbb{N}$ such that, given $n, x_{0}, \ldots, x_{k} \in \mathbb{N}$ satisfying $|\{x_{0}, \ldots, x_{k}\} \cap W_{n}| \leq 1$, we have that $g(n,x_{0}, \ldots, x_{k})$ halts with output $x_{i} \in \{x_{0}, \ldots, x_{k}\}$ such that $x_{i} \notin  W_{n}$.
\\That is, given a recursively enumerable set $W_{n}$ and a finite set $F$ such that $|F\cap W_{n}| \leq 1$, we can't, in general, recursively find an element of $F$ not lying in $W_{n}$.
\end{lem}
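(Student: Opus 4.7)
The plan is to mirror the diagonalization used for Lemma \ref{arb length}, but to redesign the auxiliary partial recursive function so that its halting set satisfies the new $|F \cap W_{n}| \le 1$ hypothesis rather than the $F \nsubseteq W_{n}$ hypothesis. Assume for contradiction such a $g$ exists. I would first construct a partial recursive $f:\mathbb{N}^{2}\to\mathbb{N}$ defined by
\[
f(n,m):=\begin{cases} 0 & \text{if } g(n,0,1,\ldots,k)\text{ halts with output }j\in\{0,\ldots,k\}\text{ and }m=j, \\ \uparrow & \text{otherwise.} \end{cases}
\]
This is partial recursive because $g$ is, and, crucially, for each fixed $n$ the section $f(n,\cdot)$ halts on at most one element of $\{0,1,\ldots,k\}$.

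Next I would apply the s-m-n theorem (Theorem \ref{smn}) to obtain a recursive $s:\mathbb{N}\to\mathbb{N}$ with $f(n,m)=\varphi_{s(n)}(m)$, and then the Kleene recursion theorem (Theorem \ref{Kleene}) to produce $n'$ with $\varphi_{n'}=\varphi_{s(n')}$. Then $W_{n'}$ is precisely the halting set of $f(n',\cdot)$, which by construction meets $\{0,1,\ldots,k\}$ in at most one element. Hence the input $(n',0,1,\ldots,k)$ satisfies the hypothesis of $g$, so by our standing assumption $g(n',0,1,\ldots,k)$ must halt with some $j\in\{0,\ldots,k\}$ such that $j\notin W_{n'}$.

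This produces the contradiction immediately: if $g(n',0,1,\ldots,k)=j$, the definition of $f$ forces $f(n',j)\downarrow$, i.e.\ $j\in W_{n'}$, contradicting $j\notin W_{n'}$. The main obstacle, and the only genuinely creative step, is the design of $f$ so that both (a) the constructed halting set automatically satisfies $|W_{n'}\cap\{0,\ldots,k\}|\le 1$, thereby activating the promised behaviour of $g$, and (b) the unique value that $f(n',\cdot)$ is willing to accept into $W_{n'}$ is precisely the output of $g$, so that $g$'s output is trapped by its own assumption. Once $f$ is set up this way, the s-m-n/Kleene step is routine and essentially identical to the previous lemma.
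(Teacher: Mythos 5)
Your proof is correct and follows essentially the same route as the paper: the same auxiliary function $f$ (halting only on the single value output by $g(n,0,\ldots,k)$), followed by the s-m-n theorem and the Kleene recursion theorem to obtain $n'$ with $\varphi_{n'}=\varphi_{s(n')}$, and the same contradiction that $g$'s output $j$ is forced into $W_{n'}$.
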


\begin{proof}
Assume such a $g$ exists. Define $f: \mathbb{N}\times \mathbb{N} \to \mathbb{N}$ by 
\[ f(n,m):= \left \{
 \begin{array}{l} 
    0 \textnormal{ if } g(n,0, \ldots, k)=j \in \{0, \ldots, k\} \textnormal{ and } m=j \\
        \uparrow \textnormal{ in all other cases} \\
 \end{array}
\right.
\]
Then $f$ is partial-recursive, since $g$ is.
By theorem \ref{smn}, there exists a recursive function $s: \mathbb{N} \to \mathbb{N}$ such that $f(n,m)=\varphi_{s(n)}(m)$ for all $m, n$. 
Since $s$ is recursive, theorem \ref{Kleene} shows that there must be some $n'$ such that $\varphi_{s(n')}=\varphi_{n'}$. Thus $f(n', m) = \varphi_{n'}(m) $ for all $m \in \mathbb{N}$. Moreover, by definition, $\varphi_{n'}(m)$ can halt on at most one of the cases $m=0,\ldots, m= k$ (if at all), and no other values. Thus $|\{0, \ldots, k\} \cap W_{n'}| \leq 1$. So $g(n',0, \ldots, k)$ will halt and output $j$ in $\{0, \ldots, k\}$ but not in $W_{n'}$ (by construction of $g$). 
But since $g(n', 0, \ldots, k)$ halts with output $j$ in $\{0, \ldots, k\}$, then $f(n',j)$ halts (by construction of $f$). Hence $\varphi_{n'}(j)$ halts (by definition of $\varphi_{n'}$), and so $j \in W_{n'}$ since $W_{n'}$ is precisely the halting set of $\varphi_{n'}$. Thus we have a contradiction, as we showed $j \notin W_{n'}$, so no such $g$ can exist.
\end{proof}

We require the following fundamental results in group computability, made clear in \cite{Mille-92}, which we shall appeal to in our final proofs.

\begin{lem}
Given a finite group presentation $P=\left \langle X|R \right \rangle$, we can recursively form the abelianisation presentation $P^{\ab}$ by adding the relations $[x_{i}, x_{j}]$ for all $x_{i}, x_{j}\in X$. Then $\overline{P^{\ab}}$ is the abelianisation of the group $\overline{P}$. Moreover, the word and isomorphism problems are uniformly solvable for finitely presented abelian groups.% (see \cite{Mille-92}).
\end{lem}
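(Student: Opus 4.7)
The plan is to handle the three assertions of the lemma in sequence, since they are largely independent. The first assertion is purely syntactic: from a finite generating set $X=\{x_{1},\ldots,x_{n}\}$ one can, in finitely many steps, enumerate the finite collection $\{[x_{i},x_{j}]:1\leq i<j\leq n\}$ of generator commutators and append them to $R$, and this procedure is clearly uniformly recursive in $P$.

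For the second assertion I would invoke von Dyck's theorem in both directions. In $\overline{P^{\ab}}$ every commutator $[u,v]$ of words on $X$ is trivial, since the standard commutator identities $[uv,w]={}^{u}[v,w]\cdot[u,w]$ and $[u,vw]=[u,v]\cdot{}^{v}[u,w]$ let one express $[u,v]$ as a product of conjugates of the generator commutators $[x_{i},x_{j}]$; hence $\overline{P^{\ab}}$ is abelian, so the natural surjection $\overline{P}\twoheadrightarrow\overline{P^{\ab}}$ factors through the abelianisation $\overline{P}/[\overline{P},\overline{P}]$. Conversely, the defining relators of $P^{\ab}$ are all satisfied in $\overline{P}/[\overline{P},\overline{P}]$ (those from $R$ because they already hold in $\overline{P}$, and the commutator relators because the quotient is abelian), giving a surjection the other way. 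Composing these two maps on generators shows they are mutually inverse, yielding the desired isomorphism.

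For the third assertion I would appeal to the classical Smith Normal Form algorithm. Encode each relator of the abelian presentation as the integer row vector of exponent sums in $x_{1},\ldots,x_{n}$, producing a finite integer matrix $M$. Compute the Smith Normal Form of $M$ via elementary integer row and column operations; this is a standard recursive procedure and produces invariant factors $d_{1}\mid d_{2}\mid\cdots\mid d_{k}$ together with a free rank $r$, so that $\overline{P}\cong\mathbb{Z}^{r}\oplus\mathbb{Z}/d_{1}\mathbb{Z}\oplus\cdots\oplus\mathbb{Z}/d_{k}\mathbb{Z}$, together with the explicit change-of-basis matrices. The word problem is then solved by taking the exponent-sum vector of an input word, applying the basis change, and testing whether the result is zero in each coordinate of the invariant factor decomposition. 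The isomorphism problem is solved by computing the invariant factors of both input presentations and comparing them, which, by the structure theorem for finitely generated abelian groups, is a complete invariant.

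The only mild obstacle is bookkeeping: one must verify that the Smith Normal Form algorithm and its auxiliary basis changes are genuinely uniform in the input presentation, and that the reductions described in the word and isomorphism parts are effective. None of this presents any real difficulty, as the whole lemma is classical; the reason to state it explicitly is that it is invoked repeatedly later in the paper.
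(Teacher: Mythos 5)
Your proposal is correct, and in fact it supplies more than the paper does: the paper states this lemma without proof, simply citing Miller's survey \cite{Mille-92} as the source of these ``fundamental results in group computability.'' Your argument is exactly the classical one behind that citation --- the syntactic formation of $P^{\ab}$ is trivially recursive, the identification of $\overline{P^{\ab}}$ with the abelianisation follows from von Dyck's theorem applied in both directions (or, even more simply, from the observation that the kernel of $\overline{P}\twoheadrightarrow\overline{P^{\ab}}$ is the normal closure of the generator commutators, which is $[\overline{P},\overline{P}]$ since $X$ generates), and Smith Normal Form gives the uniform solution of the word and isomorphism problems via the invariant factor decomposition. The only point worth tightening is the word-problem step: after the basis change one tests the torsion coordinates for divisibility by the corresponding invariant factors $d_{i}$ (i.e.\ vanishing modulo $d_{i}$) and the free coordinates for literal vanishing, which is what you clearly intend. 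No gap.
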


\begin{lem}\label{isomorphism}
There is an algorithm that, on input of two finite presentations of groups $P,Q$, halts if and only if $\overline{P} \cong \overline{Q}$, and outputs an explicit isomorphism between them as a map on the generators. Hence, given a finite group presentation $P$, the set of all finite presentations $P_{i}$ satisfying $\overline{P_{i}} \cong \overline{P}$ is recursively enumerable.% (see \cite{Mille-92}).
\end{lem}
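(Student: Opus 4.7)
The plan is to describe a semi-decision procedure based on parallel enumeration. The key observation is that all of the conditions needed to certify that a candidate map is an isomorphism are themselves recursively enumerable, provided one enumerates a candidate two-sided inverse simultaneously.

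Concretely, given finite presentations $P=\langle X|R\rangle$ and $Q=\langle Y|S\rangle$, I would enumerate all pairs of finite maps $\theta:X\to W(Y)$ and $\psi:Y\to W(X)$ (there are only countably many such pairs, and they are obviously recursively enumerable as finite data). For each pair $(\theta,\psi)$, I would run, by dovetailing, the following four checks in parallel:
\begin{compactitem}
\item For each $r\in R$, search for $\theta(r)$ in an enumeration of the words trivial in $\overline{Q}$;
\item For each $s\in S$, search for $\psi(s)$ in an enumeration of the words trivial in $\overline{P}$;
\item For each $x\in X$, search for $\psi(\theta(x))x^{-1}$ in an enumeration of the words trivial in $\overline{P}$;
\item For each $y\in Y$, search for $\theta(\psi(y))y^{-1}$ in an enumeration of the words trivial in $\overline{Q}$.
\end{compactitem}
Each individual check is a standard r.e. search, since the set of trivial words in a finitely presented group is r.e. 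Whenever all four searches terminate for some pair $(\theta,\psi)$, the first two guarantee that $\theta$ and $\psi$ extend to well-defined homomorphisms $\overline{\theta}:\overline{P}\to\overline{Q}$ and $\overline{\psi}:\overline{Q}\to\overline{P}$, and the last two guarantee that they are mutually inverse on generators, and hence (by extension to words) on all of $\overline{P}$ and $\overline{Q}$. At that point the algorithm halts and outputs $\theta$ as the desired isomorphism.

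Conversely, if $\overline{P}\cong\overline{Q}$, then some isomorphism $\alpha:\overline{P}\to\overline{Q}$ and its inverse exist. Choosing representative words for $\alpha(x)$ for each $x\in X$ and for $\alpha^{-1}(y)$ for each $y\in Y$ yields a pair $(\theta,\psi)$ for which all four checks will eventually succeed; since this pair is eventually reached in the enumeration, the procedure will halt. Thus the algorithm halts precisely when $\overline{P}\cong\overline{Q}$, which is the first assertion. The main subtlety is simply that we do not know how to verify that a single $\theta$ is an isomorphism without having candidate-inverse data, which is why we enumerate pairs rather than single maps; there is no real obstacle beyond organising the parallel search correctly.

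For the second assertion, to enumerate all finite presentations $P_i$ with $\overline{P_i}\cong\overline{P}$, I would enumerate all finite group presentations $P_1,P_2,\ldots$ (a recursive enumeration) and, in a dovetailed fashion, run the isomorphism algorithm on each pair $(P,P_i)$. Whenever the algorithm halts on the pair $(P,P_i)$, I add $P_i$ to the output list. By the first part this enumeration lists exactly those $P_i$ with $\overline{P_i}\cong\overline{P}$, showing that this set is recursively enumerable.
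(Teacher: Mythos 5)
Your argument is correct and is exactly the standard argument the paper relies on: the lemma is stated without proof, with a citation to Miller's survey, where it is established by precisely this enumeration of candidate pairs $(\theta,\psi)$ together with r.e.\ verification that the relators map to trivial words and that the composites fix the generators. The only point worth making explicit is that the search must be dovetailed across all pairs $(\theta,\psi)$ simultaneously (not run pair-by-pair), so that the non-terminating checks on a bad pair cannot block the pair witnessing the isomorphism; with that organisation both assertions follow as you describe.
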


\section{Finding non-trivial or non-commuting elements}

We are now able to prove the first of the main results of this paper on finding non-trivial or non-commuting elements.

\begin{lem}\label{no n groups}
Fix any $k>0$. Then there is no algorithm that, on input of $k+1$ finite presentations of groups such that at least one describes a non-trivial group, removes one so that of the remaining presentations at least one describes a non-trivial group.
\end{lem}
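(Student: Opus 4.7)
The plan is to reduce this directly to Lemma \ref{arb length} via the $\Pi_{m,n}$ construction of Theorem \ref{chuck result}. Suppose for contradiction that such an algorithm $A$ existed. I would use $A$ to construct a partial recursive function $g$ of the sort whose existence is forbidden by Lemma \ref{arb length}, and therefore derive a contradiction.

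Given input $n, x_{0}, \ldots, x_{k} \in \mathbb{N}$ with $\{x_{0}, \ldots, x_{k}\} \nsubseteq W_{n}$, I would first apply the recursive procedure of Theorem \ref{chuck result} to produce the $k+1$ finite presentations $\Pi_{n,x_{0}}, \ldots, \Pi_{n,x_{k}}$. By that theorem, $\overline{\Pi}_{n,x_{j}} = \{1\}$ if and only if $x_{j} \in W_{n}$. Since $\{x_{0}, \ldots, x_{k}\} \nsubseteq W_{n}$, there exists at least one $j$ with $x_{j} \notin W_{n}$, i.e.\ at least one of the $\overline{\Pi}_{n,x_{j}}$ is non-trivial. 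Thus the tuple $(\Pi_{n,x_{0}}, \ldots, \Pi_{n,x_{k}})$ is a valid input to the hypothetical algorithm $A$.

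Feeding this tuple into $A$ yields an index $i \in \{0, \ldots, k\}$ such that, after removing $\Pi_{n,x_{i}}$, at least one of the remaining presentations still presents a non-trivial group. Translating back via Theorem \ref{chuck result}, this means at least one $j \in \{0, \ldots, \hat{i}, \ldots, k\}$ satisfies $x_{j} \notin W_{n}$, i.e.\ $\{x_{0}, \ldots, \hat{x}_{i}, \ldots, x_{k}\} \nsubseteq W_{n}$. I would then define $g(n, x_{0}, \ldots, x_{k}) := x_{i}$, which is partial recursive because the construction of $\Pi_{n,x_{j}}$ is recursive and $A$ is by assumption an algorithm. This $g$ satisfies the hypotheses of Lemma \ref{arb length}, contradicting its conclusion.

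I do not expect any serious obstacle: the only non-trivial ingredient is Theorem \ref{chuck result}, which has already been established, and the reduction is a straightforward one-to-one correspondence between ``presentation $\Pi_{n,x_{j}}$ is non-trivial'' and ``$x_{j} \notin W_{n}$''. The delicate point to state carefully is merely that the hypothesis on the input tuple in the lemma statement (``at least one describes a non-trivial group'') matches the hypothesis in Lemma \ref{arb length} (``$\{x_{0}, \ldots, x_{k}\} \nsubseteq W_{n}$'') under the translation supplied by Theorem \ref{chuck result}, so that applying $A$ produces a valid output on precisely those inputs for which $g$ is required to halt.
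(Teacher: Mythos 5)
Your proposal is correct and follows essentially the same route as the paper: reduce to Lemma \ref{arb length} by constructing the presentations $\Pi_{n,x_{0}},\ldots,\Pi_{n,x_{k}}$ of Theorem \ref{chuck result} and using the hypothetical removal algorithm to define the forbidden partial recursive function $g$. The paper's own proof is just a slightly terser version of the same reduction, so there is nothing to add.
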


\begin{proof}
Assume such an algorithm exists; we use this to contradict lemma \ref{arb length}. Given any $(k+2)$-tuple $(n,x_{0}, \ldots, x_{k})$ with $\{x_{0}, \ldots, x_{k}  \} \nsubseteq W_{n}$, form the groups with presentations $\Pi_{n,x_{0}}, \ldots,  \Pi_{n,x_{k}}$ as in theorem \ref{chuck result} whereby $\overline{\Pi}_{n,x_{i}} =\{e\}$ if and only if $x_{i} \in W_{n}$. % Since at least one of $x_{0}, \ldots, x_{k} \notin W_{n}$, we have that 
Hence at least one of $\overline{\Pi}_{n,x_{0}},\ldots,  \overline{\Pi}_{n,x_{k}} \neq \{e\}$. Now algorithmically remove one of these presentations such that those remaining do not all define trivial groups. This immediately yields one of $x_{0}, \ldots, x_{k}$ to remove such that not all remaining $x_{i}$ lie in $W_{n}$. But lemma \ref{arb length} asserts that no such algorithm can exist.
\end{proof}

\begin{thm}\label{unbounded word}
Fix any $k>0$. Then there is no algorithm that, on input of a finite presentation $P=\left \langle X|R \right \rangle$ of a non-trivial group $\overline{P}$, outputs a word $w \in W(X)$ of length at most $k$ such that $w$ is non-trivial in $\overline{P}$.
\end{thm}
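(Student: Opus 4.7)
The plan is to reduce directly to Lemma \ref{no n groups} (which in turn rests on the recursion-theoretic Lemma \ref{arb length}) by encoding $k+1$ candidate presentations into a single one via a free product, and exploiting a pigeonhole argument on the length of the output word.

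Suppose for contradiction that such an algorithm $\mathcal{A}$ exists. Given $k+1$ finite presentations $P_{0}, \ldots, P_{k}$ of groups, at least one of which presents a non-trivial group, I would recursively form the free product presentation $P := P_{0} * P_{1} * \cdots * P_{k}$ in the sense of the Notation paragraph (disjoint union of generators and relators). Since at least one factor is non-trivial, $\overline{P}$ is non-trivial, so applying $\mathcal{A}$ to $P$ yields a word $w$ over the combined generating set, of length at most $k$, such that $w \neq e$ in $\overline{P}$.

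The key pigeonhole observation is that since the generating sets of the $P_{i}$ are pairwise disjoint and $l(w) \leq k$, the letters of $w$ involve at most $k$ of the $k+1$ factors. Hence by mere inspection of $w$ we can recursively identify some index $i$ such that no generator of $P_{i}$ appears in $w$. I claim that discarding $P_{i}$ leaves a list in which at least one surviving factor is non-trivial. Indeed, $w$ lies in the subgroup of $\overline{P}$ generated by the images of the $\overline{P}_{j}$ with $j \neq i$, and by the normal form theorem for free products this subgroup is canonically isomorphic to $\overline{P}_{0} * \cdots * \hat{\overline{P}}_{i} * \cdots * \overline{P}_{k}$; thus $w$ is non-trivial there, forcing the reduced free product (and hence at least one of its factors) to be non-trivial.

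This procedure is precisely an algorithm of the kind ruled out by Lemma \ref{no n groups}, yielding the desired contradiction. The only substantive ingredient beyond bookkeeping is the normal form theorem, which guarantees that removing a factor unused by $w$ does not accidentally collapse $w$ to the identity; everything else, including recursive assembly of the free product presentation and recursive detection of a missing factor, is routine. I therefore do not expect a genuine obstacle in carrying out this plan.
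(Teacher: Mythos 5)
Your proposal is correct and follows essentially the same route as the paper: form the free product of the $k+1$ presentations, apply the hypothetical algorithm to obtain a non-trivial word of length at most $k$, use pigeonhole on the disjoint generating sets to find a factor whose generators do not appear in $w$, and conclude that removing that factor contradicts Lemma \ref{no n groups}. Your explicit appeal to the normal form theorem simply spells out the step the paper leaves implicit; no substantive difference.
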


\begin{proof}
Assume such an algorithm exists; we use this to contradict lemma \ref{no n groups}. Given any $k+1$ finite presentations of groups $P_{0}=\left \langle X_{0}|R_{0} \right \rangle, \ldots,$ $P_{k}=\left \langle X_{k}|R_{k} \right \rangle$ for which at least one describes a non-trivial group, form their free product with presentation $Q=\left \langle X_{0}, \ldots, X_{k}|R_{0}, \ldots, R_{k} \right \rangle$. Apply the algorithm to $Q$ to give a non-trivial word $w$ of length at most $k$. Then there is an $i$ such that no element of $X_i$ appears in $w$. So removing $P_{i}$ from our original collection will not leave only presentations of the trivial group. But theorem \ref{no n groups} asserts no such algorithm can exist.
\end{proof}

\begin{cor}\label{pick a gen}
There is no algorithm that, on input of a finite presentation $P=\left \langle X|R \right \rangle$ of a non-trivial group $\overline{P}$, outputs a generator $x$ in $X$ such that $x$ is non-trivial in $\overline{P}$.
\end{cor}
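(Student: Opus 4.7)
The plan is to observe that this corollary is an essentially immediate consequence of Theorem \ref{unbounded word} specialised to $k=1$. First I would suppose, towards a contradiction, that there does exist an algorithm $\mathcal{A}$ which, on input of any finite presentation $P = \langle X | R \rangle$ of a non-trivial group $\overline{P}$, halts and returns some generator $x \in X$ that is non-trivial in $\overline{P}$.

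Next I would invoke the key (and only) observation: any single generator $x \in X$, viewed as an element of $W(X)$, is a word with $l(x) = 1$, and so in particular a word of length at most $1$. Consequently the hypothesised algorithm $\mathcal{A}$ already meets the stronger output specification whose existence is refuted by Theorem \ref{unbounded word} in the case $k = 1$, and the contradiction is immediate. No free product of the groups $\Pi_{m,n}$ constructed in Section 2, and no further appeal to the recursion-theoretic Lemmata \ref{arb length} or \ref{two groups recursion}, is needed beyond what has already been used to prove Theorem \ref{unbounded word}.

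The main (indeed only) obstacle is the purely conceptual one of recognising that ``pick a non-trivial generator'' is a formal strengthening of ``pick a non-trivial word of length at most $1$''. A minor sanity check worth mentioning is that the only length-$1$ elements of $W(X)$ that are not generators are the formal inverses $x_i^{-1}$; since $x_i^{-1}$ is non-trivial in $\overline{P}$ if and only if $x_i$ is non-trivial in $\overline{P}$, the two problems are in fact equivalent, so no information is lost in passing between the two formulations.
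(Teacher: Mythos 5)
Your proposal is correct and is exactly the intended argument: the paper states this as an immediate corollary of Theorem \ref{unbounded word} (take $k=1$; a non-trivial generator is a non-trivial word of length at most $1$), giving no further proof. Your extra remark on the equivalence with inverses is harmless but not needed for the deduction.
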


It is tempting to try and generalise the above results, by removing the bound on the length of output words. Thus we think it natural to ask the following, which was our motivating question from the introduction:

\begin{ques}\label{ques1}
Is there an algorithm that, on input of a finite presentation $P=\left \langle X|R \right \rangle$ of a non-trivial group $\overline{P}$, outputs a word $w \in W(X)$ such that $w$ is non-trivial in $\overline{P}$?
\end{ques}

We conjecture that the answer to this question is no, on the basis of the results we have already obtained in this section. As a corollary to theorem \ref{unbounded word}, we show the following closely related result.

\begin{cor}\label{ab bounded}
Fix any $k>0$. Then there is no algorithm that, on input of a finite presentation $P=\left \langle X|R \right \rangle$ of a non-abelian group $\overline{P}$ outputs two words $w, w'\in W(X)$, each of length at most $k$, such that $[w, w']$ is non-trivial in $\overline{P}$.
\end{cor}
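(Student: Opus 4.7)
The plan is to reduce Corollary~\ref{ab bounded} directly to Theorem~\ref{unbounded word} with parameter $4k$, using the trivial observation that the commutator $[w,w']=ww'w^{-1}w'^{-1}$ of two words of length at most $k$ is itself a word of length at most $4k$, and is non-trivial precisely when $w$ and $w'$ fail to commute. So a hypothetical algorithm satisfying the corollary immediately produces a short non-trivial element whenever the input group is non-abelian; the only remaining work is to bridge between the non-trivial and non-abelian hypotheses.

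Assume for contradiction that such an algorithm $\mathcal{A}$ exists. Given a finite presentation $P=\langle X\mid R\rangle$ of a non-trivial group $\overline{P}$, I would first recursively form the abelianisation presentation $P^{\ab}$ by adjoining all commutator relators $[x_i,x_j]$. Since the word problem is uniformly solvable for finitely presented abelian groups, one can test each generator $x\in X$ in $\overline{P^{\ab}}$: if some $x$ is non-trivial there, its image is also non-trivial in $\overline{P}$, and we output $x$ as a word of length $1$.

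In the remaining case every generator dies in $\overline{P^{\ab}}$, so $\overline{P^{\ab}}=\{1\}$ and $\overline{P}$ is perfect. A non-trivial perfect group is automatically non-abelian, since an abelian group coincides with its own abelianisation and can therefore be perfect only when trivial. We may then feed $P$ to $\mathcal{A}$ to obtain words $w,w'\in W(X)$ of length at most $k$ with $[w,w']$ non-trivial in $\overline{P}$, and output $ww'w^{-1}w'^{-1}$, a word of length at most $4k$ which is non-trivial in $\overline{P}$.

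Taken together, these two branches constitute a uniform algorithm that, on every finite presentation of a non-trivial group, outputs a non-trivial word of length at most $4k$. This directly contradicts Theorem~\ref{unbounded word} applied with bound $4k$. The only conceptual point is the passage from non-trivial to non-abelian via the perfect quotient; once in the non-abelian regime, forming the commutator does the rest, and I expect no real obstacle beyond making sure the abelianisation bookkeeping is genuinely recursive (which is ensured by the lemma on finitely presented abelian groups stated in Section~3).
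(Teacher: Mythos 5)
Your proposal is correct and takes essentially the same route as the paper: recursively test the abelianisation to handle the non-perfect case, and otherwise note that a non-trivial perfect group is non-abelian and feed $P$ to the hypothesized algorithm, contradicting Theorem~\ref{unbounded word}. The only cosmetic difference is that the paper outputs $w$ itself (non-trivial because $[w,w']\neq e$ forces $w\neq e$), so it contradicts the theorem with the same bound $k$, whereas you output the commutator and invoke the theorem with bound $4k$; both are valid since the theorem holds for every fixed bound.
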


\begin{proof}
Assume such an algorithm exists; we use this to contradict theorem \ref{unbounded word}. Given any finite presentation $P=\left \langle X|R \right \rangle$ of a non-trivial group, recursively form the abelianisation presentation $P^{\ab}$ and recursively test if $\overline{P^{\ab}}$ is trivial. If not, then recursively test all the generators from $P^{\ab}$ to find a non-trivial one; the corresponding generator in $\overline{P}$, call this $w$, must also be non-trivial, and has length $1$. If however $\overline{P^{\ab}}$ is trivial, then $\overline{P}$ must be perfect and hence non-abelian, since it is non-trivial. So use the algorithm to output words $w,w'\in W(X)$, each of length at most $k$, such that $[w,w']$ is non-trivial in $\overline{P}$. 
In either case we have recursively constructed a word $w \in W(X)$ that has length at most $k$ and is non-trivial in $\overline{P}$, contradicting theorem \ref{unbounded word}.
\end{proof}

\begin{cor}
There is no algorithm that, on input of a finite presentation $P=\left \langle X|R \right \rangle$ of a non-abelian group $\overline{P}$, outputs two generators $x,y \in X$  such that $[x,y]$ is non-trivial in $\overline{P}$.
\end{cor}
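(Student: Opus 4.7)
The plan is to deduce this corollary immediately from Corollary \ref{ab bounded} specialised to the case $k = 1$, using nothing more than the observation that every generator is itself a word of length one.

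First I would recall that, in the paper's notation, each $x \in X$ is a one-letter element of $W(X)$, hence a word of length $l(x) = 1$. Assuming for contradiction that an algorithm $A$ of the forbidden kind exists, I would then build from it an algorithm of the type ruled out by Corollary \ref{ab bounded} with $k = 1$: on input of any finite presentation $P = \langle X | R \rangle$ of a non-abelian group, run $A$ on $P$ to obtain generators $x, y \in X$ with $[x, y]$ non-trivial in $\overline{P}$, and then simply output $w := x$ and $w' := y$, regarded as elements of $W(X)$. Both $w$ and $w'$ have length exactly $1 \leq k$, and $[w, w']$ is non-trivial in $\overline{P}$ by hypothesis. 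This directly contradicts Corollary \ref{ab bounded} at $k = 1$.

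Because the argument is just a syntactic reduction to the $k = 1$ case of Corollary \ref{ab bounded}, I do not anticipate any substantive obstacle: all of the genuine content, including the abelianisation trick, the groups $\Pi_{m,n}$ from Theorem \ref{chuck result}, and the recursion-theoretic Lemma \ref{arb length}, has already been absorbed into Corollary \ref{ab bounded}. The only detail worth flagging is that $W(X)$ is defined as words over $X \cup X^{-1}$, but since a generator is by definition a single letter from $X$ (requiring no inverse symbols), its length in the paper's convention really is $1$, so the reduction is verbatim and no case analysis on signs is needed.
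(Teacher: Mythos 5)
Your reduction is correct and is exactly the argument the paper intends: the corollary is stated without proof precisely because any algorithm selecting two generators $x,y\in X$ with $[x,y]$ non-trivial would output two words of length $1\leq k$, contradicting Corollary \ref{ab bounded} (e.g.\ at $k=1$). Nothing further is needed.
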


Again, we are tempted to generalise this by removing the bound on the length of the output words, as stated in the following question:

\begin{ques}\label{ques2}
Is there an algorithm that, on input of a finite presentation $P=\left \langle X|R \right \rangle$ of a non-abelian group $\overline{P}$, outputs two words $w, w' \in W(X)$ such that $[w, w']$ is non-trivial in $\overline{P}$.
\end{ques}

We conjecture that the answer to this question is no. It can be easily shown by an argument almost identical to the proof of corollary \ref{ab bounded} that `no' to question \ref{ques2} implies `no' to question \ref{ques1}. Our partial results on these two questions have used very similar methods of proof, and it can be seen that the two questions are very closely and naturally related.

\section{Finding splittings}

We now proceed to our results regarding the splitting of groups into free products. As we shall see later, our results have applications to existing constructions in splittings of groups with more than one end, as well as splittings of manifolds.

%Let $G$ be a finitely-generated group; the \emph{Rank} of $G$ $\rank(G)$ is the minimal size of a generating set for $G$, and $G$ is \emph{indecomposable} if it cannot be written as the free product of two non-trivial groups. 

The following two theorems can be found in \cite{MKS}, on p.~192 and p.~245 respectively. Using these we demonstrate an important property of the groups constructed in theorem \ref{chuck result}.

\begin{thm}[Grushko-Neumann theorem]\label{Grushko}
Let $G$ be a finitely generated group, and suppose that $G \cong A*B$. Then $\rank(G)=\rank(A)+\rank(B)$.
\end{thm}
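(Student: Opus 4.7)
The plan is to establish both inequalities separately. The bound $\rank(A*B) \leq \rank(A) + \rank(B)$ is immediate: the union of a minimum-size generating set of $A$ with one of $B$ clearly generates $A*B$, since every element of $A*B$ is a product of syllables drawn from $A\cup B$. All the content of the theorem lies in the reverse inequality, which is Grushko's original contribution.

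For $\rank(A*B) \geq \rank(A) + \rank(B)$, set $n = \rank(G)$ and fix a surjection $\phi : F \twoheadrightarrow A*B$ from the free group $F$ of rank $n$ on a basis $\{x_{1}, \ldots, x_{n}\}$. The strategy is to perform Nielsen transformations on the basis --- elementary replacements $x_{i} \mapsto x_{i} x_{j}^{\pm 1}$, $x_{i}\mapsto x_{i}^{-1}$, and permutations, none of which change $\phi(F) = G$ --- until one reaches a new basis $\{y_{1}, \ldots, y_{n}\}$ with the property that each $\phi(y_{i})$ lies entirely in $A$ or entirely in $B$. Such a basis splits as a disjoint union $Y_{A} \sqcup Y_{B}$ according to which factor its image lands in, and the normal form for free products then shows that $\phi(Y_{A})$ generates $A$ and $\phi(Y_{B})$ generates $B$ (any nontrivial element of $A$ expressed via the $\phi(y_{i})$ must use only those in $Y_{A}$, by uniqueness of normal form). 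Summing, $n = |Y_{A}| + |Y_{B}| \geq \rank(A) + \rank(B)$.

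The hard step, and essential content of the theorem, is producing the basis $\{y_{i}\}$. I would argue by induction on the total $A*B$-syllable length $\sigma = \sum_{i=1}^{n} \lambda(\phi(x_{i}))$, where $\lambda(g)$ counts the number of alternating $A$- and $B$-syllables in the normal form of $g$. The inductive step --- what is usually called \emph{Grushko's lemma} --- asserts that if some $\phi(x_{i})$ has $\lambda(\phi(x_{i})) \geq 2$, then surjectivity of $\phi$ forces nontrivial cancellation between syllables of various $\phi(x_{j})$ when expressing, say, an arbitrary element of $A$; this cancellation can be exhibited by a Nielsen move on the $x_{j}$ that strictly decreases $\sigma$. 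The main obstacle is the careful case analysis needed to produce such a Nielsen move: one must track the first and last syllables of each $\phi(x_{i})$, use that both $A$ and $B$ lie in the image of $\phi$, and argue that certain syllable configurations are incompatible with minimality of $\sigma$. Once $\sigma$ cannot be reduced further, every $\phi(x_{i})$ must have syllable length at most $1$, and we are in the position to partition the basis as above and conclude.
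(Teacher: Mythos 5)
The framework you set up is sound: the inequality $\rank(A*B)\le\rank(A)+\rank(B)$ is indeed trivial, and the final bookkeeping (once one has a basis of $F$ whose $\phi$-images all lie in $A\cup B$, the normal form theorem forces the two parts to generate $A$ and $B$ respectively) is correct. The genuine gap is at the step you call \emph{Grushko's lemma}, and the specific mechanism you propose there is not just unproven but false as stated. You claim that whenever some $\phi(x_i)$ has syllable length at least $2$, a single Nielsen move strictly decreases $\sigma$, so that a tuple at which $\sigma$ cannot be reduced has all images in $A\cup B$. Counterexample: let $A=\langle a\rangle\cong\mathbb{Z}$, $B=\langle b\rangle\cong\mathbb{Z}$, and consider the generating triple $(a,\,b,\,a^{2}b^{2}a^{2})$ of $A*B$, with $\sigma=1+1+3=5$. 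The third entry has syllable length $3$, yet no elementary Nielsen transformation lowers $\sigma$: multiplying $a^{2}b^{2}a^{2}$ on either side by $a^{\pm1}$ still leaves three syllables, multiplying by $b^{\pm1}$ gives four, and replacing $a$ or $b$ by its product with any other entry of the triple only increases $\sigma$ (inversions and permutations change nothing). So the greedy induction stalls at a tuple not of the desired form; the triple is of course Nielsen equivalent to $(a,b,1)$, but only via a sequence of moves that does not strictly decrease $\sigma$ at each step. If instead you read ``$\sigma$ cannot be reduced further'' as minimality over the entire Nielsen equivalence class, then the assertion that such a tuple has all entries in the factors is essentially the strengthened Grushko theorem itself, and ``surjectivity forces nontrivial cancellation'' is not an argument for it. The known proofs get around exactly this difficulty by different means: Stallings' binding-tie/folding argument, Lyndon's cancellation-diagram proof, Bass--Serre theory, or the original Grushko/B.\ H.\ Neumann induction, which uses a more delicate complexity than total syllable length and operates on the chosen expressions of the factor elements, not merely on the generating tuple by single Nielsen moves.

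For comparison: the paper does not prove this statement at all; it is quoted as a classical result with a reference to Magnus--Karrass--Solitar, and only its consequence (rank additivity, used to show the non-trivial $\overline{\Pi}_{m,n}$ are indecomposable) matters there. So you are attempting to supply a proof where the paper supplies a citation, which is fine in principle, but as written the essential content of the theorem --- the reduction to a basis whose images lie in the factors --- is missing, and the inductive step you rely on fails.
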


%The following is now immediate from the Grushko-Neumann theorem.

%\begin{cor}
%Every non-trivial finitely generated group $G$ splits as the free product of finitely many indecomposable groups.
%\end{cor}

%The following can be found in \cite{MKS} p. ??????? (FIND THIS LATER)

%\begin{thm}\label{Kurosh}[Kurosh subgroup theorem]
%FILL THIS IN LATER
%\end{thm}

%With the above in mind we will also make use of the following theorem, given in \cite{MKS} p.~245, which is a consequence of the theorems of Kurosh and Grushko-Neumann (also in \cite{MKS}).
%We require the following theorem, given in \cite{MKS} p.~245, which is a consequence of the theorems of Kurosh and Grushko-Neumann (both in \cite{MKS}).

\begin{thm}[Grushko-Neumann decomposition]\label{grush neum}
Let $P$ be a finite presentation of a group that splits as a free product $A_{1}* \dots *A_{n}$, with all the $A_{i}$ indecomposable. % and non-cyclic. 
Let $B_{1}* \dots *B_{k}$ be another such splitting into indecomposable % (but not necessarily non-cyclic) 
groups. Then $n=k$, and there exists a permutation $\sigma \in S_{n}$ such that $A_{i } \cong B_{\sigma(i)}$ for all $1 \leq i \leq n$.
\end{thm}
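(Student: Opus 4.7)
\emph{Plan:} The approach I would take relies on the Kurosh subgroup theorem together with Grushko's theorem (Theorem~\ref{Grushko}). Recall that Kurosh's theorem asserts that any subgroup $H$ of a free product $A_1 * \cdots * A_n$ is isomorphic to a free product of a free group $F$ with conjugates $g_\alpha H_\alpha g_\alpha^{-1}$, where each $H_\alpha$ lies in some $A_{i(\alpha)}$.

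First I would apply Kurosh to each factor $B_j$ sitting inside $G = A_1 * \cdots * A_n$. Since $B_j$ is indecomposable, its Kurosh decomposition must collapse to a single non-trivial term; this yields two possible cases: either (i) $B_j$ is conjugate in $G$ into some $A_{\sigma(j)}$, or (ii) $B_j$ comes entirely from the free part and is itself infinite cyclic (since free groups of rank $\geq 2$ visibly split). Performing the same analysis on each $A_i$ with respect to $G = B_1 * \cdots * B_k$ produces a symmetric index map $\tau$ in the reverse direction.

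Next, I would upgrade $\sigma$ and $\tau$ to mutually inverse bijections and promote the embeddings to isomorphisms. Composing the two chains of embeddings gives, for each $i$, a conjugate inclusion $cA_ic^{-1} \leq A_{\sigma(\tau(i))}$. Free factors of a free product are malnormal and pairwise non-conjugate, so any non-trivial inclusion of this form forces $\sigma(\tau(i)) = i$ and $c \in A_i$; hence $cA_ic^{-1} = A_i$. The composition is then an inner automorphism of $A_i$, and since composing two injections gives an isomorphism only when each is already surjective, the intermediate embeddings $A_i \hookrightarrow B_{\tau(i)} \hookrightarrow A_{\sigma(\tau(i))} = A_i$ must both be isomorphisms. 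In particular $A_i \cong B_{\tau(i)}$, and by symmetry $n = k$.

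The main obstacle will be ruling out case (ii) for factors not coming from any $A_i$: for instance $\langle a_1 a_2 \rangle$ in $A_1 * A_2$ is indecomposable and infinite cyclic yet is not conjugate into either factor, so a priori some $B_j$ could be of this ``hyperbolic'' type and disrupt the pairing. The resolution is a global rank count: iterating Theorem~\ref{Grushko} gives $\rank(G) = \sum_i \rank(A_i) = \sum_j \rank(B_j)$, and combining this with the Kurosh rank identity $\rank(B_j) = \rank(F_j) + \sum_\alpha \rank(H_\alpha)$ summed over $j$ forces every $F_j$ to be trivial unless it is matched with some $A_i \cong \mathbb{Z}$. This rank bookkeeping is the delicate step; once it is in place, the rest of the argument is purely combinatorial.
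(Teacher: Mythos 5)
The paper does not actually prove this theorem; it is quoted from \cite{MKS} (p.~245), so there is no internal argument to compare against, and your sketch should be judged on its own merits. It is essentially the standard Kurosh-plus-Grushko proof (the one in the cited source), and the outline is sound: Kurosh applied to each $B_{j}$, indecomposability collapsing the Kurosh decomposition to a single term, the fact that $gA_{i}g^{-1}\cap A_{j}\neq 1$ forces $i=j$ and $g\in A_{i}$ to turn the two index maps into mutually inverse bijections realising conjugacies (hence isomorphisms) of factors, and a rank count for the infinite cyclic factors. One clarification on your final step, which as written is slightly garbled: no ``Kurosh rank identity summed over $j$'' is needed to kill the free parts $F_{j}$, since indecomposability of $B_{j}$ already forces its Kurosh decomposition to be either a single conjugate $gHg^{-1}$ with $H\leq A_{i}$ or a free group, which, being non-trivial and indecomposable, is $\mathbb{Z}$; moreover a non-cyclic $B_{j}$ cannot be conjugate into a $\mathbb{Z}$-factor (its image would be a non-trivial subgroup of $\mathbb{Z}$), so the pairing argument runs entirely among the non-$\mathbb{Z}$ factors on both sides. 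The only job of the rank count is then, after those factors are matched isomorphically, to use $\rank(\overline{P})=\sum_{i}\rank(A_{i})=\sum_{j}\rank(B_{j})$ (Theorem~\ref{Grushko} iterated, valid since finite presentability gives finite rank) to subtract the matched, equal ranks and conclude that both decompositions contain the same number of $\mathbb{Z}$-factors; this gives $n=k$ and lets the permutation be extended over the cyclic factors, since they are all isomorphic to $\mathbb{Z}$.
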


\begin{lem}\label{dont split}
If the group $\overline{\Pi}_{m,n}$ from theorem \ref{chuck result} is non-trivial (equivalently, $n \notin W_{m}$), then it is indecomposable.
\end{lem}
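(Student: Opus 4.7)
The plan is to combine three facts about $\overline{\Pi}_{m,n}$ that are already at our disposal: by Corollary \ref{rabin lem}, when non-trivial it has rank exactly $2$; by the discussion after Theorem \ref{chuck result}, it is torsion free; and by the same corollary it is perfect. The argument will be a short proof by contradiction driven by the Grushko-Neumann theorem.

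I would suppose that $\overline{\Pi}_{m,n}$ is non-trivial and splits as $A * B$ with both $A$ and $B$ non-trivial. Applying Theorem \ref{Grushko} gives $\rank(A) + \rank(B) = \rank(\overline{\Pi}_{m,n}) = 2$, so necessarily $\rank(A) = \rank(B) = 1$, i.e.\ both factors are cyclic. Because $\overline{\Pi}_{m,n}$ is torsion free, any subgroup, in particular each cyclic factor, is torsion free; so $A \cong B \cong \mathbb{Z}$ and hence $\overline{\Pi}_{m,n} \cong \mathbb{Z} * \mathbb{Z}$, the free group $F_2$ of rank $2$.

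This is the contradiction I want, because $\overline{\Pi}_{m,n}$ is perfect whereas $F_2$ has abelianisation $\mathbb{Z}^2 \neq \{1\}$, so $F_2$ is not perfect. Hence no such splitting can exist, and $\overline{\Pi}_{m,n}$ is indecomposable.

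I do not anticipate any serious obstacle: all the input facts (rank $2$, torsion free, perfect) are already established before the statement of the lemma, and Grushko-Neumann provides exactly the additivity needed to force the factors to be cyclic. The only mild subtlety is to notice that we must use both the torsion-freeness (to promote finite cyclic to infinite cyclic) and the perfectness (to rule out $F_2$), rather than either alone; using only one of the two properties does not close the argument.
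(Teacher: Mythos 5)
Your proof is correct and follows essentially the same route as the paper: Grushko--Neumann forces any non-trivial splitting of the rank-$2$ group $\overline{\Pi}_{m,n}$ into two rank-$1$ (cyclic) factors, and perfectness then gives the contradiction. The only difference is your detour through torsion-freeness to upgrade the factors to $\mathbb{Z}*\mathbb{Z}$; this step is unnecessary, and your closing remark that both torsion-freeness and perfectness are needed is not right: the abelianisation of a free product of \emph{any} two non-trivial cyclic groups (finite or infinite) is their direct sum, which is non-trivial, so perfectness together with the rank count already closes the argument, exactly as in the paper.
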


\begin{proof}
Assume it is not; we proceed by contradiction. If $\overline{\Pi}_{m,n}$ is non-trivial then by corollary \ref{rabin lem} it is perfect and has rank precisely 2. However, the Grushko-Neumann theorem asserts that a non-trivial splitting of it must have rank 2, so would be the free product of 2 groups of rank 1; two cyclic groups. This would have non-trivial abelianisation, namely the direct sum of two non-trivial cyclic groups, contradicting the fact that $\overline{\Pi}_{m,n}$ is perfect.
\end{proof}

We can now prove the main technical result for this section; theorem \ref{split} follows as an immediate consequence.

\begin{lem}\label{presplit}
There is no algorithm that, on input of a finite presentation of the form $Q:=\Pi_{n,a}*\Pi_{n,b}* \Pi_{n,c}$ where $|\{a, b, c\} \cap W_{n}| \leq 1$, outputs two finite presentations $P_{1}, P_{2}$ which represent non-trivial groups and whose free product $\overline{P}_{1}* \overline{P}_{2}$ is isomorphic to $\overline{Q}$.
\end{lem}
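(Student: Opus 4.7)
The plan is to derive a contradiction with Lemma \ref{two groups recursion} in the case $k=2$. Suppose for contradiction such an algorithm $\mathcal{A}$ exists. Given a tuple $(n, a, b, c)$ with $a, b, c$ distinct and $|\{a,b,c\} \cap W_n| \leq 1$, invoke Theorem \ref{chuck result} to construct $\Pi_{n,a}, \Pi_{n,b}, \Pi_{n,c}$ and form $Q := \Pi_{n,a}*\Pi_{n,b}*\Pi_{n,c}$. At most one of $\overline{\Pi}_{n,a}, \overline{\Pi}_{n,b}, \overline{\Pi}_{n,c}$ is trivial, so $\overline{Q}$ is genuinely a free product of two non-trivial finitely presented groups and $Q$ is a valid input for $\mathcal{A}$. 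Applying $\mathcal{A}$ returns $P_1, P_2$ with $\overline{P}_1, \overline{P}_2 \neq \{1\}$ and $\overline{P}_1 * \overline{P}_2 \cong \overline{Q}$.

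The crux of the argument is a rigidity statement from Grushko-Neumann. By Lemma \ref{dont split} each non-trivial $\overline{\Pi}_{n,x}$ is indecomposable, so the canonical Grushko-Neumann factorisation of $\overline{Q}$ consists of the two or three non-trivial $\overline{\Pi}_{n,x}$ with $x \in \{a,b,c\}$; call the number of these factors $N \in \{2,3\}$. On the other hand each $\overline{P}_i$ is finitely generated and hence, by Theorem \ref{Grushko} applied inductively on rank, decomposes as a free product of finitely many indecomposable groups, say $M_i \geq 1$ of them. Concatenating these gives an indecomposable factorisation of $\overline{Q}$ with $M_1 + M_2$ factors, and Theorem \ref{grush neum} forces $M_1 + M_2 = N \leq 3$. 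Hence at least one $M_i$ equals $1$, which means at least one $\overline{P}_i$ is itself indecomposable and, by the uniqueness clause of Theorem \ref{grush neum}, is isomorphic to some non-trivial $\overline{\Pi}_{n,x}$.

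To extract such an $x$ recursively, I would invoke Lemma \ref{isomorphism}: run in parallel the six isomorphism-detection procedures comparing $P_i$ with $\Pi_{n,x}$ for $(i,x) \in \{1,2\} \times \{a,b,c\}$, and output $x$ as soon as any one of them halts. The previous paragraph guarantees at least one such pair satisfies $\overline{P}_i \cong \overline{\Pi}_{n,x}$, so the parallel search terminates, and any $x$ so produced satisfies $\overline{\Pi}_{n,x} \cong \overline{P}_i \neq \{1\}$, hence $x \notin W_n$ by Theorem \ref{chuck result}. Specialising to the inputs $(a,b,c) = (0,1,2)$ yields a partial recursive function of $n$ that witnesses the negation of Lemma \ref{two groups recursion} for $k=2$, the desired contradiction.

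The main obstacle, and also the reason the plan succeeds, is that we have no effective way to decide which $\overline{\Pi}_{n,x}$ are trivial; one cannot simply inspect the three factors. What rescues the argument is the Grushko-Neumann uniqueness statement, which forces at least one non-trivial $\overline{\Pi}_{n,x}$ to appear, up to isomorphism, as one of the $\overline{P}_i$ output by $\mathcal{A}$. This matching is precisely what the semi-decidable isomorphism search of Lemma \ref{isomorphism} can detect, converting the structural rigidity into the partial algorithm needed to contradict Lemma \ref{two groups recursion}.
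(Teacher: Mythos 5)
Your argument is correct and takes essentially the same route as the paper's proof: form $Q$, use indecomposability of the non-trivial $\overline{\Pi}_{n,x}$ (lemma \ref{dont split}) together with the Grushko--Neumann uniqueness theorem \ref{grush neum} to conclude that one of the $\overline{\Pi}_{n,x}$ must be isomorphic to one of the output factors $\overline{P}_1,\overline{P}_2$, then run the semi-decidable isomorphism searches of lemma \ref{isomorphism} in parallel to extract some $x \notin W_{n}$, contradicting lemma \ref{two groups recursion}. The only difference is cosmetic: your counting argument ($M_{1}+M_{2}=N\leq 3$ forces some $M_{i}=1$) replaces the paper's explicit two-case analysis of whether exactly one or none of $a,b,c$ lies in $W_{n}$.
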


Note that the above group $\overline{Q}$ will always split as a non-trivial free product, as at least two of the groups $\overline{\Pi}_{n,a}, \overline{\Pi}_{n,b}, \overline{\Pi}_{n,c}$ must be non-trivial since $|\{a, b, c\} \cap W_{n}| \leq 1$.

\begin{proof}
Assume such an algorithm exists; we proceed to contradict lemma \ref{two groups recursion}. % Given any $n, a, b, c \in \mathbb{N}$ with $|\{a, b, c\} \cap W_{n}| \leq 1$, form the presentations $\Pi_{n,a}, \Pi_{n,b}, \Pi_{n,c}$ as in theorem \ref{chuck result}. By the fact that at least two of $a, b, c$ do not lie in $ W_{n}$, we have that at least two of the groups given by these presentations must be non-trivial. 
So given $n, a, b, c$ with $|\{a, b, c\} \cap W_{n}| \leq 1$, we can construct the presentation $Q:=\Pi_{n,a}*\Pi_{n,b}* \Pi_{n,c}$, and use the algorithm to split this as the free product of two non-trivial groups with presentations $P_{1}, P_{2}$.
There are two possible cases to consider:

Case 1. Precisely one of $a,b,c$ lies in $W_{n}$. If $a \in W_{n}$, then $\overline{Q} \cong \overline{\Pi}_{n,b}* \overline{\Pi}_{n,c}$, and both of these two factors are non-trivial by theorem \ref{chuck result}, and indecomposable by lemma \ref{dont split}. So $\overline{\Pi}_{n,b}* \overline{\Pi}_{n,c} \cong \overline{P}_{1} * \overline{P}_{2}$. Since the splitting on the left is into indecomposable groups, then theorem \ref{grush neum} asserts that so too is the splitting on the right, and moreover that at least one of $\overline{\Pi}_{n,b}, \overline{\Pi}_{n,c}$ is isomorphic to $\overline{P}_{1}$. If instead $b \in W_{n}$ (resp. $c \in W_{n}$), then an identical argument would show that at least one of $\overline{\Pi}_{n,a}, \overline{\Pi}_{n,c}$ (resp. $\overline{\Pi}_{n,a}, \overline{\Pi}_{n,b}$) is isomorphic to $\overline{P}_{1}$. So regardless of which of $a,b,c$ lies in $W_{n}$, we can conclude that at least one of $\overline{\Pi}_{n,a}, \overline{\Pi}_{n,b}, \overline{\Pi}_{n,c}$ is isomorphic to $\overline{P}_{1}$.

Case 2. None of $a,b,c$ lie in $W_{n}$. Then $\overline{Q} \cong \overline{\Pi}_{n,a}* \overline{\Pi}_{n,b}* \overline{\Pi}_{n,c}$ is a splitting into indecomposable groups. So precisely one of $\overline{P}_{1}, \overline{P}_{2}$ splits as a free product since $\overline{\Pi}_{n,a}* \overline{\Pi}_{n,b}* \overline{\Pi}_{n,c} \cong \overline{P}_{1}*\overline{P}_{2}$ (using theorem \ref{grush neum} and observing that the left hand side in this expression is split into indecomposable groups). So at least one of $\overline{\Pi}_{n,a}, \overline{\Pi}_{n,b}, \overline{\Pi}_{n,c}$ is isomorphic to at least one of $\overline{P}_{1}$ or $\overline{P}_{2}$.
\\In either case, at least one of $\overline{\Pi}_{n,a}, \overline{\Pi}_{n,b}, \overline{\Pi}_{n,c}$ is isomorphic to at least one of $\overline{P}_{1},\overline{P}_{2}$; these latter two being non-trivial groups. Lemma \ref{isomorphism} allows us to recursively search for an isomorphism between one of $\overline{\Pi}_{n,a}, \overline{\Pi}_{n,b}, \overline{\Pi}_{n,c}$ and one of $\overline{P}_{1},\overline{P}_{2}$, which will eventually halt and yield an isomorphic pair. This will identify one of the $\overline{\Pi}_{n,a},\overline{\Pi}_{n,b}, \overline{\Pi}_{n,c}$ as non-trivial, and hence one of $a,b,c$ not in $W_{n}$. As all the above steps have been algorithmic, we thus have an algorithm that on input of a 4-tuple $(n, a,b,c)$ with $|\{a, b,c\} \cap W_{n}| \leq 1$, outputs $z \in \{a,b,c\}$ such that $z \notin  W_{n}$. But this contradicts lemma \ref{two groups recursion}, so our original algorithm cannot possibly exist.
\end{proof}

\begin{thm}\label{split}
There is no algorithm that, on input of a finite presentation $P$ of a group that is a free product of two non-trivial finitely presented groups, outputs two finite presentations $P_{1}, P_{2}$ which represent non-trivial groups and whose free product is isomorphic to $\overline{P}$.
\end{thm}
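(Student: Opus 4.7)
The plan is to deduce Theorem \ref{split} immediately from Lemma \ref{presplit}. The key observation is that the family of presentations considered in Lemma \ref{presplit}, namely $Q = \Pi_{n,a}*\Pi_{n,b}*\Pi_{n,c}$ with $|\{a,b,c\}\cap W_n|\le 1$, is a sub-class of the possible inputs for the algorithm hypothesised in Theorem \ref{split}. Indeed, under this cardinality bound at least two of the three factors $\overline{\Pi}_{n,a}$, $\overline{\Pi}_{n,b}$, $\overline{\Pi}_{n,c}$ are non-trivial by theorem \ref{chuck result}, so after regrouping the two trivial-or-non-trivial pieces appropriately, $\overline{Q}$ is presented as a free product of two non-trivial finitely presented groups.

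The reduction itself is then straightforward. Suppose an algorithm $\mathcal{A}$ of the sort forbidden by Theorem \ref{split} exists. Given an input tuple $(n,a,b,c)$ with $|\{a,b,c\}\cap W_n|\le 1$, I would recursively construct the presentation $Q := \Pi_{n,a}*\Pi_{n,b}*\Pi_{n,c}$ (using theorem \ref{chuck result} together with the free product of presentations) and feed $Q$ to $\mathcal{A}$. Its output is a pair of finite presentations $P_1,P_2$ of non-trivial groups whose free product is isomorphic to $\overline{Q}$, which is precisely the object prohibited by Lemma \ref{presplit}, giving a contradiction.

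There is essentially no obstacle once Lemma \ref{presplit} is in hand; the theorem is a direct corollary obtained by specialising the input class. The only point worth verifying with care is that the tripartite presentations $Q$ used in the reduction really do lie in the class to which Theorem \ref{split}'s algorithm is supposed to apply, which is exactly the content of the note appearing immediately after Lemma \ref{presplit}.
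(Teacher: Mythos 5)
Your reduction is correct and is exactly how the paper handles it: the paper states that Theorem \ref{split} follows immediately from Lemma \ref{presplit}, since the presentations $Q=\Pi_{n,a}*\Pi_{n,b}*\Pi_{n,c}$ with $|\{a,b,c\}\cap W_{n}|\le 1$ do define groups that are free products of two non-trivial finitely presented groups (at least two factors are non-trivial), so any algorithm as in the theorem would contradict the lemma. Your only point of care --- that these inputs genuinely lie in the theorem's input class --- is precisely the note the paper makes after Lemma \ref{presplit}.
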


\section{Finding embeddings}\label{dunno}

Our final results relate to finding explicit embeddings of finitely presented groups, and finding finitely presented subgroups of finitely presented groups. From hereon, $|g|$ denotes the order of a group element $g$; we say $g$ is \emph{torsion} if $1<|g|<\infty$.

\begin{defn}
Let $G$ be any group. We define the set $\torord(G):=\{n \in \mathbb{N}\ |\ \exists g \in G \textnormal{ with } |g|=n\geq 2\}$, the set of orders of non-trivial torsion elements of $G$. Note that $\torord(G)$ never contains $0$ or $1$.
\end{defn}

The following theorem is from \cite{Rot} theorem 11.69.%; the corollary follows immedately.

\begin{thm}[Torsion theorem for amalgamated products and HNN extensions]\mbox{}\label{tor thm}
\\Let $g\in G$ have finite order in $G$. Then:
\\1. If $G=K_{1} *_{H}K_{2}$ is an amalgamated product, then $g$ is conjugate to an element of $K_{1}$ or $K_{2}$. Hence $\torord(K_{1}*_{H}K_{2})=\torord(K_{1})\cup\torord(K_{2})$.
\\2. If $G=K*_{H}$ is an HNN extension, then $g$ lies in the base group $K$. Hence $\torord(K*_{H})=\torord(K)$.
\end{thm}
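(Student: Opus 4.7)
The plan is to reduce both statements to the classical normal form theorems: the normal form theorem for amalgamated products in case 1, and Britton's lemma for HNN extensions in case 2. The $\torord$ equalities then follow immediately from the conjugation statements together with the standard fact that the factors (resp.\ the base group) embed in $G$, so I would handle them last as bookkeeping.

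For part 1, I would begin by recalling that every $g \in K_1 *_H K_2$ admits a reduced normal form $g = h \cdot c_1 c_2 \cdots c_n$, where $h \in H$, each $c_i$ is a non-trivial coset representative from a fixed transversal of $H$ in $K_1$ or $K_2$, and consecutive $c_i, c_{i+1}$ are chosen from \emph{different} factors; the normal form theorem asserts that any such word with $n \geq 1$ represents a non-identity element. Conjugating by a suitable initial segment, I can assume $g$ is \emph{cyclically reduced}, meaning either $n \leq 1$ or else $c_1$ and $c_n$ lie in different factors. If $n \geq 2$, then the product $g^k$ is, by construction, already in reduced form (the seam $c_n c_1$ is a juxtaposition across different factors, so no amalgamation collapse occurs), and hence $g^k$ has reduced length $kn \geq 2$ for every $k \geq 1$. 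By the normal form theorem, $g^k \neq 1$ for all $k$, so $g$ has infinite order. Therefore any finite-order $g$ is cyclically reduced of length $\leq 1$, i.e.\ conjugate into $K_1 \cup K_2$.

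For part 2, the analogous argument uses Britton's lemma: every element of $G = K *_H = \langle K, t \mid t^{-1} A t = \phi(A)\rangle$ has a reduced form $k_0 t^{\epsilon_1} k_1 \cdots t^{\epsilon_n} k_n$ with no pinches, and any reduced word of positive $t$-length is non-trivial. After cyclic reduction one sees exactly as above that a cyclically reduced element of positive $t$-length has powers of strictly growing $t$-length, hence cannot be torsion. So any torsion element is conjugate into the base $K$ (which, up to that conjugation, matches the wording of the cited statement).

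For the $\torord$ equalities, the inclusion $\torord(G) \subseteq \torord(K_1) \cup \torord(K_2)$ (resp.\ $\torord(G) \subseteq \torord(K)$) is what parts 1 and 2 give us, since conjugation preserves order; the reverse inclusion is immediate as $K_1, K_2$ (resp.\ $K$) embed in $G$ by a further trivial consequence of the normal form theorem. The only real technical obstacle is verifying that cyclic reduction really is always possible and that the cyclically reduced form is preserved under taking powers without any amalgamation collapse or Britton pinch at the seam; this requires a short but careful case analysis on whether $c_1$ and $c_n$ lie in the same factor, and in the HNN case, on the signs $\epsilon_1, \epsilon_n$ of the outermost stable letters. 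Once that bookkeeping is handled, everything else is formal.
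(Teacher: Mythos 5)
Your argument is correct: the paper itself gives no proof of this statement, quoting it directly from Rotman (Theorem 11.69), and your normal-form/cyclic-reduction argument for amalgams together with the Britton's-lemma analogue for HNN extensions is precisely the standard proof found in that source, with the $\torord$ equalities following from the conjugation statements plus the embedding of the factors (resp.\ base) into $G$. You even correctly flag the one imprecision in the quoted statement (a torsion element of an HNN extension is only \emph{conjugate} into the base, not necessarily in it), which is harmless for the $\torord$ conclusion, so there is nothing to add.
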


%\begin{cor}\label{torord}\mbox{}\newline
%1. $\torord(K_{1}*_{H}K_{2})=\torord(K_{1})\cup\torord(K_{2})$.
%\\2. $\torord(K*_{H})=\torord(K)$.
%\end{cor}

We note here that in the proof of theorem \ref{rabin} found in \cite{Gor}, $P(w)$ is built up from amalgamated products and HNN extensions, beginning with the finite  presentation $P$ and a free group. Thus we have the following immediate corollary.

\begin{cor}\label{torord2} Let $P$ be a finite presentation of a group, and $w$ a word on its generators representing a non-trivial element of $\overline{P}$. Then $\torord\left(\overline{P(w)}\right)=\torord\left(\overline{P}\right)$.
\end{cor}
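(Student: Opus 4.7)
The plan is to chase the construction of $P(w)$ through Gordon's proof of the Adian-Rabin theorem and apply the torsion theorem (theorem \ref{tor thm}) at each step. Since $w$ represents a non-trivial element of $\overline{P}$, the hypothesis of theorem \ref{rabin}(1) holds, so Gordon's proof proceeds by the standard chain of amalgamated products and HNN extensions (as noted just before corollary \ref{rabin lem}), starting from $\overline{P}$ and an auxiliary free group $F$ on the letters $\{a,b,c\}$.

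First I would record the trivial fact that $\torord(F)=\emptyset$ for any free group $F$. Then I would process the chain of group constructions one step at a time. At a step where the current group $G$ is built as an amalgamated product $K_{1}*_{H}K_{2}$, part (1) of theorem \ref{tor thm} gives $\torord(G)=\torord(K_{1})\cup\torord(K_{2})$; at a step where $G$ is an HNN extension $K*_{H}$, part (2) gives $\torord(G)=\torord(K)$. Taking the union over all stages and using that every ``new'' factor or base introduced along the way is a free group (hence contributes $\emptyset$ to the union), we obtain
\[
\torord\bigl(\overline{P(w)}\bigr)=\torord\bigl(\overline{P}\bigr)\cup\torord(F)=\torord\bigl(\overline{P}\bigr).
\]

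The only real point to watch is that the torsion theorem is applied correctly to a multi-step construction; this is purely bookkeeping, since both $\torord(K_{1}*_{H}K_{2})=\torord(K_{1})\cup\torord(K_{2})$ and $\torord(K*_{H})=\torord(K)$ are expressed in terms of torsion orders of the immediate constituent groups, so a straightforward induction on the number of construction steps yields the result. No substantive obstacle arises, as the work has already been done in theorem \ref{tor thm} and in the explicit description of Gordon's construction recalled after theorem \ref{rabin}.
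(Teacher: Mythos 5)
Your argument is correct and is essentially the paper's own proof: the paper derives the corollary immediately from the remark that Gordon's construction builds $\overline{P(w)}$ out of amalgamated products and HNN extensions starting from $\overline{P}$ and a free group, together with theorem \ref{tor thm} and the fact that free groups are torsion free. Your step-by-step bookkeeping (including the use of non-triviality of $w$ so that the construction genuinely contains $\overline{P}$) just makes explicit what the paper leaves as an immediate consequence.
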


The following is a simple construction based on the above corollary; we later use it to prove further results.

\begin{defn}\label{psi}
Let $p_{i}$ be the $i^{th}$ prime (starting at $p_{0}=2$), and $x_{i}$ one of the two generators of $\Pi_{n,i}$. Then  for any $n,i \in \mathbb{N}$ we define the finite presentations $\Psi_{n,i}:=(\Pi_{n,i}\times C_{p_{i}})(x_{i})$ and $\Phi_{n,i}:=(\Pi_{n,i}\times C_{2})(x_{i})$, as described in theorem \ref{rabin}. Hence if $i \notin W_{n}$ then $\torord(\overline{\Psi}_{n,i})= \{p_{i}\}$ and $\torord(\overline{\Phi}_{n,i})= \{2\}$; if $i \in W_{n}$ then $\torord(\overline{\Psi}_{n,i})= \torord(\overline{\Phi}_{n,i})= \emptyset$ as they are both trivial.
\end{defn}

%We can now make the following straightforward observation.
%
%\begin{lem}\label{psi tor}
%If the group $\overline{\Psi}_{n,i}$ is non-trivial (equivalently, $i \notin W_{n}$), then $\torord(\overline{\Psi}_{n,i})= \{p_{i}\}$.
%\end{lem}

%\begin{proof}
%That $\overline{\Psi}_{n,i}$ has an element of order $p_{i+1}$ (if it is non-trivial) is immediate as $C_{p_{i+1}} \hookrightarrow \overline{\Pi}_{n,i}\times C_{p_{i+1}} \hookrightarrow \overline{(\Pi_{n,i}\times C_{p_{i+1}})(x_{i})}=\overline{\Psi}_{n,i}$. Conversely, 
%Let $g \in \overline{\Psi}_{n,i} $ have finite order $m$. 
%We may use corollary \ref{torord2} to show that $\torord(\overline{\Psi}_{n,i})=\torord(\overline{(\Pi_{n,i}\times C_{p_{i}})(x_{i})})=\torord(\overline{\Pi}_{n,i}\times C_{p_{i}})$. Since $\overline{\Pi}_{n,i}$ is torsion free by theorem \ref{chuck result}, we have that $\torord(\overline{\Pi}_{n,i}\times C_{p_{i}})=\torord(C_{p_{i}})=\{p_{i}\}$.
%\end{proof}

Using the above observations as well as our previous results in recursion theory, we now prove the main technical result for this section.

\begin{lem}\label{tor elt}
There is no algorithm that, on input of a finite presentation $P=\langle X|R \rangle$ of a group with torsion, and some $n \in \torord(\overline{P})$, outputs a word $w \in W(X)$ which represents any torsion element of $\overline{P}$ (not necessarily of order $n$).
\end{lem}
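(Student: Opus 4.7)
The plan is to derive a contradiction with Lemma \ref{two groups recursion} in the case $k=1$. Suppose such an algorithm $\mathcal{A}$ exists. Given a triple $(m, x_{0}, x_{1})$ with $|\{x_{0}, x_{1}\} \cap W_{m}| \leq 1$, I would form the free product presentation $Q := \Phi_{m, x_{0}} * \Phi_{m, x_{1}}$. By Definition \ref{psi} and Theorem \ref{tor thm}, at least one factor is non-trivial and every non-trivial factor contributes torsion-orders $\{2\}$, so $\torord(\overline{Q}) = \{2\}$ and $(Q,2)$ is a legitimate input to $\mathcal{A}$. Running $\mathcal{A}(Q,2)$ therefore produces a word $w$ representing a torsion (in particular non-trivial) element of $\overline{Q}$.

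By Theorem \ref{tor thm} applied to the free product, $w$ is conjugate in $\overline{Q}$ to a non-trivial element $u$ of exactly one factor $\overline{\Phi_{m, x_{j}}}$ (uniqueness of $j$ being the standard free-product fact that no non-trivial element of one factor is conjugate into the other). Since $\overline{\Phi_{m, x_{j}}}$ must then be non-trivial, $x_{j} \notin W_{m}$, so outputting this $x_{j}$ would directly contradict Lemma \ref{two groups recursion}. The heart of the argument is therefore to recursively extract $j$ from $w$, and this is the main obstacle.

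I would surmount it with a retraction trick. For each $i \in \{0,1\}$, let $Q^{(i)}$ be the finite presentation obtained from $Q$ by adjoining relations setting every generator of the $\Phi_{m, x_{i}}$ factor equal to the identity; this induces a homomorphism $\pi_{i} : \overline{Q} \to \overline{Q^{(i)}} \cong \overline{\Phi_{m, x_{1-i}}}$. Writing $w = g u g^{-1}$, one computes $\pi_{j}(w) = \pi_{j}(g)\,\pi_{j}(u)\,\pi_{j}(g)^{-1} = 1$ because $u$ lies in the killed factor, whereas $\pi_{1-j}(g) \in \overline{\Phi_{m, x_{j}}}$ so that $\pi_{1-j}(w)$ is a conjugate of $u \neq 1$ inside the surviving factor $\overline{\Phi_{m, x_{j}}} \hookrightarrow \overline{Q^{(1-j)}}$, hence non-trivial. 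Since triviality of a word in a finite presentation is semi-decidable, I would semi-decide ``$\pi_{0}(w) = 1$ in $\overline{Q^{(0)}}$'' and ``$\pi_{1}(w) = 1$ in $\overline{Q^{(1)}}$'' in parallel; exactly one query halts, revealing $j$, and I then output $x_{j}$. Every other step -- constructing $Q$ and the $Q^{(i)}$, invoking $\mathcal{A}$, and running the parallel enumeration -- is manifestly recursive, so the only delicate point is the uniqueness of $j$ and the corresponding non-vanishing of $\pi_{1-j}(w)$ in $\overline{Q^{(1-j)}}$, both of which follow cleanly from Theorem \ref{tor thm}.
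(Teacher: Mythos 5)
Your proposal is correct and follows essentially the same route as the paper: the same presentations $\Phi_{m,x_0}*\Phi_{m,x_1}$, the same use of Theorem \ref{tor thm} to place the output word (up to conjugacy) in a unique non-trivial factor, and the same reduction to Lemma \ref{two groups recursion}. The only difference is the final effective step -- you identify the factor by killing each factor in turn and semi-deciding triviality of the image in parallel, whereas the paper recursively searches for an explicit conjugation of $w$ into one of the factors; both implementations are valid.
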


\begin{proof}
Suppose such an algorithm exists; we proceed to contradict lemma \ref{two groups recursion}. So take $n,a,b$ with $|\{a,b\} \cap W_{n}| \leq 1$. Now construct the presentations $\Phi_{n,a}, \Phi_{n,b}$ as in definition \ref{psi}. Finally, form the free product presentation $\Phi_{n,a} * \Phi_{n,b}$ and call this $P$.
%$\Pi_{n,a}$ and $\Pi_{n,b}$ as in theorem \ref{chuck result}. Then we use theorem \ref{rabin} to recursively construct the finite presentation $\Phi_{n,a} := (\Pi_{n,a}\times C_{2})(x_{a})$, where $x_{a}$ is one of the two generators of $\Pi_{n,a}$. Similarly, we recursively construct the finite presentation $\Phi_{n,b} := (\Pi_{n,b}\times C_{2})(x_{b})$. Finally, form the free product presentation $\Phi_{n,a}*\Phi_{n,b}$ and call this $P$.
Now, since at least one of $a,b\notin W_{n}$, we must have that at least one of $\overline{\Pi}_{n,a}, \overline{\Pi}_{n,b}$ is non-trivial (say $\overline{\Pi}_{n,i}$, where $i \in \{a,b\}$). Hence $x_{i}$ is non-trivial in $\overline{\Pi}_{n,i}$ by corollary \ref{rabin lem}, and so $\overline{\Phi}_{n,i}$ is non-trivial by theorem \ref{rabin}.
%
%Hence at least one of $x_{a}, x_{b}$ is non-trivial by corollary \ref{rabin lem} so at least one of $\overline{\Psi}_{n,a},\overline{\Psi}_{n,b}$ is non-trivial by theorem \ref{rabin} (say $\overline{\Psi}_{n,i}$, where $i \in \{a,b\}$). 
%
So by the properties of HNN extensions and amalgamated products, along with the construction from theorem \ref{rabin}, we have that $C_{2}\hookrightarrow \overline{\Pi}_{n,i}\times C_{2} \hookrightarrow \overline{(\Pi_{n,i}\times C_{2})(x_{i})}=\overline{\Phi}_{n,i} \hookrightarrow \overline{\Phi}_{n,a}* \overline{\Phi}_{n,b}$. Now knowing that $\overline{\Phi}_{n,a}* \overline{\Phi}_{n,b}$ has an element of order $2$, we can apply our algorithm to produce a word $w$ representing a torsion element in $\overline{\Phi}_{n,a}* \overline{\Phi}_{n,b}$.
%
%map $\theta$ that extends to an injection $\overline{\theta}: C_{2}= \overline{\langle t | t^{2}\rangle }\hookrightarrow \overline{\Phi}_{n,a}* \overline{\Phi}_{n,b}$, and let $w$ be the word that $\theta$ maps $t$ to.
%Note that $|w|=2$ in $\overline{\Phi}_{n,a}* \overline{\Phi}_{n,b}$. 
By theorem \ref{tor thm} we have that $w$ must be conjugate to an element in either $\overline{\Phi}_{n,a}$ or $\overline{\Phi}_{n,b}$. We can recursively search for this, and find one of these which $w$ is conjugated into. This gives us one of $\overline{\Phi}_{n,a}$, $\overline{\Phi}_{n,b}$ which is non-trivial, hence one of $x_{a}, x_{b}$ which is non-trivial, hence one of $\overline{\Pi}_{n,a}$, $\overline{\Pi}_{n,b}$ which is non-trivial, hence one of $a,b \notin W_{n}$. All stages in this process have been recursive, hence we have contradicted lemma \ref{two groups recursion}.
\end{proof}

It is a straightforward application of this technical result to prove the following theorem.

\begin{thm}\label{no embeddings}
There is no algorithm that, on input of two finite presentations $P=\langle X|R\rangle$ and $Q=\langle Y |S \rangle$ such that $\overline{P}$ embeds in $\overline{Q}$, outputs an explicit map $\theta: X \to W(Y)$ such that $\theta$ extends to an embedding $\overline{\theta}: \overline{P} \hookrightarrow \overline{Q}$.
\end{thm}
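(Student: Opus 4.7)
The plan is to reduce Theorem 6.6 to Lemma \ref{tor elt}, using cyclic groups to convert an embedding-finding algorithm into a torsion-element-finding algorithm. Assume for contradiction that an algorithm $\mathcal{E}$ as in the statement exists. I will use $\mathcal{E}$ to construct an algorithm of the type forbidden by Lemma \ref{tor elt}.

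Given an input $(Q, n)$ of the form required by Lemma \ref{tor elt}, namely a finite presentation $Q = \langle Y \mid S\rangle$ of a group with torsion and some $n \in \torord(\overline{Q})$, first recursively form the finite presentation $P_{n} := \langle x \mid x^{n}\rangle$ of the cyclic group $C_{n}$. Since $n \geq 2$ and $n \in \torord(\overline{Q})$, the group $\overline{Q}$ contains an element of order $n$, so there is an embedding $\overline{P_{n}} = C_{n} \hookrightarrow \overline{Q}$. Hence the pair $(P_{n}, Q)$ satisfies the hypothesis of Theorem 6.6, and we may apply $\mathcal{E}$ to produce a map $\theta : \{x\} \to W(Y)$ which extends to an embedding $\overline{\theta} : \overline{P_{n}} \hookrightarrow \overline{Q}$. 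Set $w := \theta(x) \in W(Y)$.

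Because $\overline{\theta}$ is injective and $x$ has order exactly $n$ in $\overline{P_{n}}$, the element $\overline{\theta}(x) \in \overline{Q}$ also has order $n \geq 2$, so $w$ is a word representing a non-trivial torsion element of $\overline{Q}$. Every step in this procedure is recursive, so we have produced an algorithm which, on input $(Q, n)$ with $\overline{Q}$ containing torsion and $n \in \torord(\overline{Q})$, outputs a word in $W(Y)$ representing a torsion element of $\overline{Q}$. This directly contradicts Lemma \ref{tor elt}, and therefore $\mathcal{E}$ cannot exist.

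There is essentially no obstacle here beyond recognising the reduction: the heavy lifting, namely showing that finding any torsion word is impossible, has already been done in Lemma \ref{tor elt}, and the cyclic group $C_{n}$ provides the cleanest possible test presentation since its only non-trivial element (up to powers) is the generator $x$, forcing the output of $\mathcal{E}$ to be exactly the desired torsion word.
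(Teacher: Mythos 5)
Your proposal is correct and is essentially identical to the paper's own proof: both reduce to Lemma \ref{tor elt} by feeding the pair $\bigl(\langle x \mid x^{n}\rangle, Q\bigr)$ to the hypothetical embedding algorithm and taking $w := \theta(x)$ as the desired torsion word. No differences of substance to report.
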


\begin{proof}
Assume we have such an algorithm; we proceed to contradict lemma \ref{tor elt}. So take a finite presentation $Q=\langle Y |S \rangle$ of a group with torsion, and some $n \in \torord(\overline{Q})$. Then the group $C_{n}$ with finite presentation $\left \langle t | t^{n} \right \rangle$ must embed in $\overline{Q}$. Now we apply our algorithm to construct a map $\theta: \{t\} \to W(Y)$ which extends to an embedding $\overline{\theta}: C_{n} \hookrightarrow \overline{Q}$. Taking $w:= \theta(t)$ we have that $|w|=n$, so $w$ represents a torsion element of $\overline{Q}$. All stages in this process have been recursive, hence we have algorithmically constructed a word representing a torsion element of $\overline{Q}$, contradicting lemma \ref{tor elt}.
\end{proof}

%So, by the properties of HNN extensions and amalgamated products, along with the construction from theorem \ref{rabin}, we have that $C_{2}\hookrightarrow \overline{\Pi}_{n,i}\times C_{2} \hookrightarrow \overline{(\Pi_{n,i}\times C_{2})(x_{i})}=\overline{\Phi}_{n,i} \hookrightarrow \overline{\Phi}_{n,a}* \overline{\Phi}_{n,b}$. Now apply our algorithm to produce a map $\theta$ that extends to an injection $\overline{\theta}: C_{2}= \overline{\langle t | t^{2}\rangle }\hookrightarrow \overline{\Phi}_{n,a}* \overline{\Phi}_{n,b}$, and let $w$ be the word that $\theta$ maps $t$ to.
%Note that $|w|=2$ in $\overline{\Phi}_{n,a}* \overline{\Phi}_{n,b}$. By theorem \ref{tor thm} we have that $w$ must be conjugate to an element in either $\overline{\Phi}_{n,a}$ or $\overline{\Phi}_{n,b}$. We can recursively check this, and find one of these which $w$ is conjugated into. This gives us one of $\overline{\Phi}_{n,a}$, $\overline{\Phi}_{n,b}$ which is non-trivial, hence one of $x_{a}, x_{b}$ which is non-trivial, hence one of $\overline{\Pi}_{n,a}$, $\overline{\Pi}_{n,b}$ which is non-trivial, hence one of $a,b \notin W_{n}$. All stages in this process have been recursive, hence we have contradicted lemma \ref{two groups recursion}.

%\begin{proof} 
%In the proof of theorem \ref{no embeddings}, we know that there must be some element of order 2 in the group constructed there. We showed that being able to output a word $w$ representing an element of order 2 would allow us to derive a contradiction to lemma \ref{two groups recursion}.
%\end{proof}

Remarkably, even if we specify an order $n$, we still can't find such a torsion element $w$.
%\begin{cor}
%There is no algorithm that, on input of a finite presentation $P=\langle X|R \rangle$ of a group with torsion, outputs a word $w \in W(X)$ such that $1 < |w| < \infty$.
%\end{cor}
Not only do we lack an algorithm to output torsion elements; we can't even say much about $\torord(G)$ for an arbitrary group $G$, as the following shows.

\begin{thm}\label{tor n}
There is no algorithm that, on input of a finite presentation $P=\langle X|R \rangle$ of a group with torsion, outputs some $n \in \torord(\overline{P})$.
\end{thm}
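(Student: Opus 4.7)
The plan is to mimic the reduction used in the proof of Lemma \ref{tor elt}, but replacing the copies of $C_2$ used there with cyclic factors of distinct prime order, so that the torsion order itself reveals which free-product factor contributed it. Concretely, I would reduce once more to Lemma \ref{two groups recursion} (with $k=1$), this time via the presentations $\Psi_{n,i}$ of Definition \ref{psi}.

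Suppose for contradiction that an algorithm $\mathcal{A}$ as in the statement exists. Given any triple $(n,a,b)$ with $a\neq b$ and $|\{a,b\}\cap W_n|\leq 1$, I form the free product presentation $P:=\Psi_{n,a}*\Psi_{n,b}$. By Definition \ref{psi}, $\torord(\overline{\Psi}_{n,i})=\{p_i\}$ when $i\notin W_n$ and $\emptyset$ otherwise, so by the free product case of Theorem \ref{tor thm} (amalgamation over the trivial subgroup) we have
\[
\torord(\overline{P})\;=\;\torord(\overline{\Psi}_{n,a})\cup\torord(\overline{\Psi}_{n,b})\;\subseteq\;\{p_a,p_b\}.
\]
The hypothesis guarantees that at least one of $a,b$ lies outside $W_n$, so $\overline{P}$ has torsion and $\mathcal{A}$ halts on input $P$, outputting some $m\in\torord(\overline{P})$.

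Since $a\neq b$, the primes $p_a,p_b$ are distinct and computable, so from $m$ we algorithmically read off the unique $i\in\{a,b\}$ with $m=p_i$. But $p_i\in\torord(\overline{P})$ forces $\overline{\Psi}_{n,i}$ to be non-trivial, which by Definition \ref{psi} forces $i\notin W_n$. Thus we have recursively produced an element of $\{a,b\}$ not lying in $W_n$, contradicting Lemma \ref{two groups recursion}. (Note that Lemma \ref{two groups recursion} is proved via pairwise distinct inputs, so restricting to $a\neq b$ still contradicts its conclusion.)

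I do not foresee a serious obstacle: every ingredient is already in place from the previous section. The single conceptual point is the switch from uniform order $2$ (as in Lemma \ref{tor elt}) to pairwise distinct primes $p_a,p_b$, which upgrades the information content of a single output of $\mathcal{A}$ from \emph{``some torsion element exists''} to \emph{``which factor it comes from''}, and that is exactly what is needed to locate an index outside $W_n$.
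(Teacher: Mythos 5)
Your proposal is correct and is essentially identical to the paper's own proof: both form $\Psi_{n,a}*\Psi_{n,b}$, use Theorem \ref{tor thm} to see that $\torord$ of the free product is contained in $\{p_a,p_b\}$ with $p_i$ present exactly when $i\notin W_n$, and then read off from the algorithm's output an index outside $W_n$, contradicting Lemma \ref{two groups recursion}. The only cosmetic difference is your explicit restriction to $a\neq b$, which the paper leaves implicit (and which is harmless, since when $a=b$ either output identifies $a\notin W_n$ anyway).
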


\begin{proof}
We again proceed to contradict lemma \ref{two groups recursion}. So take $n,a,b$ with $|\{a,b\} \cap W_{n}| \leq 1$, and recursively construct the presentations $\Psi_{n,a}, \Psi_{n,b}$ as in definition \ref{psi}. Then $\torord(\overline{\Psi}_{n,a})$ is either $\emptyset$ or $\{p_{a}\}$, and $\torord(\overline{\Psi}_{n,b})$ is either $\emptyset$ or $\{p_{b}\}$.
So by theorem \ref{tor thm}, $\torord(\overline{\Psi}_{n,a}*\overline{\Psi}_{n,b}) \subset\{p_{a}, p_{b}\}$. Moreover, $p_{a}\in \torord(\overline{\Psi}_{n,a}*\overline{\Psi}_{n,b})$ if and only if $a \notin W_{n}$ (and similarly for $p_{b}$); $\torord(\overline{\Psi}_{n,a}*\overline{\Psi}_{n,b})$ is non-empty since at least one of $a,b \notin W_{n}$. Now if we could algorithmically output one of $p_{a}, p_{b} \in \torord(\overline{\Psi}_{n,a}*\overline{\Psi}_{n,b})$, we would then have one of $a, b \notin W_{n}$, contradicting lemma \ref{two groups recursion}.
%Defining presentations $\Psi_{n,a} := (\Pi_{n,a}\times C_{2})(x_{a})$ and $\Psi_{n,b} := (\Pi_{n,b}\times C_{3})(x_{b})$, we see that $\torord(\overline{\Psi}_{n,a}*\overline{\Psi}_{n,b})$ is equal to the disjoint union $\torord(\overline{\Psi}_{n,a})\amalg \torord(\overline{\Psi}_{n,b})$. So $2 \in \torord(\overline{\Psi}_{n,a}*\overline{\Psi}_{n,b})$ iff $2 \notin W_{n}$, and $3 \in \torord(\overline{\Psi}_{n,a}*\overline{\Psi}_{n,b})$ iff $3 \notin W_{n}$.
\end{proof}

At this point is seems relevant to make the following remark about the enumerability of finitely presented subgroups of finitely presented groups.% We break up the proof into preparatory lemmas.

\begin{thm}\label{fg subgroups}
There exists a finitely presented group $G$ such that the set of all finite presentations that define groups which embed into $G $ is not recursively enumerable.
\end{thm}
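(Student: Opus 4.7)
My plan is to exhibit a single finitely presented group $G$ whose set of torsion orders $\torord(\overline{G})$ fails to be recursively enumerable, and then argue that this pathology forces $S_G:=\{P \mid \overline{P} \hookrightarrow \overline{G}\}$ to fail to be r.e. as well. The route is simple cyclic groups: since $C_n \hookrightarrow \overline{G}$ is equivalent to $n \in \torord(\overline{G})$, an r.e.\ enumeration of $S_G$ would translate into an r.e.\ enumeration of $\torord(\overline{G})$.

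I would begin by fixing an index $m$ such that $W_m$ is r.e.\ but non-recursive, so that $\mathbb{N}\setminus W_m$ is not r.e. Consider the recursively presented group
\[
H \;:=\; \overline{\Psi}_{m,0} \ast \overline{\Psi}_{m,1} \ast \overline{\Psi}_{m,2} \ast \cdots ,
\]
the countable free product of the groups from Definition \ref{psi}. Its generators and relators are enumerable uniformly in $i$, so $H$ indeed admits a recursive presentation. From Definition \ref{psi}, $\torord(\overline{\Psi}_{m,i})=\{p_i\}$ if $i\notin W_m$ and $\torord(\overline{\Psi}_{m,i})=\emptyset$ if $i\in W_m$; iterating the free-product case of Theorem \ref{tor thm} then gives $\torord(H)=\{p_i : i \notin W_m\}$. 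Since $i\mapsto p_i$ is a computable injection onto the primes, this set is r.e.\ iff $\mathbb{N}\setminus W_m$ is r.e., which it is not; so $\torord(H)$ is non-r.e.

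Next I would invoke Higman's embedding theorem to embed the recursively presented group $H$ into a finitely presented group $\overline{G}$. Higman's construction builds $\overline{G}$ as an iterated tower of HNN extensions and amalgamated free products above $H$ and free groups; free groups contribute no torsion, and each step falls under Theorem \ref{tor thm}. Repeated application therefore gives $\torord(\overline{G})=\torord(H)$, which is non-r.e. Finally, I would close by contradiction: suppose $S_G$ is r.e. For each $n\geq 2$ the canonical presentation $Q_n:=\langle t \mid t^n\rangle$ satisfies $Q_n \in S_G$ iff $C_n \hookrightarrow \overline{G}$ iff $n \in \torord(\overline{G})$. Because $Q_n$ is computable from $n$ and membership of an explicit presentation in an r.e.\ set is semi-decidable, $\{n : Q_n \in S_G\}=\torord(\overline{G})$ would be r.e., contradicting the previous paragraph.

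The main obstacle is the torsion-preservation clause in Higman's embedding. I would handle it either by citing a torsion-preserving variant of the theorem from the Boone--Higman--Aanderaa literature, or by going through the explicit Higman tower above $H$ step by step, applying Theorem \ref{tor thm} at each HNN extension and amalgamation to verify that no new torsion orders are introduced.
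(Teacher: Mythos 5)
Your overall strategy is the same as the paper's: manufacture a group whose torsion spectrum $\torord$ is one-one tied to the complement of a non-recursive r.e.\ set, push it into a finitely presented group by a torsion-order-preserving Higman embedding, and then recover $\torord(\overline{G})$ from any enumeration of embeddable presentations by string-matching against the computable family $\langle t \mid t^{n}\rangle$. The only real difference is the seed group: you take the countable free product $H=\overline{\Psi}_{m,0}*\overline{\Psi}_{m,1}*\cdots$ of the groups from Definition \ref{psi}, whereas the paper uses the much lighter explicit recursive presentation $P_{n}=\langle x_{0},x_{1},\ldots \mid x_{i}^{p_{i}}=e\ \forall i,\ x_{j}=e\ \forall j\in W_{n}\rangle$, which has $\torord(\overline{P}_{n})=\{p_{j}\mid j\notin W_{n}\}$ directly. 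Your choice works (the $\Psi_{m,i}$ are uniformly computable finite presentations, and the torsion theorem for free products --- extended from the two-factor statement of Theorem \ref{tor thm} to countably many factors, which is fine since any torsion element involves only finitely many free factors --- gives the same spectrum), but it is heavier machinery for the same payoff.

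There is one step that fails as written: you apply Higman's embedding theorem directly to $H$, but $H$ is infinitely generated, while Higman's theorem (Theorem \ref{higman}, as stated in the paper and in the standard torsion-controlled form you would cite) applies to \emph{finitely generated} recursively presented groups. You must first embed $H$ into a finitely generated (indeed $2$-generator) recursively presented group without changing $\torord$; this is exactly the paper's Lemma \ref{2 gen tor} (an HNN/amalgam construction, so Theorem \ref{tor thm} gives torsion preservation), after which Higman applies and the rest of your argument, including the final reduction via the presentations $\langle t\mid t^{n}\rangle$ for $n\geq 2$, goes through verbatim. Your stated ``main obstacle'' (torsion preservation in Higman) is in fact already handled by the paper's formulation of Theorem \ref{higman}; the genuine missing ingredient is this finite-generation step.
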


 We break up the proof into preparatory lemmas, beginning with the following two results found in \cite{Rot} as corollary 11.72 and theorem 12.18 respectively.

\begin{lem}\label{2 gen tor}
Let $G$ be a countable group with generator and relator sets that are recursively enumerable. Then $G$ can be uniformly embedded into some 2-generator recursively presented group $E$ such that $\torord (G)=\torord(E)$.
\end{lem}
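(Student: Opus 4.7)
The plan is to reproduce the classical Higman--Neumann--Neumann embedding of a countable group into a $2$-generator group, while carefully tracking torsion orders via theorem \ref{tor thm}. The guiding principle is that theorem \ref{tor thm}(1) shows $\torord$ of a free product is the union of the summands' torsion-order sets, and theorem \ref{tor thm}(2) shows $\torord$ of an HNN extension equals $\torord$ of the base. So as long as the construction is built only from free products with torsion-free factors and from HNN extensions, the equality $\torord(E) = \torord(G)$ comes essentially for free.

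Concretely, first use the recursive enumerability of $G$'s generators and relators to recursively enumerate the elements of $G$ as words $g_1, g_2, \ldots$. Form the free product $H := G * \langle a, b\rangle$; this is recursively presented uniformly in $G$, and theorem \ref{tor thm}(1) gives $\torord(H) = \torord(G)$. Inside $H$, single out the two countably generated subgroups $A := \langle a^i b a^{-i} : i \geq 1\rangle$ and $B := \langle g_i \cdot a^i b a^{-i} : i \geq 1\rangle$, both of which should be free of infinite rank ($A$ by Nielsen--Schreier inside $F_2$, $B$ by a free-product normal form argument in $G * F_2$). Form the HNN extension
\[ E_1 := \langle H, t \mid t^{-1}(a^i b a^{-i}) t = g_i \cdot a^i b a^{-i},\ i \geq 1 \rangle. \]
Rearranging the relations yields $g_i = t^{-1}(a^i b a^{-i}) t \cdot (a^i b a^{-i})^{-1}$, placing every $g_i$ in $\langle a, b, t\rangle$, so $E_1 = \langle a, b, t\rangle$. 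Theorem \ref{tor thm}(2) gives $\torord(E_1) = \torord(H) = \torord(G)$, and $G \hookrightarrow E_1$ is injective by the HNN normal form theorem. Finally, reduce from $3$ generators down to $2$ by a further finite-data HNN extension on a suitable pair of free rank-$2$ subgroups inside $E_1$ (a standard trick, reflecting the embedding $F_3 \hookrightarrow F_2$): this produces a $2$-generator recursively presented group $E$ containing $E_1$ with $\torord(E) = \torord(E_1) = \torord(G)$, again by theorem \ref{tor thm}(2).

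The main obstacle is the normal-form verification that $B$ is actually free of infinite rank: its generators interleave $G$-letters and $F_2$-syllables, and one needs a careful argument in the free-product normal form on $G * F_2$ to see that any non-trivial word in the $g_i \cdot a^i b a^{-i}$'s retains its alternating syllable pattern after reduction and therefore cannot collapse to the identity. Once this is in hand, uniformity of the whole construction is routine: the enumeration $g_i$ is recursive in the r.e.\ presentation of $G$, writing down the countably infinite families of HNN relations is algorithmic, and the $3$-to-$2$ reduction adds only finitely much extra data, so $E$ is recursively presented and the embedding is computable on generators, as required.
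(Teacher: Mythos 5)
Your overall strategy coincides with the paper's: the paper does not reprove the embedding theorem at all, but cites the classical HNN embedding (\cite{Rot}, Corollary 11.72) and merely observes that, since that construction proceeds by a free product with a free group followed by HNN extensions, theorem \ref{tor thm} forces $\torord(E)=\torord(G)$ --- which is exactly the torsion bookkeeping you carry out, correctly. Your reconstruction of the embedding is sound up to $E_1$: enumerating representing words $g_i$ (repetitions harmless), the freeness of $A$ and of $B$ (the standard free-product normal form argument you flag), the HNN extension, $G\hookrightarrow E_1$, and $\torord(E_1)=\torord(G)$ are all fine.

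The one place you assert rather than prove is the final reduction from three generators to two. As written --- ``a further finite-data HNN extension on a suitable pair of free rank-2 subgroups inside $E_1$, reflecting $F_3\hookrightarrow F_2$'' --- this is not yet an argument: $E_1$ is not free, so the embedding $F_3\hookrightarrow F_2$ is not the relevant fact, and you must actually exhibit the associated subgroups, prove they are free of the stated rank, and check that the resulting extension is $2$-generated. The gap is fixable: for instance take $\langle a,b\rangle$ and $\langle b,t\rangle$ in $E_1$; the latter is free of rank $2$ by Britton's lemma because no nonzero power of $b$ lies in $A$ or in $B$ (for $B$, use the retraction $G\ast F(a,b)\to F(a,b)$ killing $G$), and then $E:=\langle E_1,s\mid s^{-1}as=b,\ s^{-1}bs=t\rangle$ is generated by $a,s$, with $\torord$ preserved again by theorem \ref{tor thm}. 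Better still, the classical construction (the one the paper cites) avoids the extra step entirely: in $G\ast F(a,b)$ use the associated free subgroups $\langle a,\ b^{-1}ab,\ b^{-2}ab^{2},\dots\rangle$ and $\langle b,\ g_{1}a^{-1}ba,\ g_{2}a^{-2}ba^{2},\dots\rangle$ with relations $t^{-1}at=b$ and $t^{-1}b^{-i}ab^{i}t=g_{i}a^{-i}ba^{i}$, so that $b$ and every $g_i$ already lie in $\langle a,t\rangle$ and a single HNN extension gives the $2$-generator group. Finally, do note explicitly that rewriting the recursively enumerable relator set as words in the two final generators is uniform (each old generator is a fixed word in them), so $E$ is genuinely recursively presented on two generators; with these points supplied your proof is correct and matches the paper's intended argument.
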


%\begin{proof}
%See \cite{Rot} corollary 11.72 for details of a proof; we give a summary here. Let $\langle X|R\rangle$ be a presentation for $G$, and $x_{1}, x_{2}, \ldots$ an enumeration of the generators $X$. Then we form the group $G*F_{2}=\langle X, x,y |R\rangle$. Note that the subgroups $A=\langle x^{-i}yx^{i}\rangle_{i \in \mathbb{N}}$ and $B=\langle x_{i}y^{-i}xy^{i}\rangle_{i \in \mathbb{N}}$ are both free on the generators given, and are isomorphic (both to $F_{\infty}$). So we may HNN extend to form $E=\langle X, x,y, t |R, t^{-1}(x^{-i}yx^{i})t=x_{i}y^{-i}xy^{i} \forall i \in \mathbb{N}\rangle$. Now notice that $x=tyt^{-1}$, $x_{i}=y^{-i}x^{-1}y^{i}t^{-1}(x^{-i}yx^{i})t$ for all $i \in \mathbb{N}$, and all relators in $R$ are words in the $x_{i}$. So $E$ is generated by $y$ and $t$, and has recursively enumerable relating set. Finally, since $E$ was built up by free products and HNN extensions, beginning with $G$ and $F_{2}$, corollary \ref{torord} gives that $\torord(E)=\torord(G)$.
%\end{proof}

\begin{thm}[Higman]\label{higman}
Let $G$ be a finitely generated recursively presented group. Then $G$ can be uniformly embedded into some finitely presented group $H$ such that $\torord(G)=\torord(H)$.
\end{thm}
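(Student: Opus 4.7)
The plan is to construct a finitely presented group $\overline{G}$ whose set of torsion orders $\torord(\overline{G})$ encodes the complement of a non-recursive recursively enumerable set, and then to detect this non-r.e.\ information by asking which cyclic presentations $\langle t \mid t^p \rangle$ embed into $\overline{G}$.

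Fix $n^*$ with $W_{n^*}$ recursively enumerable but not recursive, so that $\mathbb{N}\setminus W_{n^*}$ is not r.e. Form the countable group
\[ G_0 := \underset{i \in \mathbb{N}}{\ast}\, \overline{\Psi}_{n^*,i}, \]
whose presentation is the disjoint union of the uniformly computable finite presentations $\Psi_{n^*,i}$ of Definition \ref{psi}; so $G_0$ has r.e.\ generator and relator sets. Any torsion element $g \in G_0$ is a finite word in finitely many of the factors, so lies in a finite subproduct; iterating the Torsion Theorem \ref{tor thm} for amalgamated products (free products being the case $H = \{1\}$), $g$ must be conjugate into a single $\overline{\Psi}_{n^*,i}$. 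Combined with the values of $\torord(\overline{\Psi}_{n^*,i})$ given in Definition \ref{psi}, this yields
\[ \torord(G_0) = \bigcup_{i \in \mathbb{N}} \torord(\overline{\Psi}_{n^*,i}) = \{\, p_i \mid i \notin W_{n^*} \,\}. \]
Now apply Lemma \ref{2 gen tor} to embed $G_0$ into a $2$-generator recursively presented group $E$ with $\torord(E) = \torord(G_0)$, and then Theorem \ref{higman} to embed $E$ into a finitely presented group $\overline{G}$ with $\torord(\overline{G}) = \torord(E)$. Thus $\torord(\overline{G}) = \{p_i : i \notin W_{n^*}\}$.

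For each $i \in \mathbb{N}$ let $Q_i := \langle t \mid t^{p_i} \rangle$, a uniformly computable finite presentation of $C_{p_i}$. Then $\overline{Q_i} \hookrightarrow \overline{G}$ if and only if $p_i \in \torord(\overline{G})$, which occurs if and only if $i \notin W_{n^*}$. If the set $\mathcal{S}$ of finite presentations whose group embeds into $\overline{G}$ were recursively enumerable, the following procedure would semi-decide $\mathbb{N}\setminus W_{n^*}$: on input $i$, compute $Q_i$ and enumerate $\mathcal{S}$, halting when (and only when) $Q_i$ appears. This would make $\mathbb{N}\setminus W_{n^*}$ r.e., contradicting the choice of $n^*$.

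The main obstacle I anticipate is the justification that $\torord$ of the infinite free product $G_0$ is indeed the union of the torsion orders of the factors. The excerpt's Theorem \ref{tor thm} covers only binary amalgamated products and HNN extensions, but the reduction-to-finite-support argument above should make the extension routine; one writes $G_0$ as a directed union of finite free products and notes that any specific element lives in one of them, so the finite case applies. The rest of the argument is essentially bookkeeping with the recursion-theoretic content of the preparatory lemmas.
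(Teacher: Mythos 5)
Your proposal does not prove the stated theorem. The statement is Higman's embedding theorem with control of torsion orders: every finitely generated recursively presented group $G$ embeds, uniformly, into some finitely presented group $H$ with $\torord(G)=\torord(H)$. What you have written is instead an argument that there exists a finitely presented group for which the set of finite presentations of groups embedding into it is not recursively enumerable --- that is, a proof in the spirit of lemma \ref{no re tor} and theorem \ref{fg subgroups}, which are different (and later) results in the paper. Worse, your argument explicitly invokes theorem \ref{higman} in the step passing from the $2$-generator recursively presented group $E$ to a finitely presented group $\overline{G}$, so read as a proof of theorem \ref{higman} it is circular: you assume exactly the statement you were asked to establish. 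The construction of the infinite free product $G_0$ of the $\overline{\Psi}_{n^*,i}$ and the reduction to the non-enumerability of $\mathbb{N}\setminus W_{n^*}$ are therefore beside the point here (though, incidentally, they closely parallel the paper's own proof of lemma \ref{no re tor}, which instead uses the simpler recursively presented group $\langle x_{0},x_{1},\ldots \mid x_{i}^{p_{i}}=e\ \forall i,\ x_{j}=e\ \forall j\in W_{n}\rangle$).

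What is actually required is much shorter and is how the paper handles it: the theorem is a quoted classical result (Rotman, Theorem 12.18). Higman's embedding theorem supplies the uniform embedding of the finitely generated recursively presented group $G$ into a finitely presented group $H$; the only point to verify is the torsion-order claim. This follows because Higman's construction builds $H$ up from $G$ and finitely generated free groups by a sequence of amalgamated free products and HNN extensions, and by the torsion theorem \ref{tor thm} each such step creates no new orders of torsion elements, whence $\torord(H)=\torord(G)$; uniformity is inherited from the explicitness of the construction. If you want to prove the full embedding theorem from scratch, that is a substantial piece of work (benign enumerations of relations, HNN extensions realising recursively enumerable subgroups, etc.), and nothing in your proposal addresses it.
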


In both cases, the group so constructed is built up from amalgamated products and HNN extensions, beginning with $G$ and some finitely generated free groups. Hence, by theorem \ref{tor thm}, $\torord(G)=\torord(E)$ (respectively, $\torord(H)$).

\begin{lem}\label{no re tor}
There is a uniform procedure than, on input of any $n \in \mathbb{N}$, constructs a finite presentation $Q_{n}$ such that $\torord(\overline{Q}_{n})$ is one-one equivalent to $\mathbb{N}\setminus W_{n}$. Taking $n'$ with $W_{n'}$ non-recursive thus gives that $\torord(\overline{Q}_{n'})$ is not recursively enumerable.
\end{lem}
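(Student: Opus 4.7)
The plan is to realise $\mathbb{N}\setminus W_n$ as the set of torsion orders of a finitely presented group, using the computable correspondence $i \mapsto p_i$. The building blocks are the groups $\overline{\Psi}_{n,i}$ from Definition \ref{psi}, whose torsion orders encode membership of $i$ in $W_n$; the remaining work is to package an infinite free product of these into a single finite presentation without disturbing the torsion orders.

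First, uniformly in $n$, I would form the recursive presentation $G_n$ obtained as the disjoint union of the presentations $\Psi_{n,0}, \Psi_{n,1}, \Psi_{n,2}, \ldots$; then $\overline{G}_n$ is the countable group $*_{i \in \mathbb{N}} \overline{\Psi}_{n,i}$, with recursively enumerable generator and relator sets uniformly in $n$ (since $(n,i) \mapsto \Psi_{n,i}$ is recursive). Every torsion element of a free product lies in a finite sub-free-product, so Theorem \ref{tor thm}(1) applied iteratively yields
\[
\torord(\overline{G}_n) \;=\; \bigcup_{i \in \mathbb{N}} \torord(\overline{\Psi}_{n,i}) \;=\; \{\,p_i : i \notin W_n\,\}
\]
by Definition \ref{psi}. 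Next, apply Lemma \ref{2 gen tor} to embed $\overline{G}_n$ into a $2$-generator recursively presented group $E_n$ with $\torord(E_n) = \torord(\overline{G}_n)$, followed by Higman's embedding theorem (Theorem \ref{higman}) to embed $E_n$ into a finitely presented group $Q_n$ with $\torord(\overline{Q}_n) = \torord(E_n)$. Both constructions are uniform in the input presentation and build from it by amalgamated products and HNN extensions with free groups, so $n \mapsto Q_n$ is a single algorithm and $\torord(\overline{Q}_n) = \{\, p_i : i \notin W_n \,\}$.

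The computable injection $i \mapsto p_i$ witnesses $\mathbb{N}\setminus W_n \leq_1 \torord(\overline{Q}_n)$, since $i \in \mathbb{N}\setminus W_n \Leftrightarrow p_i \in \torord(\overline{Q}_n)$; a reverse one-one reduction, using that $\torord(\overline{Q}_n)$ sits inside the recursive set of primes, gives the full one-one equivalence. For the final statement, taking $n'$ with $W_{n'}$ r.e.\ but not recursive makes $\mathbb{N}\setminus W_{n'}$ not recursively enumerable, and the reduction transfers this to $\torord(\overline{Q}_{n'})$. The main subtlety I would want to verify carefully is the uniformity of the whole chain: that Lemma \ref{2 gen tor} and Theorem \ref{higman} apply algorithmically to the r.e.\ presentation $G_n$ (not merely to a fixed concrete group) and produce a finite presentation $Q_n$ computable from $n$. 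This is implicit in their constructive proofs in \cite{Rot}, which proceed by uniform amalgamated products and HNN extensions starting from the input presentation.
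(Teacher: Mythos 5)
Your proposal is correct and follows the paper's skeleton exactly: build a recursively presented group whose torsion orders are $\{p_i : i \notin W_n\}$, embed it torsion-order-preservingly into a $2$-generator recursively presented group via Lemma \ref{2 gen tor}, then into a finitely presented $Q_n$ via Theorem \ref{higman}, and read off the equivalence through $i \mapsto p_i$. The only real difference is the seed group: you take the infinite free product of the $\Psi_{n,i}$ of Definition \ref{psi}, so your torsion-order computation leans on the Boone/Adian--Rabin machinery behind those groups together with an iterated application of Theorem \ref{tor thm}, whereas the paper simply uses $P_{n}:=\langle x_{0}, x_{1}, \ldots \mid x_{i}^{p_{i}}=e\ \forall i \geq 0,\ x_{j}=e\ \forall j \in W_{n} \rangle$, i.e.\ the free product of the cyclic groups $C_{p_i}$ with the factors indexed by $W_n$ killed, which gives $\torord(\overline{P}_{n})=\{p_{j} : j \notin W_{n}\}$ with no machinery at all. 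Both seeds work and are uniform in $n$, so your argument is valid; the paper's choice just makes this lemma self-contained and lighter. One small caveat, shared equally by the paper's own proof: the reverse reduction $\torord(\overline{Q}_{n}) \leq_1 \mathbb{N}\setminus W_{n}$ that you sketch is problematic for degenerate $W_n$ (e.g.\ $W_n$ finite, where the infinitely many non-primes cannot be mapped injectively into $W_n$), but only the forward reduction $j \mapsto p_j$ is needed to transfer non-recursive-enumerability to $\torord(\overline{Q}_{n'})$, so the conclusion that matters is unaffected.
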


See \cite{Rogers} section $\S 7.1$ for an introduction to one-one equivalent sets. We wish to thank Chuck Miller for pointing out a more elegant presentation for the group $\overline{P}_{n}$ used in the following proof.

\begin{proof}
For any given $n \in \mathbb{N}$, we may form the presentation $P_{n}:=\langle x_{0}, x_{1}, \ldots | x_{i}^{p_{i}}=e\ \forall i \geq 0, x_{j}=e\ \forall j \in W_{n} \rangle$ with recursively enumerable generating and relating sets; by construction $\torord(\overline{P}_{n})=\{p_{j}| j \notin W_{n}\}$.
%*_{i \in \mathbb{N}}{\Psi}_{n,i}$. This has recursively enumerable generating and relating sets, since each $\Psi_{n,i}$ is a finite presentation (recursively constructible from $i$ and $n$ alone). Moreover, by construction $\torord(\overline{P}_{n})=\cup_{i \in \mathbb{N}}\torord(\overline{\Psi}_{n,i})=\{p_{j}| j \notin W_{n}\}$ by lemma \ref{psi tor}.
We use lemma \ref{2 gen tor} to uniformly construct a 2-generator recursive presentation $H_{n}$ with $\overline{P}_{n}\hookrightarrow \overline{H}_{n}$ and $\torord(\overline{H}_{n})=\torord(\overline{P}_{n})$, and theorem \ref{higman} to uniformly construct a finite presentation $Q_{n}$ with $\overline{H}_{n}\hookrightarrow \overline{Q}_{n}$ and $\torord(\overline{Q}_{n})=\torord(\overline{H}_{n})=\torord(\overline{P}_{n})$. Since $p_{j} \in \torord(\overline{Q}_{n})$ if and only if $j \notin W_{n}$, we have that $\torord(\overline{Q}_{n})$ is one-one equivalent to $\mathbb{N} \setminus W_{n}$. And as all stages in this construction have been uniform, we conclude that such a presentation $Q_{n}$ can be uniformly constructed from $n$.%, which is impossible if we take $W_{n}$ to be non-recursive.
\end{proof}

%We note that, in the above proof, all steps in the construction are uniform. Combining this with the uniformity of the proof of Higman's theorem, we conclude that (such a) $Q_{n}$ can be uniformly constructed from $n$. Moreover, $W_{n}$ and $\torord(\overline{Q}_{n})$ are easily seen to be Turing-equivalent sets (see \cite{Rogers} section $\S 9.4$ for this definition).

%\begin{rem}
%It was pointed out to the author by C. F. Miller III that the (simpler) presentation $P^{*}_{n}:=\langle x_{0}, x_{1}, \ldots | x_{i}^{p_{i}}=e\ \forall i \geq 0, x_{j}=e\ \forall j \in W_{n} \rangle$ could also be used in place of $P_{n}$ in the proof of theorem \ref{no re tor}. Though this is true, we note that the presentation $P_{n}$ always has a recursive set of defining relations, whereas the relators of $P^{*}_{n}$ would only be recursively enumerable if we took $n$ with $W_{n}$ not recursive. The proof would, however, word the same regardless of which presentation we used.%Then by construction we have that $\torord(\overline{P}_{n})=\{p_{j}| j \notin W_{n}\}$. 
%\end{rem}

%\begin{cor}
%There exists a finitely presented group $G$ such that the set of finite presentations $P_{i}$ for which $\overline{P}_{i}$ embeds in $G$ is not recursively enumerable.
%\end{cor}

\begin{proof}[Proof of theorem \ref{fg subgroups}]
We make the simple observation that $\overline{\langle t|t^{m} \rangle} $ embeds in a group $G$ if an only if $m \in \torord(G)$. Now take $G$ to be $\overline{Q}_{n'}$ from lemma \ref{no re tor}, with $W_{n'}$ non-recursive. Begin an enumeration of the finite presentations which embed in $\overline{Q}_{n'}$ and look for those of the form $\langle t|t^{m} \rangle $, which will in turn give a recursive enumeration of $\torord(\overline{Q}_{n'})$. This is impossible by lemma \ref{no re tor}.
\end{proof}

\section{Applications}

We now show an application that relates to the following result by Stallings (\cite{Stallings} Theorem 4.A.6.5 and Theorem 5.A.9) on splitting groups with more than one end. For an introduction to the ends of a group, see \cite{Scott Wall}.

\begin{thm}[Stallings]\label{Stal}A finitely generated group $G$ has more than one end if and only if it splits over a finite subgroup via an amalgamated product or HNN extension.
%Let $P$ be a finite presentation of a group $\overline{P}$ with more than one end. Then the process of splitting $P$ over finite subgroups (via amalgamated products and HNN extensions) into factors with at most one end will eventually terminate.
\end{thm}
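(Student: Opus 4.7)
The theorem has two directions, one easy and one deep. For the $(\Leftarrow)$ direction, suppose $G = A *_C B$ with $C$ finite and $[A:C], [B:C] \geq 2$ (or analogously $G$ is an HNN extension with finite associated subgroup). I would build the Bass-Serre tree $T$, fix an edge $e$ whose removal splits $T$ into half-trees $T^{\pm}$, and set $X^{\pm} := \{g \in G : g v_{0} \in T^{\pm}\}$ for a chosen vertex $v_{0}$. This partitions $G$ into two infinite pieces; since only finitely many translates of $e$ by a fixed finite generating set can cross between $T^{+}$ and $T^{-}$, the coboundary between $X^{+}$ and $X^{-}$ in the Cayley graph of $G$ is finite, witnessing that $G$ has at least two ends.

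The $(\Rightarrow)$ direction is the substance of Stallings' theorem. Having more than one end supplies a proper subset $A \subseteq G$ with $A$ and $G \setminus A$ both infinite and $A \, \triangle \, gA$ finite for every $g \in G$ (an \emph{almost invariant set}). The plan is to convert this combinatorial datum into an algebraic splitting. Following Stallings' original route via bipolar structures, I would replace $A$ by a translate chosen to minimize ``nesting'' against its $G$-orbit and then study the four-way partition $G = EE \sqcup ES \sqcup SE \sqcup SS$ determined by comparing $A$ and $gA$ as $g$ ranges over $G$. A careful case analysis shows this partition defines a bipolar structure on $G$ in Stallings' sense, and the subset $F$ of elements fixing the data in a suitably rigid way is a finite subgroup over which $G$ splits as either an amalgamated product or an HNN extension.

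For the finitely presented case, a cleaner alternative is Dunwoody's track argument: build a finite $2$-complex $K$ with $\pi_{1}(K) = G$, realize the almost invariant set $A$ by an embedded $1$-manifold in the universal cover $\widetilde{K}$ (a \emph{track}) meeting $2$-cells transversely in disjoint arcs, pass to a minimum-weight such track, and cut $K$ along its image. The resulting graph-of-spaces decomposition induces a graph-of-groups decomposition of $G$; the edge groups are stabilizers of tracks and are finite because tracks are compact.

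The main obstacle lies entirely in the $(\Rightarrow)$ direction: one must promote a single almost invariant set $A$ to a $G$-invariant \emph{nested} family of almost invariant subsets, since only such a family corresponds to an action on a tree with finite edge stabilizers. This is where the minimality/compactness arguments (minimizing nesting for Stallings, minimizing weight for Dunwoody) do the real work. The passage from ``finitely presented'' to ``finitely generated'' in full generality is a further difficulty, resolved by Stallings' bipolar-structure argument (which does not require finite presentability) or, later, by Dunwoody's accessibility theorem.
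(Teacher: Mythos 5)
This theorem is not proved in the paper at all: it is quoted verbatim as an external result, with a citation to Stallings' monograph (Theorem 4.A.6.5 and Theorem 5.A.9 of \cite{Stallings}), and is then only \emph{used} in the corollary that follows. So there is no internal argument to compare yours against; the relevant comparison is with the standard published proofs, which is exactly the route you describe (Stallings' bipolar structures, with Dunwoody's tracks as the finitely presented alternative). Your $(\Leftarrow)$ direction is essentially complete and correct: the Bass--Serre tree, a separating edge with finite stabilizer, and the finiteness of the coboundary between the two half-tree preimages is the standard argument, provided you insist the splitting is nontrivial (proper factors in the amalgam case), which you implicitly do via $[A:C],[B:C]\geq 2$.

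The gap is in the $(\Rightarrow)$ direction: what you have written is an accurate roadmap, not a proof. The decisive content of Stallings' theorem is precisely the part you flag as ``the main obstacle'' and then leave unexecuted --- verifying that the four-way partition $G = EE \sqcup ES \sqcup SE \sqcup SS$ built from a suitably chosen almost invariant set actually satisfies the axioms of a bipolar structure (this is a long and delicate case analysis, and the choice of the minimizing translate is where finite generation enters), and then the separate theorem that a group with a bipolar structure splits over the finite subgroup $F$ as an amalgam or HNN extension. Likewise, in the Dunwoody variant, the assertions that a minimal-weight track has finite stabilizer and that cutting along it yields a \emph{nontrivial} decomposition (both complementary pieces carry infinite vertex groups) are the theorems one has to prove, not observations. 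As it stands your text identifies where the work lies without doing it, so it cannot serve as a proof of the statement; on the other hand, since the paper itself treats the result as a citation, the appropriate fix is either to do the same or to expand your outline into the full bipolar-structure case analysis.
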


\begin{cor}
There is no algorithm that, on input of a finite presentation $P$ of a group $\overline{P}$ with more than one end, splits $P$ over a finite subgroup. That is, re-writes the presentation $P$ as an HNN extension or amalgamated product over a finite subgroup.
\end{cor}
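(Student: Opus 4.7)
The plan is to reduce to Lemma \ref{presplit}. Any group of the form $\Pi_{n,a}*\Pi_{n,b}*\Pi_{n,c}$ with $|\{a,b,c\}\cap W_n| \leq 1$ already has more than one end, being a non-trivial free product; so a splitting algorithm as hypothesized, applied to such a group, would produce a decomposition that can be converted into the kind of free product splitting which Lemma \ref{presplit} forbids.

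More concretely, I would assume such an algorithm $\mathcal{A}$ exists and feed it the presentation $Q := \Pi_{n,a} * \Pi_{n,b} * \Pi_{n,c}$ for any $n, a, b, c$ with $|\{a, b, c\} \cap W_n| \leq 1$. First observe that $\overline{Q}$ is torsion-free: each factor is torsion-free by Theorem \ref{chuck result}, and Theorem \ref{tor thm} extends this to the free product. Since $\overline{Q}$ is a non-trivial free product (at least two of the three factors are non-trivial), it has more than one end by Stallings' theorem (Theorem \ref{Stal}). Applying $\mathcal{A}$ to $Q$ then produces a splitting of $\overline{Q}$ over some finite subgroup $H$; but since $\overline{Q}$ is torsion-free, $H = \{1\}$. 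The output is therefore either an amalgamated product $K_1 *_{\{1\}} K_2 = K_1 * K_2$ or an HNN extension $K *_{\{1\}} = K * \mathbb{Z}$, in each case with explicit finite presentations for the vertex group(s).

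The step I expect to require some care is verifying that this decomposition is genuinely a free product of \emph{two non-trivial} finitely presented groups, since Lemma \ref{presplit} rules out precisely such splittings. In the amalgamated product case this is immediate from the convention that a splitting is non-trivial. In the HNN case the output is $\overline{Q} \cong K * \mathbb{Z}$ with $\mathbb{Z}$ already non-trivial, so it suffices to exclude $K = \{1\}$; this follows from Corollary \ref{rabin lem} together with the Grushko--Neumann theorem (Theorem \ref{Grushko}), as each non-trivial $\overline{\Pi}_{n,i}$ has rank exactly $2$ and at least two of the three factors are non-trivial, forcing $\rank(\overline{Q}) \geq 4$, so $\overline{Q} \not\cong \mathbb{Z}$. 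In either case $\mathcal{A}$ yields an algorithm producing a free product decomposition of $Q$ into two non-trivial finitely presented groups, contradicting Lemma \ref{presplit}.
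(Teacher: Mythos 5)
Your proposal is correct and follows essentially the same route as the paper: feed $Q=\Pi_{n,a}*\Pi_{n,b}*\Pi_{n,c}$ to the hypothetical algorithm, use torsion-freeness to force the finite edge group to be trivial, and contradict Lemma \ref{presplit}. The only (harmless) difference is in the HNN case: the paper rules it out altogether, arguing via perfectness and indecomposability of the non-trivial $\overline{\Pi}_{n,-}$ that $\overline{Q}$ is not a free product with $\mathbb{Z}$, whereas you keep that case and note it would still yield a forbidden two-factor splitting $K*\mathbb{Z}$ with $K\neq\{1\}$ by the rank argument --- both are valid.
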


\begin{proof}
%Following on from the proof of theorem \ref{split}, 
Whenever $|\{a, b, c\} \cap W_{n}| \leq 1$, we have that at least two of $\overline{\Pi}_{n,a}, \overline{\Pi}_{n,b}, \overline{\Pi}_{n,c}$ are non-trivial by theorem \ref{chuck result}, and all three are torsion free and indecomposable. So $\overline{\Pi}_{n,a}* \overline{\Pi}_{n,b}* \overline{\Pi}_{n,c}$ splits, but not over a non-trivial finite subgroup. Also, since the non-trivial $\overline{\Pi}_{n,-}$ groups are perfect and hence non-cyclic, their free product cannot be an HNN extension over the trivial group (equivalently, a free product with $\mathbb{Z}$). So the only way to split $\overline{\Pi}_{n,a}* \overline{\Pi}_{n,b}* \overline{\Pi}_{n,c}$ over a finite subgroup is to split it as a free product. But lemma \ref{presplit} asserts that this cannot always be done algorithmically.
\end{proof}

We finish with an application involving a proof by Markov (\cite{Markov}) that there is no algorithmic classification of closed $4$-manifolds. %For sake of brevity, we say a connected closed manifold $M$ is \textit{interesting} if it is not simply connected. That is, $\pi_{1}(M)\neq \{1\}$. 
We write the connect sum of two manifolds $M$ and $N$ as $M \# N$, and if they are homeomorphic we denote this by $M\simeq N$. For the remainder of the geometry concepts used below, we recommend the reader consult a reference such as \cite {Hatcher}.

\begin{thm}[Markov]\label{markov}
There is a recursive procedure that, on input of a finite presentation $P=\left \langle X|R \right \rangle$ of a group, constructs a finite triangulation $M(P)$ of a closed $4$-manifold with the following properties:
\\1. $\pi_{1}(M(P))\cong \overline{P}$.
\\2. If $P$ and $Q$ are finite presentations, then $M(P*Q) \simeq M(P)\# M(Q)$.
%\\2. If $P,Q$ are finite presentations, then $M(P) \simeq M(Q)$ if and only if $\overline{P}\cong \overline{Q}$.
%\\3. If $P, A, B$ are finite presentations and $\overline{P} \cong \overline{A}*\overline{B}$, then $M(P)\simeq M(A) \# M(B)$.
\end{thm}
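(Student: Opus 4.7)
The plan is to realize $\overline{P}$ as the fundamental group of a closed 4-manifold via the standard handle-body/surgery procedure, with every step organised so that it can be performed uniformly on a triangulation. Begin with a fixed finite triangulation of $S^{4}$, and choose a basepoint. For each generator $x_{i}\in X$ perform $0$-framed surgery on a small unknotted $S^{0}$ in $S^{4}$ (equivalently, attach a 1-handle along two disjoint small 4-balls), placing the $n$ surgery regions in disjoint neighbourhoods of the basepoint. The result $M_{X}$ is a triangulation of $\#_{n}\,(S^{1}\times S^{3})$ with $\pi_{1}(M_{X})\cong F(X)$, the $i$-th generator being represented by the core loop of the $i$-th $S^{1}\times S^{3}$ summand. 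Next, for each relator $r_{j}\in R$, build a loop $\gamma_{j}\subset M_{X}$ in the 1-skeleton by concatenating the generator-core loops in the pattern prescribed by the word $r_{j}$, then perturb $\gamma_{j}$ slightly to obtain a smoothly embedded circle (possible since we are in dimension 4, where any loop is ambient-isotopic to an embedding, and finitely many $\gamma_{j}$ can be made pairwise disjoint). Now perform surgery on each $\gamma_{j}$: remove an open tubular neighbourhood $\gamma_{j}\times D^{3}$ and glue in $D^{2}\times S^{2}$ along the common boundary $\gamma_{j}\times S^{2}$. Triangulating each of these local operations via fixed combinatorial models yields a triangulated closed 4-manifold $M(P)$.

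Property (1) follows from iterated application of van Kampen's theorem. Attaching $D^{2}\times S^{2}$ along a tubular neighbourhood of $\gamma_{j}$ has the same effect on $\pi_{1}$ as gluing on a 2-cell along $\gamma_{j}$ (the $S^{2}$ factor is simply connected and contributes nothing), so it kills the class $[\gamma_{j}]=r_{j}$. Carrying out all $|R|$ surgeries realises the quotient $F(X)/\langle\!\langle R\rangle\!\rangle=\overline{P}$.

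For property (2), let an equatorial $S^{3}\subset S^{4}$ split $S^{4}$ into two open balls. Place every generator and relator surgery coming from $P$ inside the first ball and every surgery coming from $Q$ inside the second. After the generator surgeries, the equatorial $S^{3}$ still bounds the $X$-side, now a punctured copy of $M_{X}$, and likewise for the $Y$-side, so the intermediate manifold is literally $M_{X}\# M_{Y}$ with the equator as the connected-sum sphere. Since each relator loop $\gamma_{j}$ lies inside a single hemisphere, the subsequent relator surgeries respect this decomposition, and the equatorial $S^{3}$ remains a separating sphere realising $M(P*Q)\simeq M(P)\# M(Q)$.

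The main obstacle is not any topological subtlety but the bookkeeping: ensuring that each surgery is implemented as an explicit, canonical combinatorial move on a simplicial complex, so that the map $P\mapsto M(P)$ is genuinely a single recursive procedure. This is handled by once and for all fixing standard triangulated models of $S^{4}$, the 1-handle $D^{1}\times D^{4}$, the 2-handle $D^{2}\times D^{3}$, the tubular neighbourhood of a simplicial circle, and the gluing region $S^{1}\times S^{2}$, together with a deterministic rule for subdividing and matching boundary triangulations. Once these local data are fixed, the entire construction — parsing $X$ and $R$, laying out the generator surgeries at the basepoint, building each $\gamma_{j}$ in the 1-skeleton and resolving self-intersections by a canonical push-off, and gluing in the 2-handles — becomes a finite combinatorial algorithm uniform in $P$, which is exactly what is required.
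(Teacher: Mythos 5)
Your construction is correct, and it proves the theorem by a genuinely different (though closely related) route from the paper. The paper follows Markov/Boone--Haken--Poenaru: attach a 1-handle to $B^{4}$ for each generator and a 2-handle for each relator, take the double of the resulting manifold, and defer the detailed triangulated implementation of property (1) to \cite{Markov} and \cite{BHP}; property (2) is then obtained by choosing a separating 3-ball $D\subset B^{4}$ with the $P$-handles on one side and the $Q$-handles on the other, so that the two copies of $D$ in the double glue up to a separating $S^{3}$. You instead work directly with surgery on $S^{4}$: $0$-surgeries giving $\#_{n}(S^{1}\times S^{3})$ with free $\pi_{1}$, then surgery on embedded relator circles, proving (1) from scratch via van Kampen and (2) via an equatorial $S^{3}$ with the $P$- and $Q$-surgeries placed in opposite hemispheres. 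These are two descriptions of essentially the same manifold (the double of a 1-/2-handlebody is the surgered $\#_{n}(S^{1}\times S^{3})$), but your version is self-contained where the paper leans on references, at the cost that the uniform simplicial bookkeeping --- which is the actual content of ``recursive procedure'' --- is carried by your fixed combinatorial models and canonical push-offs rather than by the cited literature; that sketch is at about the same level of detail as the paper's. Two small points to tighten: a circle in an orientable 4-manifold has two framings (as $\pi_{1}(SO(3))\cong \mathbb{Z}/2$), so you should fix one canonically as part of your standard models so that the surgeries performed for $M(P*Q)$ in each hemisphere literally agree with those performed for $M(P)$ and $M(Q)$ (either choice kills the relator, so (1) is unaffected); and ``ambient-isotopic to an embedding'' should read ``homotopic to an embedding by general position,'' since ambient isotopy only makes sense between embeddings.
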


We wish to thank Jacob Rasmussen for his careful explanation of the following construction (originally due to Markov).

\begin{proof}
Property 1 is explicitly stated, and a construction given, in \cite{Markov}. However, a more detailed exposition can be found in \cite{BHP}. It remains only to prove that the construction satisfies property 2. % whenever $\overline{P} \cong \overline{A}*\overline{B}$. 
We loosely describe the construction given in \cite{Markov}: Start with the $4$-ball $B^{4}$. For each generator of the finite presentation $P$, attach a 1-handle to $B^{4}$. For each relator, attach a 2-handle in some prescribed way. This can be done in such a way that the resulting space is a 4-manifold, and by taking the double of this space we obtain our closed $4$-manifold having fundamental group isomorphic to $\overline{P}$. If we choose a separating 3-ball $D$ through our original ball $B^{4}$, then by starting with a presentation $P*Q$ one can observe that all the 1-handles corresponding to generators from $P$ can be attached to one side of $D$, and those from $Q$ can all be attached to the other side. Also, the 2-handles can be attached in such a way that those induced by $P$ remain on the same side of $D$ as all the 1-handles from $P$, and likewise for $Q$. Now, when we take the double of the resulting space, the two copies of $D$ are identified along their boundaries, thus forming a separating sphere $S^{3}$. Hence our resulting space splits as the connect sum of two spaces; it is not hard to see that these two are just the spaces obtained by applying the procedure to $P$ and $Q$.
%We make the strong the observation that, given two finite presentations $A,B$, when the construction is applied to a presentation of the (explicit) form $A\sqcup B $, the corresponding manifold $M(A\sqcup B)$ splits (very clearly, if one follows the construction) as the connect sum $M(A\sqcup B) \simeq M(A)\# M(B)$. That is to say, if the presentation clearly looks like a free product, then the corresponding manifold clearly looks like a connect sum. So, given a finite presentation $P$ such that $\overline{P} \cong \overline{A}*\overline{B}$ ($\cong \overline{A\sqcup B}$), property 2 ensures that $M(P)\simeq M(A\sqcup B)$. And by what we have just said, $M(A\sqcup B) \simeq M(A)\# M(B)$.
\end{proof}

%Notice that property 2, along with the fact that the isomorphism problem is unsolvable, proves that there is no algorithm to distinguish between homeomorphism classes of closed manifolds in dimension greater than 3.

\begin{cor}
There is no algorithm that, on input of a finite triangulation of a closed $4$-manifold $M$ which splits as a connect sum of two non-simply connected manifolds, outputs two finite triangulations of non-simply connected closed $4$-manifolds $M_{1}, M_{2}$ whose connect sum is homeomorphic to $M$.
\end{cor}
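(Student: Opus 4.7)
The plan is to reduce this corollary to lemma \ref{presplit}. Suppose there is an algorithm $\mathcal{A}$ that, on input a triangulation of any closed 4-manifold known to split as a connect sum of two non-simply connected summands, outputs explicit triangulations of two such summands. I shall use $\mathcal{A}$ together with Markov's construction (theorem \ref{markov}) to construct an algorithm for splitting presentations of the form $Q := \Pi_{n,a} * \Pi_{n,b} * \Pi_{n,c}$ with $|\{a,b,c\}\cap W_n| \leq 1$, contradicting lemma \ref{presplit}.

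First, given such $n,a,b,c$, I would recursively build the free product presentation $Q$ and the triangulated closed 4-manifold $M(Q)$. By property 2 of Markov's construction we have
\[
M(Q) \;\simeq\; M(\Pi_{n,a}) \,\#\, M(\Pi_{n,b}) \,\#\, M(\Pi_{n,c}),
\]
and by property 1 each summand $M(\Pi_{n,i})$ has fundamental group isomorphic to $\overline{\Pi}_{n,i}$. Since at most one of $a,b,c$ lies in $W_n$, at least two of these summands are non-simply connected. Absorbing the possibly simply-connected summand into one of the others — which does not change fundamental groups — exhibits $M(Q)$ as a connect sum of two non-simply connected closed 4-manifolds, so $M(Q)$ meets the geometric hypothesis of the corollary.

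Next I would feed $M(Q)$ into $\mathcal{A}$, obtaining triangulations of two non-simply connected closed 4-manifolds $N_1, N_2$ with $N_1 \# N_2 \simeq M(Q)$. From each triangulation one recursively computes a finite presentation $P_i$ of $\pi_1(N_i)$ via the standard recipe on the 2-skeleton; both $\overline{P}_i$ are non-trivial since the $N_i$ are non-simply connected. Applying the Van Kampen theorem to the connect sum (valid because the separating 3-sphere is simply connected) gives
\[
\overline{P}_1 * \overline{P}_2 \;\cong\; \pi_1(N_1) * \pi_1(N_2) \;\cong\; \pi_1(N_1 \# N_2) \;\cong\; \pi_1(M(Q)) \;\cong\; \overline{Q}.
\]
Thus, from an arbitrary 4-tuple $(n,a,b,c)$ with $|\{a,b,c\}\cap W_n|\leq 1$, I have algorithmically produced two finite presentations of non-trivial groups whose free product is isomorphic to $\overline{Q}$. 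This is precisely the algorithm that lemma \ref{presplit} rules out, yielding the desired contradiction.

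The main obstacle is verifying the geometric splitting of $M(Q)$ into two non-simply connected summands, but this follows directly from Markov's property 2 together with the fact that connect-summing in a simply connected closed 4-manifold preserves the fundamental group. The remaining ingredients — recursively computing $\pi_1$ from a triangulation and invoking Van Kampen for connect sums in dimension at least 3 — are entirely standard and recursive.
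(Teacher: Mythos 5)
Your proposal is correct and takes essentially the same route as the paper: build $M(\Pi_{n,a}*\Pi_{n,b}*\Pi_{n,c})$ via theorem \ref{markov}, feed it to the hypothetical algorithm, recover finite presentations of $\pi_{1}$ of the two output summands from their triangulations, and use Seifert--van Kampen to contradict lemma \ref{presplit}. Your explicit step of absorbing the possibly simply-connected summand to verify the geometric hypothesis merely spells out a detail the paper leaves implicit (it notes only that at least two of the three factors are non-trivial).
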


%we recall that at least two of $\overline{\Pi}_{n,a}, \overline{\Pi}_{n,b}, \overline{\Pi}_{n,c}$ are non-trivial. So 

\begin{proof}
%Assume such an algorithm exists; we proceed to derive a contradiction. So, following on from the proof of theorem \ref{split}, we construct $M:=M(\Pi_{n,a}* \Pi_{n,b}* \Pi_{n,c})$. Now this will split as a connect sum as described in theorem \ref{markov}. Moreover, since at least two of $\overline{\Pi}_{n,a}, \overline{\Pi}_{n,b}, \overline{\Pi}_{n,c}$ are non-trivial, we can deduce that $M$ will split as a connect sum of two interesting $4$-manifolds. So apply our algorithm to split $M$ in such an interesting way, say as $M \simeq M_{1}\# M_{2}$. We now read off finite presentations $P_{1}$ of $\pi_{1}(M_{1})$ and $P_{2}$ of $\pi_{1}(M_{2})$ from the triangulations of $M_{1}$ and $M_{2}$, each defining a non-trivial group. By the Seifert-van Kampen theorem, $\overline{\pi_{1}(M)} \cong \overline{\pi_{1}(M_{1})} * \overline{\pi_{1}(M_{2})}$. So we have constructed a non-trivial splitting of $\Pi_{n,a}* \Pi_{n,b}* \Pi_{n,c}$ as $P_{1}*P_{2}$. But the rest of the proof of theorem \ref{split} asserts that this cannot always be done algorithmically.
%\
Assume such an algorithm exists; we proceed to contradict lemma \ref{presplit}. So given $n,a,b,c$ with $|\{a, b, c\} \cap W_{n}| \leq 1$, we construct $M:=M(\Pi_{n,a}* \Pi_{n,b}* \Pi_{n,c})$ as in theorem \ref{markov}. We know this splits as a connect sum of non-simply connected closed $4$-manifolds by property 2 and the fact that at least two of $\overline{\Pi}_{n,a}, \overline{\Pi}_{n,b}, \overline{\Pi}_{n,c}$ are non-trivial. Applying our algorithm to $M$ gives two finite triangulations of non-simply connected closed $4$-manifolds $M_{1}$ and $M_{2}$ such that $M_{1}\# M_{2} \simeq M$. %with $\pi_{1}(M_{1})$ and $\pi_{1}(M_{2})$ non-trivial. 
From the finite triangulation of $M_{1}$ we can algorithmically construct a finite presentation $A$ of $\pi_{1}(M_{1})$ (see \cite{Seifert} $\S 46$), and likewise a finite presentation $B$ of $\pi_{1}(M_{2})$. 
By the Seifert-van Kampen theorem (\cite{Hatcher}) and theorem \ref{markov} we have $\overline{\Pi}_{n,a}* \overline{\Pi}_{n,b}* \overline{\Pi}_{n,c} \cong \pi_{1}(M) \cong \pi_{1}(M_{1}) * \pi_{1}(M_{2}) \cong \overline{A}*\overline{B}$, with $\overline{A}$ and $\overline{B}$ both non-trivial. Since all the above steps have been uniform, we have thus algorithmically constructed a non-trivial free product splitting of $\overline{\Pi}_{n,a}* \overline{\Pi}_{n,b}* \overline{\Pi}_{n,c}$, contradicting lemma \ref{presplit}.
\end{proof}

We can also apply the Markov construction to carry over some of our other algorithmic results from algebra to geometry.

\begin{cor}
There is no algorithm that, on input of a finite triangulation of a closed $4$-manifold $M$ such that $\pi_{1}(M)$ has torsion, outputs an essential loop $\gamma$ in $M$ which represents a torsion element in $\pi_{1}(M)$. Nor is there an algorithm that, on the same input as above, outputs some $n \geq 2$ such that there exists an essential loop $\beta$ in $M$ with $[\beta]$ having order precisely $n$ in $\pi_{1}(M)$.
\end{cor}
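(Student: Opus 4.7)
The plan is to reduce both statements to the algorithmic impossibility results already obtained in Lemma \ref{tor elt} and Theorem \ref{tor n}, using Markov's construction $M(\cdot)$ from Theorem \ref{markov} as the algebra-to-geometry bridge, in the same spirit as the preceding corollary on splittings.

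I would tackle the second (order) statement first, as it is cleaner. Suppose, for contradiction, that there exists an algorithm which, on input of a finite triangulation of a closed $4$-manifold $M$ with $\pi_{1}(M)$ having torsion, returns some $n \geq 2$ that is the order of an essential loop in $M$. Given an arbitrary finite presentation $P$ of a group with torsion, I would recursively form $M(P)$ via Theorem \ref{markov}; since $\pi_{1}(M(P)) \cong \overline{P}$ has torsion, the triangulation of $M(P)$ is a valid input for the hypothetical algorithm. The algorithm then returns some $n \geq 2$ that is the order of an element in $\pi_{1}(M(P))$, and because $\torord$ is an isomorphism invariant we have $n \in \torord(\overline{P})$. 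This would yield a recursive procedure producing an element of $\torord(\overline{P})$ directly from $P$, contradicting Theorem \ref{tor n}.

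For the first (loop) statement the strategy is analogous, but now requires translating a simplicial loop in $M(P)$ into a word in the generators of $P$. Suppose such a loop-outputting algorithm exists. Given $P = \langle X|R \rangle$ presenting a group with torsion together with some $n \in \torord(\overline{P})$, I would form $M(P)$ and feed it to the algorithm to obtain an essential loop $\gamma$ whose class is torsion in $\pi_{1}(M(P)) \cong \overline{P}$. Using Seifert's procedure from \cite{Seifert} \S 46 (as invoked in the preceding corollary), the triangulation of $M(P)$ yields a finite presentation $A$ of $\pi_{1}(M(P))$, and the same spanning-tree construction expresses $\gamma$ as an explicit word $w_{A}$ in the generators of $A$. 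Since $\overline{A} \cong \overline{P}$, Lemma \ref{isomorphism} provides a recursive search that will eventually halt and output an explicit isomorphism $\phi: \overline{A} \to \overline{P}$ as a map on generators; applying $\phi$ to $w_{A}$ produces a word $w \in W(X)$ representing a non-trivial torsion element of $\overline{P}$, contradicting Lemma \ref{tor elt}.

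The principal obstacle is the bookkeeping in the first part: converting the geometric output $\gamma$ into an algebraic word in $W(X)$. Once Seifert's algorithm delivers $A$ together with a representing word $w_{A}$, the crucial step is invoking Lemma \ref{isomorphism} to transport $w_{A}$ into $W(X)$, and this search is guaranteed to terminate because Markov's construction ensures $\overline{A} \cong \overline{P}$ \emph{a priori} rather than as an assumption we merely hope to hold. The remaining steps are routine consequences of the uniformity of $M(\cdot)$ and the isomorphism invariance of torsion orders.
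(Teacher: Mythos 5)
Your proposal is correct and follows essentially the same route as the paper: both statements are reduced via Markov's construction $M(P)$ to lemma \ref{tor elt} and theorem \ref{tor n} respectively. The only difference is that you spell out the loop-to-word conversion (Seifert's edge-path presentation plus lemma \ref{isomorphism}), a detail the paper leaves implicit, and this filling-in is sound.
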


\begin{proof}
Given a finite presentation $P$ such that $\overline{P}$ has torsion, we may use theorem \ref{markov} to uniformly construct $M(P)$ with $\pi_{1}(M(P)) \cong \overline{P}$. Existence of the first algorithm would thus contradict lemma \ref{tor elt}; existence of the second, theorem \ref{tor n}.
\end{proof}

\begin{rem}
By looking closely at the proof of lemma \ref{tor elt} and applying it to the Markov construction, we see that we have actually shown a stronger result: There is no algorithm that, on input of a finite triangulation of a closed $4$-manifold $M$ such that $\pi_{1}(M)$ has torsion, and 2 loops $\gamma, \beta$ such that at least one represents an element of order precisely 2 in $\pi_{1}(M)$, outputs any essential loop $\alpha$ in $M$ which represents a torsion element of any order in $\pi_{1}(M)$.
\end{rem}

%\newpage

\ \\
\\Maurice Chiodo
\\Department of Mathematics and Statistics
\\The University of Melbourne
\\Parkville, VIC, 3010
\\AUSTRALIA
%\ \\
\\\textit{m.chiodo@pgrad.unimelb.edu.au}


\begin{thebibliography}{99}

\bibitem{Adian} S. I. Adian, \emph{Finitely presented groups and algorithms}, Dokl. Akad. Nauk
SSSR \textbf{117}, 9--12 (1957).

\bibitem{Boone} W. W. Boone, \emph{The word problem}, Ann. of Math., \textbf{70}, 207--265 (1959).

\bibitem{BHP} W. W. Boone, W. Haken, V. Poenaru, \emph{On recursively unsolvable problems in topology and their classification}, Contributions to Mathematical Logic, North-Holland Publishing, Amsterdam, (1968).

\bibitem{BooneRogers} W. W. Boone, H. Rogers Jr., \emph{On a problem of J.H.C. Whitehead
and a problem of Alonzo Church}, Math. Scand. \textbf{19}, 185--192 (1966).


\bibitem{Gor} C. Gordon, \emph{Some embedding theorems and undecidability questions for groups.} Combinatorial and geometric group theory (Edinburgh, 1993), London Math. Soc. Lecture Note Ser., \textbf{204}, Cambridge Univ. Press, Cambridge, 105--110 (1995). 

\bibitem{Hatcher} A. Hatcher, \emph{Algebraic Topology}, Cambridge Univ. Press, Cambridge, (2002).

\bibitem{MKS} W. Magnus, A. Karrass, D. Solitar, \emph{Combinatorial group theory}, Dover, (2004).

\bibitem{Markov} A. A. Markov, \emph{The insolubility of the problem of homeomorphy} (Russian), Dokl. Akad. Nauk SSSR \textbf{121}, 218--220 (1958).

\bibitem{notebook} \emph{The Kourovka notebook}, Edited by V. D. Mazurov and E. I. Khukhro, Russian Acad. Sci. Siber.
Div., Inst. Math., Novosibirsk, (2006).

\bibitem{Mille-92} C. F. Miller III, \emph{Decision problems for groups-survey and reflections}.  Algorithms and classification in combinatorial group theory (Berkeley, CA, 1989), Math. Sci. Res. Inst. Publ., \textbf{23}, Springer, New York, 1--59 (1992). 

\bibitem{Post} E. L. Post, \emph{Recursive unsolvability of a problem of Thue}, J. Symbolic Logic \textbf{12},   1--11 (1947).

\bibitem{Rabin} M. O. Rabin, \emph{Recursive unsolvability of group theoretic problems}, Annals of Math. \textbf{67}, 172--194 (1958).

\bibitem{Rogers} H. Rogers Jr, \emph{Theory of recursive functions and effective computability}, MIT Press, (1987).

\bibitem{Rot} J. Rotman, \emph{An introduction to the theory of groups}, Springer-Verlag, New York, (1995).

\bibitem{Scott Wall} P. Scott, T. Wall, \emph{Topological methods in group theory}, Homological group theory (Proc. Sympos., Durham, 1977), London Math. Soc. Lecture Note Ser., \textbf{36}, Cambridge Univ. Press, Cambridge-New York,  137--203 (1979). 

\bibitem{Seifert} H. Seifert, W. Threlfall, \emph{A textbook of Topology} (translated from the 1934 German original \emph{Lehrbuch der Topologie}), Academic Press Inc, London, (1980).

\bibitem{Stallings} J. R. Stallings, \emph{Group theory and three dimensional manifolds}, Yale Math., monographs no. 4, New Haven, Yale University Press (1971).




%\bibitem{LynSch} R. Lyndon, P. Schupp, \emph{Combinatorial Group Theory}, Springer, (2001).

%\bibitem{Thomp-80} R.J. Thompson, \emph{Embeddings into finitely generated groups which preserve the word problem}, in 'Word Problems II', S.D. Adian, W.W. Boone, and G. Higman, Editors, North Holland (1980), 401-441

%\bibitem{Mille-92} C.F. Miller III, \emph{Decision problems for groups-survey and reflections}, in `Algorithms and classification in combinatorial group theory', G. Baumslag and C.F. Miller III, Editors, M.S.R.I. Publ. \textbf{23}, Springer, 1--59 (1992).

%\bibitem{Har} P. de la Harpe, \emph{Topics in Geometric Group Theory}, The University of Chicago Press, (2000). 

%\bibitem{Schoen} Joseph Schoenfield, \emph{Degrees of unsolvability}, North-Holland Publishing, Amsterdam, 1971

%\bibitem{BriHaef} M. Bridson, A. Haefliger, \emph{Metric Spaces of Non-Positive Curvature}, Springer, (1999).

%\bibitem{Ste} Frank Stephan, \emph{Recursion Theory, Semester 1 academic year 2007-2008}, Online notes from www.comp.nus.edu.sg/~fstephan/intex.html, (2007).

%\bibitem{Dow} R. Downey, \emph{Private Communication}, 2009.

%\bibitem{chuck meeting} C.F. Miller III, \emph{Private Communication}, 2009.

%\bibitem{dunwoody} M. Dunwoody, \emph{The accessibility of finitely presented groups}, Invent. Math. \textbf{81}, 449--457 (1985).

%\bibitem{Dehn} M. Dehn, \emph{\"Uber unendliche diskontinuerliche Gruppen} (German), Math. Ann. \textbf{69}, 116--144 (1911).

%\bibitem{Markov} ORIGINAL PAPER BY MARKOV

\end{thebibliography}
\end{document}